\documentclass[12pt]{amsart}
\usepackage{amsmath}
\usepackage{amssymb}
\usepackage{amsthm}
\usepackage{amsfonts}
\usepackage{color}
\usepackage{hyperref}
\usepackage[margin=1.2in]{geometry}

\newtheorem{theorem}{Theorem}[section]
\newtheorem{lemma}[theorem]{Lemma}
\newtheorem{corollary}[theorem]{Corollary}
\newtheorem{proposition}[theorem]{Proposition}

\theoremstyle{definition}
\newtheorem{definition}[theorem]{Definition}

\theoremstyle{remark}
\newtheorem{remark}[theorem]{Remark}

\numberwithin{equation}{section}

\newcommand{\q}{\\&\quad}
\newcommand{\db}{\bar{\partial}}
\newcommand{\dd}{\partial}
\newcommand{\ov}{\overline}
\newcommand{\C}{\Omega}
\newcommand{\me}{\omega}
\newcommand{\p}{\varphi}
\newcommand{\n}{\nabla}
\newcommand{\nb}{\ov{\nabla}}
\newcommand{\dt}{\dd_{t}}
\newcommand{\D}{\Delta}
\newcommand{\Db}{\ov{\Delta}}
\newcommand{\heat}{(\dt-\D)}
\newcommand{\Ga}{\Gamma}
\newcommand{\tr}{\text{\rm tr}_{g}}

\newcommand{\ci}{\circ}
\newcommand{\Tb}{\overline{T}}
\newcommand{\ga}{\gamma}
\newcommand{\Ric}{\text{\rm Ric}}

\title[Derivative Estimates of Pluriclosed Flow]{Derivative Estimates of Pluriclosed Flow}
\author{Yanan Ye}
\address{}
\email{\href{mailto:yeyanan@outlook.com}{yeyanan@outlook.com}}
\date{}

\begin{document}
\maketitle

\begin{abstract}
We provide a derivative estimate for the pluriclosed flow, controlling higher order derivatives of Chern curvature and torsion using the Chern curvature.
Moreover, we derive an estimate for torsion tensor using Chern Ricci curvature in dimension two.
And in the Hermitian-symplectic case, we find a monotonic quantity and use it to prove that all Hermitian-symplectic solitons are K{\"a}hler Ricci solitons.
\end{abstract}

\tableofcontents

\section{Introduction}
The pluriclosed flow is a parabolic flow of Hermitian metrics that was introduced by Streets and Tian in \cite{MR2673720}.
Motivated by Perelman\rq{}s celebrated work on the Ricci flow \cite{2002math.....11159P,2003math......7245P,2003math......3109P}, they considered to classify complex manifolds, particularly the Enriques-Kodaira\rq{}s VII surfaces (e.g. see \cite{MR2030225}), using a geometric flow \cite{MR4181011}.
Using the Bismut Ricci form $\rho$, the pluriclosed flow is formulated as
\begin{align*}
\dt\me=-\rho^{1,1}(\me).
\end{align*}

In the context of the Ricci flow, higher order derivatives are controlled by the curvature tensor.
Compactness results are obtained when the curvature is bounded.
However, in complex geometry, both curvature and torsion of the Chern connection need to be considered.
Streets and Tian established a derivative estimate for the pluriclosed flow, controlling higher order derivatives of curvature and torsion using the Chern curvature $\C$, torsion $T$ and gradient of torsion $DT$ (as explored in \cite{MR2673720,MR2781927}).
Additionally, they proved that in dimension 2, if the Chern curvature is bounded, the pluriclosed flow can exist for a long time (see \cite{MR2673720}).
This paper generalizes their results and demonstrates that higher derivatives of curvature and torsion can be solely controlled by the Chern curvature.
More precisely, the following theorem is established.

\begin{theorem}\label{thm-BBS-curvature}
For a pluriclosed flow $g(t)$ on a compact complex manifold $(M^{2n},J)$, given positive constants $\tau>0$, $K\geq 1$ and integer $m\geq 1$, there exists a constant $C_{m}$ depending only on the dimension $n$ and $\max\{\tau,1\}$ such that if
\begin{align*}
|\C(x,t)|_{g(x,t)}\leq K
,\quad
x\in M  \ and\   t\in [0,\frac{\tau}{K^2}]
\end{align*}
then
\begin{align*}
|D^m\C(x,t)|_{g(x,t)}\leq \frac{C_m K}{t^{(m+1)/2}}
,\quad
|D^{m}T(x,t)|_{g(x,t)}\leq \frac{C_{m}K}{t^{m/2}}
,\quad
x\in M \ and\ t\in [0,\frac{\tau}{K^2}].
\end{align*}
\end{theorem}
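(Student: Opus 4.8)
The plan is to prove Theorem~\ref{thm-BBS-curvature} by a Bando--Bernstein--Shi type argument. Since $M$ is compact, the parabolic maximum principle may be applied globally to space--time quantities assembled from $D^{m}\C$ and $D^{m}T$, weighted by powers of $t$ matched to the expected blow-up rate as $t\to 0$. The two families of estimates carry different weights: the conclusion $|D^{m}\C|\le C_{m}K\,t^{-(m+1)/2}$ amounts to bounding $t^{m+1}|D^{m}\C|^{2}$, whereas $|D^{m}T|\le C_{m}K\,t^{-m/2}$ amounts to bounding $t^{m}|D^{m}T|^{2}$. This half-power discrepancy reflects the fact that under parabolic scaling the torsion behaves like the square root of the curvature, and it is also what forces the restriction $m\ge 1$ in the torsion estimate. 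Because $K\ge 1$ and $t\le \tau/K^{2}$ we have $K\le\sqrt{\tau}\,t^{-1/2}$, so the various powers of $K$ produced along the way can be traded against powers of $t$ and absorbed into $C_{m}$, which is exactly why $C_{m}$ is permitted to depend on $\tau$.

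The decisive and genuinely new step, where this estimate departs from the one of Streets--Tian whose hypotheses also control $T$ and $DT$, is a zeroth order torsion bound deduced from the curvature bound alone. I would obtain it from the reaction--diffusion identity
\begin{align*}
\heat|T|^{2} = -|\n T|^{2}-|\nb T|^{2}+\mathfrak{R},
\end{align*}
where the reaction $\mathfrak{R}$ is schematically a curvature--torsion coupling $\C* T*\Tb$ together with a quartic self-interaction $T*T*\Tb*\Tb$. The coupling term is immediately dominated by $|\C|\le K$, but the quartic term is the main obstacle: it is of the same order in $T$ as the quantity being estimated, so the naive maximum principle fails, and, crucially, no bound on $T$ is assumed at $t=0$. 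The resolution must exploit the structure special to the pluriclosed flow: I expect to use the Chern first Bianchi identity together with the pluriclosed condition $\dd\db\me=0$ (which makes the torsion form $\db$-closed) either to exhibit a favorable sign of the quartic term outright, or to absorb it into the good gradient terms $-|\n T|^{2}-|\nb T|^{2}$, in either case producing an effective inequality of Riccati type $\heat|T|^{2}\le -c\,|T|^{4}+C K\,|T|^{2}$. The favorable sign of $-c|T|^{4}$ then yields, by maximum-principle comparison with the ODE $\dot u=-cu^{2}+CKu$, the smoothing bound $|T|^{2}\le C(K+t^{-1})$ independently of the initial torsion. Verifying the sign and carrying out this absorption is the hard part of the whole argument.

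With $|\C|$ and $|T|$ under control I would treat the case $m=1$ by a weighted test function with lower order ballast. For the curvature one computes
\begin{align*}
\heat\bigl(t^{2}|D\C|^{2}+t|\C|^{2}\bigr)=|\C|^{2}-2t^{2}|D^{2}\C|^{2}+(\text{reaction}),
\end{align*}
the positive term $2t|D\C|^{2}$ from differentiating $t^{2}$ being cancelled by the good gradient term $-2t|D\C|^{2}$ supplied by the ballast $t|\C|^{2}$; likewise $t\,|DT|^{2}+|T|^{2}$ produces the good term $-|DT|^{2}$. Adding the torsion function to the curvature function with a large coefficient absorbs the cross terms of type $D\C*\n T$, and the remaining reaction terms are products in which every factor is already bounded by $|\C|\le K$ and $|T|^{2}\le C(K+t^{-1})$, the $t^{-1}$ being carried harmlessly by the explicit $t$-weights. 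The maximum principle then gives $t^{2}|D\C|^{2}\le C$ and $t\,|DT|^{2}\le C$, i.e.\ the case $m=1$.

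The general case follows by induction on $m$, applying the maximum principle to a single polynomial
\begin{align*}
\Phi_{m}=\sum_{j=0}^{m}a_{j}\,t^{j+1}|D^{j}\C|^{2}+\sum_{j=0}^{m}b_{j}\,t^{j}|D^{j}T|^{2},
\end{align*}
whose zeroth order terms are controlled by the curvature hypothesis and the torsion bound above. By the commutation and evolution formulas for the pluriclosed flow, the reaction terms in $\heat\Phi_{m}$ are sums of products $D^{a}(\C\text{ or }T)* D^{b}(\C\text{ or }T)*\cdots$ with balanced total derivative count; every factor of intermediate order is dominated by the inductive hypothesis, leaving at most one top-order factor, which is absorbed by the good gradient terms $-t^{j+1}|D^{j+1}\C|^{2}$ and $-t^{j}|D^{j+1}T|^{2}$ furnished by the ballast one level down. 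Choosing the coefficients $a_{j},b_{j}$ successively large enough to carry out each absorption, with the weight offset between the two families exactly accommodating the curvature--torsion cross terms, forces $\heat\Phi_{m}\le C$ at an interior space--time maximum, whence $\Phi_{m}\le C$ and the stated bounds follow. The only essential difficulty beyond bookkeeping remains the zeroth order torsion estimate described above.
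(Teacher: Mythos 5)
Your proposal correctly identifies the decisive new ingredient --- a zeroth-order bound on torsion coming from the curvature bound alone --- but it does not prove it, and the route you sketch is unlikely to close as stated. In general dimension the reaction in $\heat|T|^2$ consists of the coupling $\C*T*\Tb$, two distinct quartic terms (schematically $4T_{ib\bar{c}}T_{ja\bar{b}}T_{\bar{a}\bar{k}c}T_{\bar{i}\bar{j}k}$ and $(T,Q[T])$), and the cubic gradient coupling $\n T*T*\Tb$. Only $(T,Q[T])\leq 0$ has a sign; the other quartic term has none, and estimating the cubic coupling by Cauchy--Schwarz costs $\tfrac{1}{2}|\n T|^2+\tfrac{1}{2}|T|^4$, which consumes exactly the good gradient term and produces a quartic of the wrong sign. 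The paper obtains a clean $-|T|^4$ structure only in complex dimension two (Lemma \ref{lem-torsionDim2}), via surface-specific computations, and uses it for the separate Theorem \ref{thm-HSdim2-T2}, not for this theorem. So what you call ``the hard part'' is, in your formulation, an open step, and for $n>2$ there is no evidence that the Riccati inequality you need is even true.

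The actual resolution is a one-line algebraic observation that your proposal misses: contracting the pluriclosed identity of Lemma \ref{lem-PCcondition} with $g^{\bar{j}i}g^{\bar{q}p}$ gives $|T|^2$ on the right and a sum of four double traces of curvature on the left, hence $|T|^2\leq c(n)|\C|\leq c(n)K$ pointwise at every time --- no maximum principle, no smoothing, and no hypothesis on the initial torsion (so your concern that $T$ is uncontrolled at $t=0$ is moot: the curvature bound at $t=0$ already controls it). With this substitute for your proposed smoothing estimate, the remainder of your argument is essentially the paper's: the same weighted Bando--Bernstein--Shi induction with ballast, where the paper's $F_m=t^{m+1}(|D^m\C|^2+|\n^{m+1}T|^2)+\ga_m F_{m-1}$ matches your $\Phi_m$ up to one technical choice --- the paper tracks the pure derivatives $\n^{m+1}T$ rather than $D^{m+1}T$ in the induction, recovering $D^{m+1}T$ afterwards from Lemma \ref{lem-torsion-totalhigherDerivativeAndPartial}, since $\nb T$ is itself curvature by the first Bianchi identity. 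Moreover, with the uniform bound $|T|^2\leq cK$ the torsion factors carry no $t^{-1}$, so the bookkeeping in your absorption scheme simplifies rather than complicates. In short: replace your unproven Riccati step by the trace of the pluriclosed condition, and your induction can be completed along the lines you describe.
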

As an application, we obtain
\begin{theorem}\label{thm-maximalExistenceTime}
If the maximal existence time $\tau$ of pluriclosed flow $g(t)$ is finite, then we have
\begin{align*}
\lim_{t\to\tau-}\left(
	\max_{x\in M}|\C(x,t)|_{g(x,t)}
	\right)=+\infty.
\end{align*}
\end{theorem}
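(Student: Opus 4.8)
The plan is to argue by contradiction, combining the smoothing estimate of Theorem \ref{thm-BBS-curvature} with a restarting argument. Suppose the conclusion fails. Then there is a constant $K\geq 1$ and a sequence $t_j\to\tau^-$ with $\max_{x\in M}|\C(\cdot,t_j)|_g\leq K$. Since $\tau<\infty$, I would first upgrade this subsequential bound to a bound on a full terminal interval: from the evolution equation for $|\C|^2$ one extracts a short-time estimate showing that $|\C|$ stays comparable to its value at $t_j$ for a definite amount of time; as $t_j\to\tau$, these intervals eventually reach $\tau$, so after relabelling we may assume $\sup_{M\times[t^*,\tau)}|\C|_g\leq 2K$ for some $t^*<\tau$. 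The goal is then to show that $g(t)$ extends smoothly past $\tau$, contradicting maximality.

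The first substantive step is to bound the torsion itself. Theorem \ref{thm-BBS-curvature}, applied to the time-translated flow $g(t^*+\cdot)$ and with the initial time shifted forward, gives uniform bounds on $|D^m\C|$ and $|D^mT|$ for every $m\geq 1$ on each $[t^*+\varepsilon,\tau)$; in particular $|DT|\leq C$ there. However, Theorem \ref{thm-BBS-curvature} supplies no bound on $|T|$ itself, so the zeroth-order torsion must be controlled separately. Writing out $\heat|T|^2$, the reaction terms are at worst of the form $\C\ast T\ast T$ and $T\ast T\ast DT$; using $|\C|_g\leq 2K$ together with the now-available bound on $|DT|$, each is dominated by $C(|T|^2+1)$. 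A maximum-principle and Gr\"onwall argument on $\max_M|T|^2$ over the finite interval $[t^*,\tau)$ then yields $\sup_{M\times[t^*,\tau)}|T|_g\leq C$.

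With both $|\C|$ and $|T|$ bounded I would close the estimates. The pluriclosed flow $\dt\me=-\rho^{1,1}$ has right-hand side controlled by the Chern curvature and a term quadratic in the torsion, so $|\dt g|_g\leq C$; integrating gives the uniform equivalence $e^{-C(t-t^*)}g(t^*)\leq g(t)\leq e^{C(t-t^*)}g(t^*)$, finite because $\tau<\infty$. Combining this equivalence with the uniform bounds on all $|D^m\C|$ and $|D^mT|$ converts, by the standard procedure, into uniform bounds on every coordinate derivative of $g_{i\bar j}$ in a fixed atlas. Since moreover $|\dt g|$ is bounded, $g(t)$ is Cauchy in $C^\infty$ as $t\to\tau^-$ and converges to a smooth Hermitian metric $g(\tau)$; the condition $\dd\db\me=0$ passes to the limit, so $g(\tau)$ is again pluriclosed. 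Finally, short-time existence of the pluriclosed flow from $g(\tau)$ produces a solution on $[\tau,\tau+\delta)$ which, by uniqueness, glues with $g(t)$ and extends the flow beyond $\tau$. This contradicts the maximality of $\tau$, and therefore $\max_{x\in M}|\C(\cdot,t)|_g\to+\infty$ as $t\to\tau^-$.

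The main obstacle is the coupling between curvature and torsion. Because $\dt g$ depends quadratically on $T$ while Theorem \ref{thm-BBS-curvature} provides no zeroth-order torsion bound, the whole scheme closes only once $|T|$ is controlled, which forces the separate maximum-principle estimate above; the delicate point there is to absorb the cubic torsion term $T\ast T\ast DT$ in $\heat|T|^2$, for which the bound on $|DT|$ coming from Theorem \ref{thm-BBS-curvature} is exactly what is needed. The same coupling resurfaces in the short-time estimate used to pass from a subsequence to a genuine limit, and that is where most of the technical care is required.
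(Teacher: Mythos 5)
Your overall skeleton --- argue by contradiction, upgrade the subsequential curvature bound to a bound on a terminal interval via a doubling-time estimate, invoke Theorem \ref{thm-BBS-curvature} on the time-shifted flow to get all higher derivative bounds, deduce metric equivalence and $C^\infty$ convergence to a pluriclosed limit metric, and restart the flow --- is exactly the ``standard argument'' that the paper compresses into two lines, and you deserve credit for addressing the subsequence-versus-interval point that the paper glosses over. However, there is a genuine gap in your treatment of the zeroth-order torsion. You assert that the reaction terms of $\heat|T|^2$ are at worst of the form $\C*T*T$ and $T*T*DT$, hence dominated by $C(|T|^2+1)$ once $|\C|$ and $|DT|$ are bounded. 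This mischaracterizes the evolution equation: by Lemma \ref{lem-EvTorsion} one has $\heat T=\C*T+\n T*\Tb+T*T*\Tb$, and by Lemma \ref{lem-ev-norm} there is in addition the term $(T,Q[T])$ with $Q$ quadratic in $T$; pairing with $T$ therefore produces genuinely quartic terms of type $T*T*\Tb*\Tb$, i.e.\ of size $|T|^4$. These are not bounded by $C(|T|^2+1)$, and your maximum-principle/Gr\"onwall argument for $\phi(t)=\max_M|T|^2$ then only yields $\phi'\leq C\phi+C\phi^2$, which controls $\phi$ for a short time depending on $\phi(t^*)$ but need not close on all of $[t^*,\tau)$. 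So this step, as written, fails.

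The repair is immediate and is the observation you missed: the pluriclosed condition itself bounds the torsion by the curvature. Tracing the identity of Lemma \ref{lem-PCcondition} gives the pointwise algebraic inequality $|T|^2\leq c|\C|$ with $c$ dimensional; this is precisely what the paper uses in Step 1 of the proof of Theorem \ref{thm-BBS-curvature} (``$|T|^2\leq c|\C|\leq cK$''). Hence once $|\C|\leq 2K$ on $[t^*,\tau)$, the bound $|T|^2\leq 2cK$ is free and no separate PDE argument is needed; the rest of your scheme (metric equivalence from $|\dt g|\leq|S|+|Q|\leq C(|\C|+|T|^2)$, uniform $C^\infty$ bounds, restarting by short-time existence and uniqueness) then goes through. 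Note that the same point is hidden in your first step: $\heat|\C|^2$ contains the unsigned term $|\C||\n T|^2$, so the doubling-time estimate cannot be run on $|\C|^2$ alone; one must either couple with $\alpha|T|^2$ as in the paper's function $F_0$ (using the good term $-|\n T|^2$ in $\heat|T|^2$) or again invoke $|T|^2\leq c|\C|$. With these two repairs your proof is correct and coincides with the argument the paper intends.
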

This theorem generalizes Streets and Tian\rq{}s result in dimension 2.
Several regularity results of the pluriclosed flow described by the Bismut connection can be found in \cite{2021arXiv210613716G,MR2755684}.
Moreover, we can obtain a torsion estimate in dimension 2.

\begin{theorem}\label{thm-HSdim2-T2}
For a compact complex surface $(M^4,J,g(t))$ that admits a pluriclosed flow on $[0,\tau)$ (where $\tau\leq+\infty$), if the Chern Ricci curvature satisfies
\begin{align*}
|(\Ric+S)(x,t)|_{g(x,t)}\leq K
,\quad 
x\in M \ and\ t\in[0,\tau)
\end{align*}
then there exists a universal constant $c$ such that
\begin{align*}
|T(x,t)|^2_{g(x,t)}\leq \max\left\{cK,\max_{x\in M}|T(x,0)|^2_{g(x,0)}\right\}
,\quad
x\in M \ and\ t\in[0,\tau).
\end{align*}
\end{theorem}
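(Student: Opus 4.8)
The plan is to run a maximum principle argument on the scalar quantity $f=|T|^2_{g(t)}$, exploiting the fact that in complex dimension two the Chern torsion is rigidly controlled by its trace. First I would record the dimension-two algebraic reduction: on a complex surface $\Lambda^{2,0}$ is one-dimensional, so the torsion $T^k_{ij}$ (antisymmetric in the lower holomorphic indices) is completely determined by the torsion $1$-form $\tau_i=T^k_{ki}$, and there is a universal constant with $|T|^2=c_0|\tau|^2$. This lets me trade the full tensor for the simpler object $\tau$ and, more importantly, it is the mechanism by which the quartic torsion terms appearing below acquire a favorable sign.

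Next I would derive the evolution equation for $f$. Writing the flow as $\dt g_{i\bar j}=-\rho^{1,1}_{i\bar j}$ and expanding the Bismut--Ricci form through Chern data, schematically $\rho^{1,1}=\Ric+S+(\text{terms quadratic in }T)$, I differentiate $f=g^{i\bar l}g^{j\bar m}g_{k\bar n}T^k_{ij}\overline{T^n_{lm}}$. Differentiating the metric factors produces both a coupling to $\Ric+S$ and, through the torsion part of $\rho^{1,1}$, a term quartic in $T$; differentiating the components produces the rough Laplacian together with further curvature-times-torsion terms. The expected outcome, valid in dimension two, is a Bochner-type inequality
\begin{align*}
\heat f \leq -|\n T|^2-|\nb T|^2 + C_1\,|\Ric+S|\,f - C_2\,f^2,
\end{align*}
where the negative quartic term $-C_2 f^2$ comes precisely from the dimension-two reduction of the torsion self-interaction and $C_1,C_2$ are universal.

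The main obstacle is twofold. First, I must carefully bookkeep the torsion part of $\rho^{1,1}$ so that the quartic contribution is genuinely negative; this fails in higher dimensions and is exactly where the hypothesis $n=2$ enters, via the identification $|T|^2=c_0|\tau|^2$. Second, I must absorb the indefinite cross terms, of the schematic shape $\C*T*\overline{T}$ and $T*\overline{T}*\n T$, into the good gradient terms $-|\n T|^2-|\nb T|^2$ by Cauchy--Schwarz, so that after absorption only the clean coupling $|\Ric+S|\,f$ and the reaction term $-f^2$ survive on the right-hand side.

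Finally I would close with the maximum principle on $M\times[0,t]$. At an interior spatial maximum of $f$ the gradient terms vanish and the inequality reduces to $\dt f\le C_1 K f - C_2 f^2$, whose right-hand side is strictly negative once $f>(C_1/C_2)K=:cK$. Hence $\max_{x\in M} f(x,t)$ can never cross the threshold $\max\{cK,\max_{x\in M}f(x,0)\}$: if it starts below $cK$ it cannot increase past $cK$, and otherwise it is nonincreasing until it reaches $cK$. This yields the stated estimate $|T|^2_{g(x,t)}\le\max\{cK,\max_{x\in M}|T(x,0)|^2_{g(x,0)}\}$ with universal constant $c$.
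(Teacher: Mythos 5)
Your overall strategy (dimension-two reduction, maximum principle on $|T|^2$, a negative quartic reaction term) is the same as the paper's, but the step where you claim the Bochner inequality with coupling $C_1|\Ric+S|\,f$ has a genuine gap, and it is exactly the step carrying the content of the theorem. When one expands $\heat|T|^2$, the curvature--torsion coupling that actually appears is
\begin{align*}
4\,\C_{b\bar{k}j\bar{a}}T_{ia\bar{b}}T_{\bar{i}\bar{j}k}
=4\,\Ric_{b\bar{k}}T_{12\bar{b}}T_{\bar{1}\bar{2}k}
\qquad(\text{in complex dimension two}),
\end{align*}
i.e.\ a coupling to the full Chern Ricci tensor $\Ric$, \emph{not} to $\Ric+S$. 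The hypothesis bounds only $|\Ric+S|$, and for non-K\"ahler metrics this does not control $|\Ric|$ (the difference of the two Ricci tensors involves derivatives of torsion, by the first Bianchi identity), so at this stage the maximum principle cannot be closed. In the paper, the combination $\Ric+S$ is manufactured precisely by the cubic terms you propose to discard by Cauchy--Schwarz: one does \emph{not} absorb $\n T*T*\Tb$ into $-|\n T|^2$, but instead splits each such term as $\tfrac12\n_{i}|T|^2T_{\bar{i}\bar{j}j}$ plus a remainder involving $\n_{i}T_{\bar{j}\bar{a}b}$, and converts that remainder into curvature via the first Bianchi identity; the resulting curvature terms are exactly what turns the coupling $4\Ric$ into $2(\Ric+S)$, yielding Lemma \ref{lem-torsionDim2}:
\begin{align*}
\heat|T|^2
&=(\Ric+S)_{a\bar{b}}T_{ij\bar{a}}T_{\bar{i}\bar{j}b}
	+\n_{i}|T|^2T_{\bar{i}\bar{j}j}
	+\n_{\bar{i}}|T|^2T_{ij\bar{j}}
	-|T|^4-|\n T|^2-|\nb T|^2 .
\end{align*}
The transport terms $\n_{i}|T|^2T_{\bar{i}\bar{j}j}+\n_{\bar{i}}|T|^2T_{ij\bar{j}}$ vanish at a spatial maximum, so they never need to be absorbed at all.

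There is also a quantitative problem with the absorption as you set it up. The cubic terms enter $\heat|T|^2$ with total weight $4|\n T|\,|T|^2$ (note they involve only $\n T$ and its conjugate, never $\nb T$, so only the single good term $-|\n T|^2$ is available to absorb them); Cauchy--Schwarz absorption then costs at least $+4|T|^4$, while the negative quartic terms supplied by the torsion self-interaction and the $Q$-terms amount in dimension two to only $-\tfrac32|T|^4$. With generic constants your reaction term $-C_2f^2$ therefore comes out with the wrong sign. (A sharper component-by-component estimate using the dimension-two structure can fix the numerics, but it still leaves the coupling attached to $\Ric$ rather than $\Ric+S$, so the first gap is the decisive one.) Once Lemma \ref{lem-torsionDim2} is in hand, your closing maximum-principle argument is correct and coincides with the paper's: at an interior parabolic maximum the gradient terms vanish and $0\le cK|T|^2-|T|^4$ gives $|T|^2\le cK$, while otherwise the maximum sits on the parabolic boundary.
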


Notice that the second Chern Ricci $S_{i\bar{j}}=g^{p\bar{q}}\C_{p\bar{q}i\bar{j}}$ and the classical Ricci curvature
$\Ric_{i\bar{j}}=g^{p\bar{q}}\C_{i\bar{j}p\bar{q}}$ are different for non-K{\"a}hler metrics.

Next we consider a special case.
To begin, let us revisit the concept of Hermitian-symplectic form, which was introduced by Streets and Tian in \cite{MR2673720}.
A Hermitian-symplectic form, denoted as $H$, is a symplectic form with positive (1,1)-part.
It can be expressed by bi-degree as $H=\me+\p+\ov{\p}$, and it is easy to see that the (1,1)-part $\me$ is a pluriclosed metric.
It is worth noting that the pluriclosed flow preserves the Hermitian-symplectic condition (e.g. see \cite{MR3319954,2022arXiv220712643Y}).
In the Hermitian-symplectic case, we can derive a monotonic quantity and apply it to prove that all Hermitian-symplectic solitons are in fact K{\"a}hler Ricci solitons.
Further information about various types of solitons of pluriclosed flow can be found in \cite{MR4023384,MR4287690}.

Here is an outline of the rest paper.
In Section \ref{sec-pre}, we introduce notations that will be used later and review some facts of pluriclosed flow.
In Section \ref{sec-BBS}, we complete the proof of higher order derivative estimates (Theorem \ref{thm-BBS-curvature}) and the torsion estimate in dimension 2 (Theorem \ref{thm-HSdim2-T2}).
In Section \ref{sec-HSflow}, we consider the Hermitian-symplectic case.
We will introduce a monotonic quantity and apply it to Hermitian-symplectic solitons.
\\

{\bf Acknowledgments.} I want to express my gratitude to my advisor, Professor Gang
Tian, for his helpful suggestions and patient guidance.
I thanks Professor Jeffery D. Streets for his helpful comments.
And I thanks Xilun Li for interesting discussion.

\section{Preliminary}\label{sec-pre}
In this section, we recall some basic facts about pluriclosed flow and make some agreements on the notations.

\subsection{Pluriclosed flow}
First of all, we recall a equivalent definition of pluriclosed flow that will be used later.

We use $\C$ and $T$ to denote the curvature tensor and torsion tensor of the Chern connection $\n$, respectively.
And for convenience, we use both $g$ and the K{\"a}hler form $\me$ to represent the metric.
\begin{definition}
The pluriclosed flow can also be written as
\begin{align*}
\dt g=-S+Q
,\quad
g(0)=g_0
\end{align*}
where $S_{i\bar{j}}=g^{\bar{b}a}\C_{a\bar{b}i\bar{j}}$ and 
$Q_{i\bar{j}}=g^{\bar{b}a}g^{\bar{t}s}T_{ia\bar{t}}T_{\bar{j}\bar{b}s}$.
And the initial data $g_0$ is a pluriclosed metric.
\end{definition}

\begin{remark}
The tensor $Q$ is non-negative, which means that, in local coordinates, it is a non-negative Hermitian matrix.
And we have $\tr Q=g^{\bar{b}a}Q_{a\bar{b}}=|T|^2$.
\end{remark}

\begin{remark}
In local coordinates, the torsion tensor is $T_{ij\bar{k}}=\dd_{i}g_{j\bar{k}}-\dd_{j}g_{i\bar{k}}$.
This is because
\begin{align*}
T_{ij\bar{k}}=T_{ij}^{a}g_{a\bar{k}}
=(\Ga_{ij}^{a}-\Ga_{ji}^{a})g_{a\bar{k}}
=(\dd_{i}g_{j\bar{b}}g^{\bar{b}a}-\dd_{j}g_{i\bar{b}}g^{\bar{b}a})
	g_{a\bar{k}}
=\dd_{i}g_{j\bar{k}}-\dd_{j}g_{i\bar{k}},
\end{align*}
where $\Ga_{ij}^{a}=\dd_{i}g_{j\bar{b}}g^{\bar{b}a}$ is the Christoffel symbol of the Chern connection.
\end{remark}

Next we	would like to reformulate the pluriclosed condition $\dd\db\me=0$.
\begin{lemma}\label{lem-PCcondition}
A metric is pluriclosed if and only if
\begin{align*}
\C_{i\bar{j}p\bar{q}}-\C_{p\bar{j}i\bar{q}}
	-\C_{i\bar{q}p\bar{j}}+\C_{p\bar{q}i\bar{j}}
=g^{\bar{b}a}T_{ip\bar{b}}T_{\bar{j}\bar{q}a}
\end{align*}
\end{lemma}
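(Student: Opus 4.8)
The plan is to verify the identity by a direct computation in local holomorphic coordinates, expanding both sides in terms of derivatives of the metric. Recall that in such coordinates the Chern curvature admits the expression
\begin{align*}
\C_{i\bar{j}p\bar{q}}=-\dd_{i}\dd_{\bar{j}}g_{p\bar{q}}+g^{\bar{b}a}(\dd_{i}g_{p\bar{b}})(\dd_{\bar{j}}g_{a\bar{q}}),
\end{align*}
which splits every term of the alternating sum on the left-hand side into a part quadratic in first derivatives of $g$ and a part linear in second derivatives of $g$. The strategy is to show that the quadratic parts assemble exactly into the torsion product on the right-hand side, while the second-derivative parts vanish precisely under the pluriclosed condition.

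First I would collect the quadratic contributions of $\C_{i\bar{j}p\bar{q}}-\C_{p\bar{j}i\bar{q}}-\C_{i\bar{q}p\bar{j}}+\C_{p\bar{q}i\bar{j}}$. All four share the factor $g^{\bar{b}a}$, and after grouping they factor as
\begin{align*}
g^{\bar{b}a}(\dd_{i}g_{p\bar{b}}-\dd_{p}g_{i\bar{b}})\bigl[(\dd_{\bar{j}}g_{a\bar{q}})-(\dd_{\bar{q}}g_{a\bar{j}})\bigr].
\end{align*}
Invoking the local formula $T_{ip\bar{b}}=\dd_{i}g_{p\bar{b}}-\dd_{p}g_{i\bar{b}}$ from the preceding remark, together with its conjugate $T_{\bar{j}\bar{q}a}=\dd_{\bar{j}}g_{a\bar{q}}-\dd_{\bar{q}}g_{a\bar{j}}$ (obtained by conjugating the torsion formula and using that $g$ is Hermitian), this quadratic part equals $g^{\bar{b}a}T_{ip\bar{b}}T_{\bar{j}\bar{q}a}$, which is exactly the right-hand side of the asserted identity.

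It then remains to handle the second-derivative terms, whose sum is
\begin{align*}
-\dd_{i}\dd_{\bar{j}}g_{p\bar{q}}+\dd_{p}\dd_{\bar{j}}g_{i\bar{q}}+\dd_{i}\dd_{\bar{q}}g_{p\bar{j}}-\dd_{p}\dd_{\bar{q}}g_{i\bar{j}},
\end{align*}
so the lemma reduces to the claim that this expression vanishes if and only if the metric is pluriclosed. To see this I would compute $\dd\db\me$ directly: writing $\me=\sqrt{-1}\,g_{k\bar{l}}\,dz^{k}\wedge d\bar{z}^{l}$, the $(2,2)$-form $\dd\db\me$ has coefficients proportional to $\dd_{i}\dd_{\bar{j}}g_{p\bar{q}}$ antisymmetrized over the holomorphic pair $(i,p)$ and the anti-holomorphic pair $(\bar{j},\bar{q})$, which is (a sign times) the displayed expression above. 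Hence $\dd\db\me=0$ is equivalent to its vanishing, and combining this with the previous paragraph establishes the equivalence in both directions.

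The computation is elementary, and I do not expect a genuine analytic obstacle; the only points requiring care are the sign and index conventions, specifically verifying that the quadratic terms recombine into the torsion product rather than some other contraction, correctly identifying the conjugate torsion $T_{\bar{j}\bar{q}a}$, and matching the antisymmetrization pattern of $\dd\db\me$ with the alternating curvature sum so that the two second-order expressions agree up to an overall sign.
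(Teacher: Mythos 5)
Your proposal is correct and follows essentially the same route as the paper's own proof: expand the Chern curvature as $\C_{i\bar{j}p\bar{q}}=-\dd_{i}\dd_{\bar{j}}g_{p\bar{q}}+g^{\bar{b}a}\dd_{i}g_{p\bar{b}}\dd_{\bar{j}}g_{a\bar{q}}$ in local coordinates, factor the quadratic first-derivative terms into $g^{\bar{b}a}T_{ip\bar{b}}T_{\bar{j}\bar{q}a}$, and identify the vanishing of the antisymmetrized second-derivative terms with $\dd\db\me=0$. The sign and index checks you flag are exactly the points the paper verifies, and they work out as you anticipate.
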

\begin{proof}
Recalling the definition of the Chern curvature, we have
\begin{align*}
\C_{i\bar{j}p\bar{q}}
&=-\dd_{\bar{j}}\Ga_{ip}^{a}g_{a\bar{q}}
=-\dd_{\bar{j}}(\dd_{i}g_{p\bar{b}}g^{\bar{b}a})g_{a\bar{q}}
=-\dd_{i\bar{j}}g_{p\bar{q}}
	+g^{\bar{b}a}\dd_{i}g_{p\bar{b}}\dd_{\bar{j}}g_{a\bar{q}}
\end{align*}
So we get
\begin{align*}
&\C_{i\bar{j}p\bar{q}}-\C_{p\bar{j}i\bar{q}}
	-\C_{i\bar{q}p\bar{j}}+\C_{p\bar{q}i\bar{j}}
\\
=&-(\dd_{i\bar{j}}g_{p\bar{q}}-\dd_{p\bar{j}}g_{i\bar{q}}
	-\dd_{i\bar{q}}g_{p\bar{j}}+\dd_{p\bar{q}}g_{i\bar{j}})
	\q
	+g^{\bar{b}a}
	(\dd_{i}g_{p\bar{b}}\dd_{\bar{j}}g_{a\bar{q}}
	-\dd_{p}g_{i\bar{b}}\dd_{\bar{j}}g_{a\bar{q}}
	-\dd_{i}g_{p\bar{b}}\dd_{\bar{q}}g_{a\bar{j}}
	+\dd_{p}g_{i\bar{b}}\dd_{\bar{q}}g_{a\bar{j}})
\end{align*}

The second row is
\begin{align*}
\text{\rm II}
&=g^{\bar{b}a}
	(T_{ip\bar{b}}\dd_{\bar{j}}g_{a\bar{q}}
	+T_{pi\bar{b}}\dd_{\bar{q}}g_{a\bar{j}})
=g^{\bar{b}a}T_{ip\bar{b}}
	(\dd_{\bar{j}}g_{a\bar{q}}
	-\dd_{\bar{q}}g_{a\bar{j}})
=g^{\bar{b}a}T_{ip\bar{b}}T_{\bar{j}\bar{q}a}
\end{align*}
Then we obtain
\begin{align*}
&\C_{i\bar{j}p\bar{q}}-\C_{p\bar{j}i\bar{q}}
	-\C_{i\bar{q}p\bar{j}}+\C_{p\bar{q}i\bar{j}}
	-g^{\bar{b}a}T_{ip\bar{b}}T_{\bar{j}\bar{q}a}
\\
=&-(\dd_{i\bar{j}}g_{p\bar{q}}-\dd_{p\bar{j}}g_{i\bar{q}}
	-\dd_{i\bar{q}}g_{p\bar{j}}+\dd_{p\bar{q}}g_{i\bar{j}})
\end{align*}
On the other hand, we have
\begin{align*}
\db\me=\sqrt{-1}\dd_{\bar{j}}g_{p\bar{q}} 
	d\bar{z}^{j}\wedge dz^{p} \wedge d\bar{z}^{q}
\end{align*}
and
\begin{align*}
\dd\db\me
&=\sqrt{-1}\dd_{i\bar{j}}g_{p\bar{q}}
	dz^{i}\wedge d\bar{z}^{j}\wedge dz^{p} \wedge d\bar{z}^{q}
\end{align*}
Thus $\dd\db\me=0$ is equivalent to
\begin{align*}
\dd_{i\bar{j}}g_{p\bar{q}}-\dd_{p\bar{j}}g_{i\bar{q}}
	-\dd_{i\bar{q}}g_{p\bar{j}}+\dd_{p\bar{q}}g_{i\bar{j}}
	=0
\end{align*}
Combining above together, we conclude that a metric is pluriclosed if and only if
\begin{align*}
\C_{i\bar{j}p\bar{q}}-\C_{p\bar{j}i\bar{q}}
	-\C_{i\bar{q}p\bar{j}}+\C_{p\bar{q}i\bar{j}}
	-g^{\bar{b}a}T_{ip\bar{b}}T_{\bar{j}\bar{q}a}
	=0
\end{align*}
\end{proof}

As an application, we obtain that
\begin{proposition}
Let $(M^{2n},J,g)$ be a complete complex manifold with a pluriclosed metric.
If it is Chern flat, then its universal covering is $\mathbb{C}^{n}$ and, up to scaling, $g$ pulls back to the standard metric on $\mathbb{C}^{n}$.
\end{proposition}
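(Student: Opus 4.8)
The plan is to exploit that pluriclosedness turns Chern-flatness into a vanishing-torsion statement, after which the result reduces to the classical structure theory of complete flat K{\"a}hler manifolds. Since $g$ is Chern flat we have $\C\equiv 0$, so the entire left-hand side of the identity in Lemma \ref{lem-PCcondition} drops out and we are left with
\begin{align*}
g^{\bar{b}a}T_{ip\bar{b}}T_{\bar{j}\bar{q}a}=0
\end{align*}
for all indices $i,p,j,q$. The key observation is that the right-hand side is, pointwise, a non-negative Hermitian form in $T$: setting $j=i$ and $q=p$ and using $T_{\bar{i}\bar{p}a}=\overline{T_{ip\bar{a}}}$, the expression becomes
\begin{align*}
g^{\bar{b}a}T_{ip\bar{b}}\overline{T_{ip\bar{a}}}=|T_{ip\bar{\bullet}}|^2_{g}\geq 0
\end{align*}
for each fixed pair $(i,p)$. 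Positive-definiteness of $g$ then forces $T_{ip\bar{b}}=0$ for all $b$, and hence $T\equiv 0$.

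A Hermitian metric whose Chern torsion vanishes is K{\"a}hler, so $g$ is K{\"a}hler. For a K{\"a}hler metric the Chern connection coincides with the Levi-Civita connection and the Chern curvature agrees with the Riemannian curvature tensor; thus $\C\equiv 0$ now says that $(M,g)$ is a complete \emph{flat} K{\"a}hler manifold. I would then invoke the Killing--Hopf theorem: the universal cover $\tilde{M}$ is isometric to $(\mathbb{R}^{2n},g_{\mathrm{eucl}})$. Because the complex structure $J$ is parallel, it lifts to a parallel complex structure on $\tilde{M}$, which is therefore constant in the Euclidean parallel frame and identifies $\tilde{M}$ biholomorphically with $\mathbb{C}^{n}$. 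In these coordinates the pulled-back metric is a constant positive-definite Hermitian inner product, and a $\mathbb{C}$-linear change of coordinates normalizes it, up to scaling, to the standard metric on $\mathbb{C}^{n}$.

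The only real content lies in the first paragraph: the main obstacle is recognizing that the pluriclosed identity of Lemma \ref{lem-PCcondition} upgrades Chern-flatness to torsion-freeness via the manifest positivity of the quadratic torsion term. Once $T\equiv 0$ is in hand, the remaining steps (equality of the Chern and Levi-Civita connections, flatness of the underlying Riemannian metric, and the Killing--Hopf classification combined with the parallelism of $J$) are standard and require no delicate estimates.
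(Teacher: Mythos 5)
Your proposal is correct and follows essentially the same route as the paper: the paper's proof also deduces $T\equiv 0$ (hence K\"ahlerity) from Lemma \ref{lem-PCcondition} under Chern flatness, and then invokes the classical uniformization theorem for complete flat K\"ahler manifolds. Your write-up merely makes explicit the positivity argument forcing $T=0$ and substitutes the Killing--Hopf theorem plus parallelism of $J$ for the paper's citation of the K\"ahler uniformization theorem, which is an equivalent way to finish.
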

\begin{proof}
According to Lemma \ref{lem-PCcondition}, we know that $g$ is K{\"a}hler since it is Chern flat.
Then we complete our proof by the {\it Uniformization Theorem} for K{\"a}hler manifolds. 
\end{proof}

\subsection{Notations}\label{subs-notation}
In this subsection, we give some notations that will be used later.

In this paper, we use $\n$ to denote the Chern connection.
And for convenience, we denote the gradient of a tensor $A$ by 
$DA=\n A+\nb A$.
Here $\n A$ and $\nb A$ are the \lq\lq{}(1,0)-part\rq\rq{} and \lq\lq{}(0,1)-part\rq\rq{} of the gradient, respectively.

For easy of notations, we will use the same subscript to denote the trace by the metric.
For example, we may write $S_{i\bar{j}}=\C_{a\bar{a}i\bar{j}}$ and $Q_{i\bar{j}}=T_{ia\bar{b}}T_{\bar{j}\bar{a}b}$.
Under this convention, the Chern Laplacian is $\D A=\n_{a}\n_{\bar{a}}A$ and the \lq\lq{}conjugation\rq\rq{} Chern Laplacian is $\Db A=\n_{\bar{a}}\n_{a}A$.

Next, we give two definitions that are often used in later calculations.

\begin{definition}\label{def-h[]}
Let $h=h_{i\bar{j}}$ be a tensor satisfying 
$\ov{h_{i\bar{j}}}=h_{j\bar{i}}$.
Then for any tensor $A=A_{p_1\cdots p_s \bar{q}_1\cdots \bar{q}_t}$, we define a tensor $h[A]$ by
\begin{align*}
(h[A])_{p_{1} \cdots p_s \bar{q}_1 \cdots \bar{q}_t}
&=-\sum_{\alpha=1}^{s}h_{p_{\alpha}\bar{a}}
	A_{p_1\cdots a \cdots p_s \bar{q}_1\cdots \bar{q}_t}
	-\sum_{\beta=1}^{t}h_{a\bar{q}_\beta}
	A_{p_1 \cdots p_s \bar{q}_1 \cdots \bar{a} \cdots \bar{q}_t}
\end{align*}
\end{definition}

Using this notation, we can write the variation of the square norm of a tensor as
\begin{align*}
\dt|A|^2
&=(\dt A,A)+(A,\dt A)+(A,(\dt g)[A])
\end{align*}
where $(\cdot,\cdot)$ is the Hermitian inner product induced by $g(t)$.

\begin{remark}
Since the tensor $h$ in the Definition \ref{def-h[]} is Hermitian , we have 
\begin{align*}
(A,h[A])=(h[A],A).
\end{align*}
\end{remark}

\begin{definition}\label{def-circle}
Given a tensor $Z=Z_{\bullet i\bar{j}}$.
Then for any tensor $A=A_{p_1\cdots p_s \bar{q}_1\cdots \bar{q}_t}$, we define a tensor $Z\ci A$ by
\begin{align*}
(Z\ci A)_{\bullet p_1 \cdots p_s \bar{q}_1\cdots \bar{q}_t}
&=-\sum_{\alpha=1}^{s}Z_{\bullet p_{\alpha}\bar{a}}
	A_{p_1\cdots a \cdots p_s \bar{q}_1\cdots \bar{q}_t}
	+\sum_{\beta=1}^{t}Z_{\bullet a\bar{q}_\beta}
	A_{p_1 \cdots p_s \bar{q}_1 \cdots \bar{a} \cdots \bar{q}_t}
\end{align*}
\end{definition}

This operator can be regarded as a generalization of the commutator of derivative.
For example, if $A=A_{i\bar{j}}$ we have
\begin{align*}
(\n_{p}\n_{\bar{q}}-\n_{\bar{q}}\n_{p})A_{i\bar{j}}
=-\C_{p\bar{q}ia}A_{a\bar{j}}+\C_{p\bar{q}a\bar{j}}A_{i\bar{a}}
=(\C\circ A)_{p\bar{q}i\bar{j}}
\end{align*}
In fact, it is true for any tensor $A\in T^{p,q}M$.

Formally, we have
\begin{lemma}\label{lem-ex-LaplacianAndConj}
For any tensor $A\in T^{p,q}M$, we have
\begin{align*}
(\n_{p}\n_{\bar{q}}-\n_{\bar{q}}\n_{p})A
=\C_{p\bar{q}}\circ A.
\end{align*}
In particular, we have
\begin{align*}
(\D-\Db)A=S\circ A.
\end{align*}
\end{lemma}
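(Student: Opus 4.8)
The plan is to prove the identity by showing that the two operators $A\mapsto(\n_{p}\n_{\bar q}-\n_{\bar q}\n_{p})A$ and $A\mapsto\C_{p\bar q}\ci A$ are literally the same map, and to do this by checking that each is a $C^{\infty}(M)$-linear derivation of the tensor algebra and that the two agree on a generating set. First I would record two structural facts. A direct computation in a local coordinate frame shows that the commutator $\n_{p}\n_{\bar q}-\n_{\bar q}\n_{p}$ is tensorial (zeroth order): all terms carrying derivatives of the argument cancel, leaving an expression that is $C^{\infty}$-linear in $A$. Moreover, differentiating the Leibniz rule $\n(A\otimes B)=(\n A)\otimes B+A\otimes(\n B)$ and subtracting the two orders makes the cross terms cancel, giving
\begin{align*}
(\n_{p}\n_{\bar q}-\n_{\bar q}\n_{p})(A\otimes B)
=\big((\n_{p}\n_{\bar q}-\n_{\bar q}\n_{p})A\big)\otimes B
+A\otimes\big((\n_{p}\n_{\bar q}-\n_{\bar q}\n_{p})B\big),
\end{align*}
so the commutator is a derivation over tensor products. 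Directly from Definition \ref{def-circle}, the operator $\C_{p\bar q}\ci(\cdot)$ enjoys the same derivation property, since its action is a sum of single-index operations and the index set of $A\otimes B$ is the disjoint union of those of $A$ and $B$.

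Next I would verify agreement on generators. On a function both operators vanish: $\C_{p\bar q}\ci f=0$ trivially, while $(\n_{p}\n_{\bar q}-\n_{\bar q}\n_{p})f=(\dd_{p}\dd_{\bar q}-\dd_{\bar q}\dd_{p})f=0$ because the mixed Christoffel symbols of the Chern connection vanish. The essential cases are a $(1,0)$-form $A_{i}$ and a $(0,1)$-form $A_{\bar j}$. Here the vanishing of the mixed symbols gives $\n_{\bar q}A_{i}=\dd_{\bar q}A_{i}$ and $\n_{p}A_{\bar j}=\dd_{p}A_{\bar j}$, so the commutator collapses to a single derivative of a Christoffel symbol. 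Recalling from the proof of Lemma \ref{lem-PCcondition} that $\C_{i\bar j p\bar q}=-\dd_{\bar j}\Ga_{ip}^{a}g_{a\bar q}$, this reproduces exactly $-\C_{p\bar q i\bar a}A_{a}$ for the holomorphic index and, after using the conjugation symmetry of the curvature, $+\C_{p\bar q a\bar j}A_{\bar a}$ for the anti-holomorphic index. These are precisely $(\C_{p\bar q}\ci A)_{i}$ and $(\C_{p\bar q}\ci A)_{\bar j}$, matching the opposite signs in Definition \ref{def-circle}; the worked example $(\n_{p}\n_{\bar q}-\n_{\bar q}\n_{p})A_{i\bar j}=(\C\ci A)_{p\bar q i\bar j}$ recorded above is the $(1,1)$ case combining both.

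Since the tensor algebra is generated over $C^{\infty}(M)$ by $(1,0)$- and $(0,1)$-forms, and both operators are $C^{\infty}$-linear derivations agreeing on functions and on these generators, they agree on every tensor $A\in T^{p,q}M$ by induction on the number of indices. This proves the first identity. For the ``in particular'' statement I would simply trace it: using $\D A=\n_{a}\n_{\bar a}A$, $\Db A=\n_{\bar a}\n_{a}A$, and that the metric is $\n$-parallel,
\begin{align*}
(\D-\Db)A=g^{\bar b a}(\n_{a}\n_{\bar b}-\n_{\bar b}\n_{a})A
=g^{\bar b a}(\C_{a\bar b}\ci A)
=(g^{\bar b a}\C_{a\bar b})\ci A
=S\ci A,
\end{align*}
since $S_{i\bar j}=\C_{a\bar a i\bar j}$.

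The hard part is not the induction but pinning down the two base cases with the correct signs. The subtle point is that the Chern connection carries nonzero torsion, so one might expect a torsion contribution to the Ricci identity; the reason none appears in the \emph{mixed} commutator is that the torsion is of pure $(2,0)$-type, equivalently $T(\dd_{i},\dd_{\bar j})=0$ and $\Ga_{i\bar j}^{\bullet}=0$. Keeping track of the opposite signs attached to holomorphic versus anti-holomorphic indices — exactly the sign discrepancy between the two sums in Definition \ref{def-circle} — together with the conjugation needed to handle the $(0,1)$-case, is the only place where care is genuinely required.
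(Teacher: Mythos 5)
Your proof is correct: the mixed commutator $\n_{p}\n_{\bar q}-\n_{\bar q}\n_{p}$ is indeed tensorial and a derivation (no torsion correction enters because the Chern torsion has no mixed part), it agrees with $\C_{p\bar q}\ci(\cdot)$ on functions and on $(1,0)$- and $(0,1)$-forms with exactly the signs of Definition \ref{def-circle}, and your trace step giving $(\D-\Db)A=S\ci A$ is exactly the paper's. The paper's own proof consists only of the assertion that the first identity follows by direct computation, and your generators-plus-derivation argument is essentially that same verification, organized so that the coordinate computation with Christoffel symbols only needs to be carried out on rank-one tensors rather than on a general $(p,q)$-tensor.
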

\begin{proof}
It is easy to check the first equation by direct computation.
Taking trace with respect to $p$,$\bar{q}$ and recalling the definition of $S$, we obtain the second equation.
\end{proof}

\begin{remark}\label{re-circAnd[]}
It is easy to see that if $A\in T^{p,0}M$, we have $h[T]=h\circ A$.
And since $h$ is Hermitian, we assume $h_{i\bar{j}}(x)=\lambda_i(x)\delta_{ij}$ and $g_{i\bar{j}}(x)=\delta_{ij}$ in a normal coordinates at a fixed point $x$.
Then we have
\begin{align*}
(h\ci A)_{p_1\cdots p_s}
=-\sum_{\alpha=1}^{s}h_{p_\alpha\bar{a}}
	A_{p_1 \cdots a \cdots p_s}
=-\sum_{\alpha=1}^{s}\lambda_{p_\alpha}(x)
	A_{p_1 \cdots p_s}
\end{align*}
And if $h$ is non-negative, we have
\begin{align*}
-(A,h\ci A)=-(A,h[A])\geq 0.
\end{align*}
\end{remark}
More detail of the calculation about those two definitions can be found in \cite{2022arXiv221204060Y}.

\section{Derivative estimates}\label{sec-BBS}
\subsection{Evolution equations}
In this subsection, we collect some evolution equations of higher derivative of curvature and torsion.

Firstly, we state the first and second Bianchi identities of Chern connection.
\begin{lemma}[Bianchi identity]
For Chern connection, we have the first Bianchi identity
\begin{align*}
\n_{\bar{j}}T_{ip\bar{q}}
=\C_{p\bar{j}i\bar{q}}-\C_{i\bar{j}p\bar{q}}
,\quad
\n_{i}T_{\bar{j}\bar{q}p}
=\C_{i\bar{q}p\bar{j}}-\C_{i\bar{j}p\bar{q}}
\end{align*}
and the second Bianchi identity
\begin{align*}
\n_{s}\C_{i\bar{j}p\bar{q}}
=\n_{i}\C_{s\bar{j}p\bar{q}}+T_{is}^{a}\C_{a\bar{j}p\bar{q}}
,\quad
\n_{\bar{k}}\C_{i\bar{j}p\bar{q}}
=\n_{\bar{j}}\C_{i\bar{k}p\bar{q}}
	+T_{\bar{j}\bar{k}}^{\bar{b}}\C_{i\bar{b}p\bar{q}}
\end{align*}
\end{lemma}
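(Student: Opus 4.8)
The plan is to prove both identities by direct computation in local holomorphic coordinates, using the explicit formulas recorded earlier in this section together with the fact that the Chern connection has only holomorphic Christoffel symbols $\Ga_{ip}^{a}=g^{\bar{b}a}\dd_{i}g_{p\bar{b}}$, all mixed symbols vanishing. This is the same circle of ideas already used in the proof of Lemma~\ref{lem-PCcondition}, so the two identities should follow from bookkeeping once the right cancellations are isolated.

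For the first Bianchi identity, since $\bar{j}$ is an antiholomorphic direction and the holomorphic slots $i,p$ receive no correction, one has
\begin{align*}
\n_{\bar{j}}T_{ip\bar{q}}=\dd_{\bar{j}}T_{ip\bar{q}}-\Ga_{\bar{j}\bar{q}}^{\bar{c}}T_{ip\bar{c}}.
\end{align*}
Substituting $T_{ip\bar{q}}=\dd_{i}g_{p\bar{q}}-\dd_{p}g_{i\bar{q}}$ and rewriting the second derivatives of the metric through $\C_{i\bar{j}p\bar{q}}=-\dd_{i}\dd_{\bar{j}}g_{p\bar{q}}+g^{\bar{b}a}\dd_{i}g_{p\bar{b}}\dd_{\bar{j}}g_{a\bar{q}}$ produces $\C_{p\bar{j}i\bar{q}}-\C_{i\bar{j}p\bar{q}}$ plus a leftover quadratic term $g^{\bar{b}a}T_{ip\bar{b}}\dd_{\bar{j}}g_{a\bar{q}}$. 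The crux is that this leftover equals exactly $\Ga_{\bar{j}\bar{q}}^{\bar{c}}T_{ip\bar{c}}$, so it cancels the Christoffel correction and leaves $\n_{\bar{j}}T_{ip\bar{q}}=\C_{p\bar{j}i\bar{q}}-\C_{i\bar{j}p\bar{q}}$. The second equation is then obtained by complex conjugation.

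For the second (differential) Bianchi identity I would expand
\begin{align*}
\n_{s}\C_{i\bar{j}p\bar{q}}-\n_{i}\C_{s\bar{j}p\bar{q}}=\dd_{s}\C_{i\bar{j}p\bar{q}}-\dd_{i}\C_{s\bar{j}p\bar{q}}-(\Ga_{si}^{a}-\Ga_{is}^{a})\C_{a\bar{j}p\bar{q}}-\Ga_{sp}^{a}\C_{i\bar{j}a\bar{q}}+\Ga_{ip}^{a}\C_{s\bar{j}a\bar{q}}.
\end{align*}
Here the antisymmetrization of the Christoffel symbols on the first holomorphic slot is exactly $-(\Ga_{si}^{a}-\Ga_{is}^{a})=T_{is}^{a}$, producing the desired torsion term $T_{is}^{a}\C_{a\bar{j}p\bar{q}}$; the purely third-order part $\dd_{s}\dd_{i}\dd_{\bar{j}}g_{p\bar{q}}$ is symmetric in $s,i$ and drops out of $\dd_{s}\C_{i\bar{j}p\bar{q}}-\dd_{i}\C_{s\bar{j}p\bar{q}}$, while the remaining lower-order products of metric derivatives must be checked to cancel against the last two Christoffel corrections. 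Alternatively, one can obtain the same identity more conceptually by specializing the general torsion Bianchi identity to the index types of the Chern connection, where the $(1,1)$-type of the curvature and the $(2,0)$-type of the torsion kill all but these terms. The conjugate version follows by conjugation.

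The computations themselves are elementary; the main obstacle is organizing the many lower-order terms so that the cancellations are visible and keeping the sign conventions for torsion and curvature consistent throughout. One tempting shortcut is to evaluate at a point in coordinates chosen so that the first derivatives of $g$ are as simple as possible, but for a non-K{\"a}hler metric one cannot make all first derivatives of $g$ vanish simultaneously---this is precisely what the torsion measures---so such normalizations must be used with care and the torsion terms retained.
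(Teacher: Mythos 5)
Your computation is correct, but note that the paper does not prove this lemma at all: it is stated as a standard fact about the Chern connection, so there is no argument of the paper's to compare against. Your route is a sound direct verification in holomorphic coordinates. The first Bianchi identity goes through exactly as you describe: the leftover quadratic term $g^{\bar{b}a}T_{ip\bar{b}}\dd_{\bar{j}}g_{a\bar{q}}$ is precisely $\Ga_{\bar{j}\bar{q}}^{\bar{c}}T_{ip\bar{c}}$, and the conjugate version follows from the Hermitian symmetry $\ov{\C_{i\bar{j}p\bar{q}}}=\C_{j\bar{i}q\bar{p}}$ of the Chern curvature (which you use implicitly and should state). For the second Bianchi identity, the cancellation you flag as ``must be checked'' does close, and it is worth recording how: from $\Ga_{ip}^{b}=g^{\bar{c}b}\dd_{i}g_{p\bar{c}}$ and $\dd_{s}g^{\bar{c}b}=-g^{\bar{c}a}\Ga_{sa}^{b}$ one gets
\begin{align*}
\dd_{s}\Ga_{ip}^{b}-\dd_{i}\Ga_{sp}^{b}
=\Ga_{ia}^{b}\Ga_{sp}^{a}-\Ga_{sa}^{b}\Ga_{ip}^{a},
\end{align*}
and applying $\dd_{\bar{j}}$ to this identity, then combining with $\dd_{s}g_{b\bar{q}}=\Ga_{sb}^{a}g_{a\bar{q}}$, shows that the lower-order products in $\dd_{s}\C_{i\bar{j}p\bar{q}}-\dd_{i}\C_{s\bar{j}p\bar{q}}$ cancel exactly against the Christoffel corrections $-\Ga_{sp}^{a}\C_{i\bar{j}a\bar{q}}+\Ga_{ip}^{a}\C_{s\bar{j}a\bar{q}}$, leaving only the torsion term $T_{is}^{a}\C_{a\bar{j}p\bar{q}}$. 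With that computation written out (or with your alternative of citing the general torsion Bianchi identity specialized to the $(1,1)$-type curvature and $(2,0)$-type torsion of the Chern connection), the proof is complete; your closing caution about the impossibility of normal coordinates killing all first derivatives of $g$ in the non-K{\"a}hler case is also well taken.
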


And then we compute the variation of Christoffel symbol.
\begin{lemma}\label{lem-EvChristoffel}
\begin{align*}
\dt\Ga_{ij}^{k}
&=-\n_{i}S_{j\bar{a}}g^{\bar{a}k}
	+\n_{i}Q_{j\bar{a}}g^{\bar{a}k}
\end{align*}
Briefly,
\begin{align*}
\dt\Ga=\n\C*1+\C*T+\n T*\Tb
\end{align*}
\end{lemma}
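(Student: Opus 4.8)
The plan is to compute $\dt\Ga_{ij}^{k}$ directly from the defining formula $\Ga_{ij}^{k}=\dd_{i}g_{j\bar{b}}\,g^{\bar{b}k}$ and then reorganize the result into covariant form. The guiding principle is that $\dt\Ga$ is the $t$-derivative of a one-parameter family of Chern connections; since the difference of two connections is a tensor, $\dt\Ga$ is itself a genuine tensor, so whatever I produce must collect into a covariant expression in which every bare Christoffel symbol cancels. This is both a sanity check and a hint for how the terms should pair up.

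First I would differentiate in $t$, applying the product rule to $\Ga_{ij}^{k}=\dd_{i}g_{j\bar{b}}\,g^{\bar{b}k}$ together with $\dt g^{\bar{b}k}=-g^{\bar{b}p}g^{\bar{q}k}(\dt g)_{p\bar{q}}$. Writing $h=\dt g$, this yields two terms: one from the variation of the inverse metric, of the shape $-g^{\bar{b}p}g^{\bar{q}k}h_{p\bar{q}}\,\dd_{i}g_{j\bar{b}}$, and one from differentiating $\dd_{i}g_{j\bar{b}}$, namely $g^{\bar{b}k}\dd_{i}h_{j\bar{b}}$. Next I would convert the ordinary derivatives into covariant ones via $\dd_{i}g_{j\bar{b}}=\Ga_{ij}^{a}g_{a\bar{b}}$ and $\dd_{i}h_{j\bar{b}}=\n_{i}h_{j\bar{b}}+\Ga_{ij}^{p}h_{p\bar{b}}$, using that a barred index carries no Chern Christoffel symbol under a holomorphic derivative. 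The two $\Ga$-contributions are then identical up to sign and cancel, leaving the clean identity $\dt\Ga_{ij}^{k}=g^{\bar{a}k}\n_{i}h_{j\bar{a}}$. Substituting $h=\dt g=-S+Q$ gives exactly $\dt\Ga_{ij}^{k}=-\n_{i}S_{j\bar{a}}\,g^{\bar{a}k}+\n_{i}Q_{j\bar{a}}\,g^{\bar{a}k}$.

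For the schematic form I would unpack the right-hand side. The term $\n_{i}S_{j\bar{a}}\,g^{\bar{a}k}$ is a metric contraction of $\n\C$, since $S$ is a trace of $\C$; this is the $\n\C*1$ summand. For $\n_{i}Q_{j\bar{a}}$ I would write $Q$ as $T*\Tb$ and apply Leibniz, producing one term of shape $\n T*\Tb$, where the derivative falls on the unbarred torsion factor, and one term where the derivative falls on the conjugate-torsion factor. This second covariant derivative is precisely where the first Bianchi identity $\n_{i}T_{\bar{j}\bar{q}p}=\C_{i\bar{q}p\bar{j}}-\C_{i\bar{j}p\bar{q}}$ enters: it rewrites the derivative of $\Tb$ as Chern curvature, converting that term into the $\C*T$ summand. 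Collecting the three pieces yields $\dt\Ga=\n\C*1+\C*T+\n T*\Tb$.

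There is no serious obstacle here; the content is an index computation. The one place to stay attentive is the cancellation of the Christoffel terms in the middle step, which both produces the covariant formula and confirms the a priori tensoriality of $\dt\Ga$, and, for the schematic statement, remembering to invoke the Bianchi identity rather than carrying an undifferentiated derivative of $\Tb$, so that the answer is expressed purely through $\C$, $T$, and $\n T$ as claimed.
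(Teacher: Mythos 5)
Your proposal is correct and follows essentially the same route as the paper: differentiate $\Ga_{ij}^{k}=\dd_{i}g_{j\bar{a}}g^{\bar{a}k}$ in $t$, observe that the inverse-metric variation supplies exactly the Christoffel term needed to covariantize, obtaining $\dt\Ga_{ij}^{k}=\n_{i}(\dt g_{j\bar{a}})g^{\bar{a}k}$, then substitute $\dt g=-S+Q$ and use Leibniz plus the first Bianchi identity on $\n_{i}Q$ to get the schematic form. The only difference is cosmetic: you make the tensoriality of $\dt\Ga$ and the cancellation of bare Christoffel symbols explicit, whereas the paper absorbs them in one line.
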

\begin{proof}
\begin{align*}
\dt\Ga_{ij}^{k}
&=\dt(\dd_{i}g_{j\bar{a}}g^{\bar{a}k})
=\dd_{i}\dt g_{j\bar{a}}g^{\bar{a}k}
	-\dd_{i}g_{j\bar{a}}g^{\bar{a}s}\dt g_{s\bar{l}}g^{\bar{l}k}
=\dd_{i}\dt g_{j\bar{a}}g^{\bar{a}k}
	-\Ga_{ij}^{s}\dt g_{s\bar{a}}g^{\bar{a}k}
\\
&=\n_{i}(\dt g_{j\bar{a}})g^{\bar{a}k}
=-\n_{i}S_{j\bar{a}}g^{\bar{a}k}
	+\n_{i}Q_{j\bar{a}}g^{\bar{a}k}
\end{align*}
Using Bianchi identity, we get
\begin{align*}
\n_{i}Q_{j\bar{a}}
&=\n_{i}T_{jb\bar{c}}T_{\bar{a}\bar{b}c}
	+T_{jb\bar{c}}\n_{i}T_{\bar{a}\bar{b}c}
=\n_{i}T_{jb\bar{c}}T_{\bar{a}\bar{b}c}
	+\C_{i\bar{b}c\bar{a}}T_{jb\bar{c}}
	-\C_{i\bar{a}c\bar{b}}T_{jb\bar{c}}
\end{align*}
Thus we obtain
\begin{align*}
\dt\Ga=\n\C*1+\C*T+\n T*\Tb
\end{align*}
\end{proof}

Next, we give the evolution equations of curvature and torsion.
And we leave the tedious calculation in the appendix.

\begin{lemma}\label{lem-EvCurvature}
\begin{align*}
\heat\C_{i\bar{j}p\bar{q}}
&=\n_{a}\C_{i\bar{b}p\bar{q}}	T_{\bar{a}\bar{j}b}
	-\n_{i}\C_{a\bar{j}p\bar{b}}T_{\bar{q}\bar{a}b}
	+\n_{i}\C_{p\bar{j}a\bar{b}}T_{\bar{q}\bar{a}b}
	+\n_{\bar{j}}\C_{b\bar{a}p\bar{q}}T_{ai\bar{b}}	
	-\n_{\bar{j}}\C_{i\bar{a}b\bar{q}}T_{pa\bar{b}}
	\q
	+\n_{\bar{j}}\C_{i\bar{q}b\bar{a}}T_{pa\bar{b}}
	+\C_{a\bar{j}i\bar{b}}\C_{b\bar{a}p\bar{q}}
	-\C_{a\bar{j}b\bar{a}}\C_{i\bar{b}p\bar{q}}
	+\C_{a\bar{j}p\bar{b}}\C_{i\bar{a}b\bar{q}}
	-\C_{a\bar{j}b\bar{q}}\C_{i\bar{a}p\bar{b}}
	\q
	+\C_{i\bar{j}a\bar{b}}\C_{b\bar{a}p\bar{q}}
	-\C_{a\bar{j}i\bar{b}}\C_{b\bar{a}p\bar{q}}
	+\C_{a\bar{j}b\bar{a}}\C_{i\bar{b}p\bar{q}}
	-\C_{a\bar{a}b\bar{j}}\C_{i\bar{b}p\bar{q}}
	-\C_{a\bar{j}p\bar{b}}\C_{i\bar{a}b\bar{q}}
	\q
	+\C_{p\bar{j}a\bar{b}}\C_{i\bar{a}b\bar{q}}
	+\C_{a\bar{j}p\bar{b}}\C_{i\bar{q}b\bar{a}}
	-\C_{p\bar{j}a\bar{b}}\C_{i\bar{q}b\bar{a}}	
	-\C_{i\bar{j}p\bar{a}}S_{a\bar{q}}
	-\C_{i\bar{j}p\bar{c}}T_{ca\bar{b}}T_{\bar{q}\bar{a}b}
	\q
	-\C_{i\bar{j}a\bar{c}}T_{pc\bar{b}}T_{\bar{q}\bar{a}b}
	+\C_{i\bar{j}c\bar{b}}T_{pa\bar{c}}T_{\bar{q}\bar{a}b}
	+\C_{i\bar{j}p\bar{a}}Q_{a\bar{q}}
	-\n_{i}T_{pa\bar{b}}\n_{\bar{j}}T_{\bar{q}\bar{a}b}
\end{align*}
Briefly,
\begin{align*}
\heat\C
&=\n\C*\Tb+\nb\C*T+\C*\C+\C*T*\Tb+\n T*\ov{\n T}
\end{align*}
\end{lemma}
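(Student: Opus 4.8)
The plan is to differentiate $\C$ in time, substitute the flow equation $\dt g=-S+Q$, and then extract the Chern Laplacian $\D\C=\n_{a}\n_{\bar a}\C$ from the result, showing that every remaining term falls into one of the five schematic classes $\n\C*\Tb$, $\nb\C*T$, $\C*\C$, $\C*T*\Tb$, $\n T*\ov{\n T}$.

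First I would use $\C_{i\bar j p}{}^{k}=-\dd_{\bar j}\Ga_{ip}^{k}$. Because $\dt\Ga$ is a tensor and the Chern connection carries no mixed Christoffel symbols, differentiating in $t$ and lowering the upper index with $g_{k\bar q}$ yields
\begin{align*}
\dt\C_{i\bar j p\bar q}
=-\n_{\bar j}(\dt\Ga_{ip}^{k})g_{k\bar q}
+\C_{i\bar j p}{}^{k}\,\dt g_{k\bar q}.
\end{align*}
By Lemma~\ref{lem-EvChristoffel} the first term equals $\n_{\bar j}\n_{i}S_{p\bar q}-\n_{\bar j}\n_{i}Q_{p\bar q}$, and the flow equation gives $\dt g_{k\bar q}=-S_{k\bar q}+Q_{k\bar q}$, so the last term is immediately of type $\C*\C+\C*T*\Tb$ (recall $S$ is a trace of $\C$ and $Q=T*\Tb$).

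The core of the argument is to show $\n_{\bar j}\n_{i}S_{p\bar q}=\D\C_{i\bar j p\bar q}+(\text{admissible terms})$. Writing $S_{p\bar q}=g^{\bar b a}\C_{a\bar b p\bar q}$ and applying the holomorphic second Bianchi identity to slide the derivative index $i$ into the first curvature slot gives $\n_{i}S_{p\bar q}=\n_{a}\C_{i\bar a p\bar q}+\C*T$. Then I would apply $\n_{\bar j}$, commute it past $\n_{a}$ using Lemma~\ref{lem-ex-LaplacianAndConj} at the cost of a $\C*\C$ term, and use the antiholomorphic second Bianchi identity to turn $\n_{\bar j}\C_{i\bar a p\bar q}$ into $\n_{\bar a}\C_{i\bar j p\bar q}+\C*T$. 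The leading piece reassembles exactly into $\n_{a}\n_{\bar a}\C_{i\bar j p\bar q}=\D\C_{i\bar j p\bar q}$. The decisive observation is that each stray torsion factor produced along the way is acted on by one further derivative, which the first Bianchi identities $\n_{\bar j}T=\C$ and $\n_{i}\Tb=\C$ convert back into curvature; thus the reduction generates only $\n\C*\Tb$, $\nb\C*T$ and $\C*\C$.

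For the remaining term $-\n_{\bar j}\n_{i}Q_{p\bar q}$ with $Q=T*\Tb$, I would expand by the Leibniz rule. The genuinely mixed piece $\n_{\bar j}T*\n_{i}\Tb$ collapses to $\C*\C$ since both factors are curvature by the first Bianchi identity, while $\n_{i}T*\n_{\bar j}\Tb$ is honestly of type $\n T*\ov{\n T}$; the two pieces that place a second derivative on one torsion factor reduce, after one commutation and another use of the first Bianchi identity, into $\n\C*\Tb$, $\nb\C*T$ and $\C*T*\Tb$. Summing all contributions and subtracting $\D\C$ produces the advertised schematic identity, and tracking the precise tensors through these steps yields the full expression displayed in the statement.

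I expect the main obstacle to be organizational rather than conceptual: assembling the explicit formula with its roughly two dozen terms, correct signs, and correct index placements. The one genuinely delicate point is confirming that no term survives carrying two honest covariant derivatives of torsion, nor a bare $\n T$ paired with $\C$; it is precisely the first Bianchi identity---which exhibits half of every torsion derivative as curvature---that forces the output into the five named classes. This bookkeeping is what is deferred to the appendix.
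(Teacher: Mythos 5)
Your proposal follows essentially the same route as the paper's appendix proof: time-differentiate $\C_{i\bar jp\bar q}=-\dd_{\bar j}\Ga_{ip}^{a}g_{a\bar q}$ using Lemma \ref{lem-EvChristoffel}, extract $\D\C=\n_a\n_{\bar a}\C$ from $\n_{\bar j}\n_i S_{p\bar q}$ via the two second Bianchi identities and the commutation of Lemma \ref{lem-ex-LaplacianAndConj}, and reduce $\n_{\bar j}\n_i Q_{p\bar q}$ by Leibniz plus the first Bianchi identity, with only the single cross term $\n_i T*\n_{\bar j}\Tb$ surviving as $\n T*\ov{\n T}$. The plan is correct, and the bookkeeping you defer is exactly what the paper's appendix carries out.
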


\begin{lemma}\label{lem-EvTorsion}
\begin{align*}
\heat T_{ij\bar{k}}
&=-\C_{b\bar{k}i\bar{a}}T_{ja\bar{b}}
	+\C_{b\bar{k}j\bar{a}}T_{ia\bar{b}}
	+T_{ib\bar{c}}T_{\bar{a}\bar{k}c}T_{ja\bar{b}}
	-T_{jb\bar{c}}T_{\bar{a}\bar{k}c}T_{ia\bar{b}}
	\q
	-S_{a\bar{k}}T_{ij\bar{a}}
	+\n_{i}T_{ja\bar{b}}T_{\bar{k}\bar{a}b}
	-\n_{j}T_{ia\bar{b}}T_{\bar{k}\bar{a}b}
	+T_{ij\bar{a}}Q_{a\bar{k}}
\end{align*}
Briefly,
\begin{align*}
\heat T
&=\C*T+\n T*\Tb+T*T*\Tb
\end{align*}
\end{lemma}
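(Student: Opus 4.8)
The plan is to compute $\dt T_{ij\bar{k}}$ and $\D T_{ij\bar{k}}$ separately and subtract, using the two Bianchi identities and the pluriclosed condition of Lemma \ref{lem-PCcondition} to collapse the resulting curvature terms into the stated form.

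First I would handle the time derivative. Since the holomorphic coordinates are time-independent, differentiating $T_{ij\bar{k}}=\dd_{i}g_{j\bar{k}}-\dd_{j}g_{i\bar{k}}$ gives $\dt T_{ij\bar{k}}=\dd_{i}h_{j\bar{k}}-\dd_{j}h_{i\bar{k}}$ with $h:=\dt g=-S+Q$. Converting the partial derivatives to Chern-covariant ones introduces Christoffel corrections whose antisymmetric combination $(\Ga_{ij}^{a}-\Ga_{ji}^{a})h_{a\bar{k}}=T_{ij\bar{a}}h_{a\bar{k}}$ is precisely the torsion (this is the same mechanism underlying Lemma \ref{lem-EvChristoffel}). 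Hence $\dt T_{ij\bar{k}}=\n_{i}h_{j\bar{k}}-\n_{j}h_{i\bar{k}}+T_{ij\bar{a}}h_{a\bar{k}}$, and substituting $h=-S+Q$ isolates the first-order terms $-\n_{i}S_{j\bar{k}}+\n_{j}S_{i\bar{k}}$, the two terms $\n_{i}Q_{j\bar{k}}-\n_{j}Q_{i\bar{k}}$, and the algebraic terms $-S_{a\bar{k}}T_{ij\bar{a}}+Q_{a\bar{k}}T_{ij\bar{a}}$.

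Next I would compute the Laplacian $\D T_{ij\bar{k}}=\n_{a}\n_{\bar{a}}T_{ij\bar{k}}$. The first Bianchi identity turns the inner derivative into curvature, $\n_{\bar{a}}T_{ij\bar{k}}=\C_{j\bar{a}i\bar{k}}-\C_{i\bar{a}j\bar{k}}$, and then the second Bianchi identity moves $\n_{a}$ onto the first slot and, after taking the trace so that $\C_{a\bar{a}i\bar{k}}=S_{i\bar{k}}$, yields $\D T_{ij\bar{k}}=\n_{j}S_{i\bar{k}}-\n_{i}S_{j\bar{k}}+T_{ja}{}^{b}\C_{b\bar{a}i\bar{k}}-T_{ia}{}^{b}\C_{b\bar{a}j\bar{k}}$. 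The decisive observation is that in forming $\heat T=\dt T-\D T$ the first-order curvature terms $\n S$ cancel identically; this is exactly why the torsion evolution carries no $\n\C$ term, in contrast to the curvature evolution of Lemma \ref{lem-EvCurvature}.

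It then remains to expand $\n_{i}Q_{j\bar{k}}-\n_{j}Q_{i\bar{k}}$ by the product rule on $Q_{j\bar{k}}=T_{ja\bar{b}}T_{\bar{k}\bar{a}b}$, applying the first Bianchi identity to the conjugate factor via $\n_{i}T_{\bar{k}\bar{a}b}=\C_{i\bar{a}b\bar{k}}-\C_{i\bar{k}b\bar{a}}$; this produces the $\n T*\Tb$ terms $\n_{i}T_{ja\bar{b}}T_{\bar{k}\bar{a}b}-\n_{j}T_{ia\bar{b}}T_{\bar{k}\bar{a}b}$ together with several curvature--torsion terms. Collecting all curvature--torsion terms (those from the $\n Q$ expansion and the two surviving from $-\D T$) and grouping by the free torsion factor $T_{ja\bar{b}}$ respectively $T_{ia\bar{b}}$, I would invoke Lemma \ref{lem-PCcondition} in the form $\C_{i\bar{a}b\bar{k}}-\C_{b\bar{a}i\bar{k}}-\C_{i\bar{k}b\bar{a}}+\C_{b\bar{k}i\bar{a}}=T_{ib\bar{c}}T_{\bar{a}\bar{k}c}$ to collapse each group into a single curvature term plus a cubic torsion term, giving $-\C_{b\bar{k}i\bar{a}}T_{ja\bar{b}}+\C_{b\bar{k}j\bar{a}}T_{ia\bar{b}}$ and $T_{ib\bar{c}}T_{\bar{a}\bar{k}c}T_{ja\bar{b}}-T_{jb\bar{c}}T_{\bar{a}\bar{k}c}T_{ia\bar{b}}$. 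Together with the algebraic terms $-S_{a\bar{k}}T_{ij\bar{a}}+Q_{a\bar{k}}T_{ij\bar{a}}$ this is exactly the claimed identity, and the schematic form $\heat T=\C*T+\n T*\Tb+T*T*\Tb$ follows at once. The main obstacle is the bookkeeping in this last step: tracking the index positions and signs of the six curvature--torsion terms precisely enough that each group matches a single instance of the pluriclosed condition, since it is only through that condition that the cubic torsion terms are generated and the curvature contribution reduces to the compact antisymmetric pair.
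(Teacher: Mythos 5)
Your proposal is correct and follows essentially the same route as the paper's proof: compute $\dt T$ from $\dt g=-S+Q$, convert the $\n S$ terms into $\D T$ plus curvature--torsion corrections via the first and second Bianchi identities, expand $\n_i Q_{j\bar k}-\n_j Q_{i\bar k}$ with the first Bianchi identity, and collapse the grouped curvature--torsion terms with Lemma \ref{lem-PCcondition}. The only cosmetic differences are that you derive the time derivative directly from $T_{ij\bar k}=\dd_i g_{j\bar k}-\dd_j g_{i\bar k}$ rather than citing Lemma \ref{lem-EvChristoffel}, and you phrase the key step as a cancellation of $\n S$ in $\dt T-\D T$ rather than substituting $\D T$ into the expression for $\dt T$; these are the same computation read in opposite directions.
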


To compute higher order derivative, we need the following two lemmas.
\begin{lemma}\label{lem-exdtAndconnection}
\begin{align*}
&[\dt,\n]A=\n\C*A+\C*T*A+\n T*\Tb*A
\\
&[\dt,D]A=D\C*A+DT*T*A
\end{align*}
where $D=\n+\nb$
\end{lemma}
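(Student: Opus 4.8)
The plan is to reduce everything to the variation of the Christoffel symbols computed in Lemma \ref{lem-EvChristoffel}. The essential observation is that in local holomorphic coordinates the $(1,0)$-part of the Chern connection acts by $\n_{i}A=\dd_{i}A-\Ga*A$, where $\Ga_{ij}^{k}$ is contracted against the holomorphic indices of $A$, and the mixed Christoffel symbols of the Chern connection vanish. Since the coordinate derivative $\dd_{i}$ commutes with $\dt$, all of the $t$-dependence of $\n_{i}$ sits in $\Ga$, so that $[\dt,\n_{i}]A=-(\dt\Ga)*A$. Substituting the schematic form $\dt\Ga=\n\C*1+\C*T+\n T*\Tb$ from Lemma \ref{lem-EvChristoffel} and contracting each term against $A$ gives at once
\[
[\dt,\n]A=\n\C*A+\C*T*A+\n T*\Tb*A,
\]
which is the first identity.

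For the $(0,1)$-part I would run the same argument for $\nb_{\bar{i}}$, noting that its relevant Christoffel symbol is $\ov{\Ga_{ij}^{k}}$, so that $[\dt,\nb]A=-(\ov{\dt\Ga})*A$. Conjugating the formula for $\dt\Ga$, and using that $\C$ is real in the sense that $\ov{\C}$ is again a curvature-type tensor while $\ov{T}=\Tb$ and $\ov{\n}=\nb$, yields $\ov{\dt\Ga}=\nb\C*1+\C*\Tb+\nb\Tb*T$, hence
\[
[\dt,\nb]A=\nb\C*A+\C*\Tb*A+\nb\Tb*T*A.
\]

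Finally, adding the two commutators and recalling $D=\n+\nb$, the curvature-derivative terms combine to $(\n+\nb)\C*A=D\C*A$. For the remaining terms I would invoke the first Bianchi identity, which schematically reads $\nb T=\C$ and $\n\Tb=\C$; thus each factor $\C$ standing next to a torsion can be rewritten as a first covariant derivative of torsion, so that $\C*T*A$, $\C*\Tb*A$, $\n T*\Tb*A$ and $\nb\Tb*T*A$ are all of the form $DT*T*A$, treating $T$ and $\Tb$ uniformly as torsion and $\n,\nb$ uniformly as $D$. This produces
\[
[\dt,D]A=D\C*A+DT*T*A.
\]

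The delicate points here are purely bookkeeping: confirming that the mixed Chern Christoffel symbols vanish so that $[\dt,\n]$ picks up only $\dt\Ga$, and carefully conjugating Lemma \ref{lem-EvChristoffel} for the $\nb$ computation. The one genuinely substantive step, which I expect to be the main obstacle, is the use of the first Bianchi identity to absorb the standalone $\C*T*A$ term (which survives in the single-connection formula) into $DT*T*A$ in the $D$-formula; without it the $D$-identity would carry an extra curvature-torsion term.
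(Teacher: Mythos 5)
Your proof of the first identity coincides with the paper's: since the mixed Chern Christoffel symbols vanish and $\dt$ commutes with $\dd_{i}$, one has $[\dt,\n]A=-(\dt\Ga)*A$, and substituting Lemma \ref{lem-EvChristoffel} finishes it. Your route to the second identity (conjugate, add, clean up) is also the natural one, and it is exactly this last ``clean-up'' step --- which you correctly single out as the crux --- where your justification fails. You argue that because the first Bianchi identity schematically reads $\nb T*1=\C*1$ and $\n\Tb*1=\C*1$, ``each factor $\C$ standing next to a torsion can be rewritten as a first covariant derivative of torsion.'' This reverses the Bianchi identity, and the reversal is not valid: the identity expresses only the particular antisymmetrized combination $\C_{i\bar{q}p\bar{j}}-\C_{i\bar{j}p\bar{q}}$ as $\n_{i}T_{\bar{j}\bar{q}p}$; a general curvature contraction is not a torsion derivative (for instance the term $S_{a\bar{k}}T_{ij\bar{a}}$ appearing in Lemma \ref{lem-EvTorsion} is not of the form $DT*T$). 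Taken literally, your principle would let one ``simplify'' curvature terms throughout the paper that cannot in fact be simplified, so as written this step is a genuine gap.

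The repair is small and is already contained in the proof of Lemma \ref{lem-EvChristoffel}: there the term recorded schematically as $\C*T$ arises precisely by applying Bianchi to $T_{jb\bar{c}}\n_{i}T_{\bar{a}\bar{b}c}$, i.e.\ to $T*\n\Tb$. So instead of trying to invert Bianchi after the fact, simply use the pre-Bianchi form
\begin{align*}
\dt\Ga=\n\C*1+\n T*\Tb+\n\Tb*T.
\end{align*}
Then $[\dt,\n]A=\n\C*A+\n T*\Tb*A+\n\Tb*T*A$, conjugation gives $[\dt,\nb]A=\nb\C*A+\nb\Tb*T*A+\nb T*\Tb*A$, and adding the two yields
\begin{align*}
[\dt,D]A=D\C*A+DT*\Tb*A+D\Tb*T*A=D\C*A+DT*T*A,
\end{align*}
where the last equality uses only the stated convention that $T$ and $\Tb$ are not distinguished in schematic notation --- no reverse Bianchi identity is needed. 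With this replacement your argument is complete and agrees with the paper's proof, which compresses the entire second identity into the word ``similarly.''
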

\begin{proof}
By direct computation,
\begin{align*}
\dt\n_{i}A_{p_1 \cdots p_l \bar{q}_1 \cdots \bar{q}_k}
&=\dt\left(\dd_{i}A_{p_1 \cdots p_l \bar{q}_1 \cdots \bar{q}_k}
	-\sum_{\alpha=1}^{s}\Ga_{ip_\alpha}^{a}
	A_{p_1 \cdots a \cdots p_l \bar{q}_1 \cdots \bar{q}_k}\right)
\\
&=(\n\dt A)_{p_1 \cdots p_l \bar{q}_1 \cdots \bar{q}_k}
	-\sum_{\alpha=1}^{s}\dt\Ga_{ip_\alpha}^{a}
	A_{p_1 \cdots a \cdots p_l \bar{q}_1 \cdots \bar{q}_k}
\end{align*}
Applying Lemma \ref{lem-EvChristoffel}, we obtain
\begin{align*}
(\dt\n-\n\dt)A=\n\C*A+\C*T*A+\n T*\Tb*A
\end{align*}
Similarly, we have
\begin{align*}
(\dt D-D\dt)A=D\C*A+DT*T*A
\end{align*}
\end{proof}

\begin{remark}
From the first Bianchi identity, we have
\begin{align*}
\n\Tb*1=\C*1.
\end{align*}
But because the norm of $T$ and $\Tb$ are the same, we will not distinguish $A*T$ and $A*\Tb$ in the following when the first Bianchi identity are not required.
\end{remark}

\begin{lemma}\label{lem-ex-LaplaceAndconnection}
\begin{align*}
&[\n,\D]A=\n\C*A+\C*\n A+\n\nb A*T
\\
&[\nb,\D]A=\C*\nb A+\n\nb A*\Tb
\\
&[D,\D]A=D\C*A+\C*DA+D^2A*T
\end{align*}
\end{lemma}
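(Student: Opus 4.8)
The plan is to derive all three identities by pushing a single first-order derivative through the Chern Laplacian $\D=\n_{a}\n_{\bar a}$, one factor at a time, using two elementary commutators as building blocks. Since the Chern curvature is of type $(1,1)$, its $(2,0)$ and $(0,2)$ parts vanish, so the commutator of two holomorphic (resp.\ two antiholomorphic) derivatives carries only torsion, while the mixed commutator is exactly the curvature of Lemma \ref{lem-ex-LaplacianAndConj}:
\begin{align*}
[\n_{i},\n_{a}]A=-T_{ia}^{b}\n_{b}A
,\quad
[\n_{\bar i},\n_{\bar a}]A=-T_{\bar i\bar a}^{\bar b}\n_{\bar b}A
,\quad
[\n_{i},\n_{\bar a}]A=\C_{i\bar a}\ci A.
\end{align*}
The first two are of schematic type $\n A*T$ and $\nb A*\Tb$; the third is $\C*A$. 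These three facts are the entire engine of the proof.

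For the first identity I would write $[\n_{i},\D]A=\n_{i}\n_{a}\n_{\bar a}A-\n_{a}\n_{\bar a}\n_{i}A$ and move $\n_{i}$ to the right. Commuting past $\n_{a}$ costs the torsion term $-T_{ia}^{b}\n_{b}\n_{\bar a}A$, of type $\n\nb A*T$; commuting the surviving $\n_{i}$ past $\n_{\bar a}$ costs $\n_{a}(\C_{i\bar a}\ci A)$, which the Leibniz rule splits as $(\n_{a}\C_{i\bar a})\ci A+\C_{i\bar a}\ci\n_{a}A$, i.e.\ $\n\C*A+\C*\n A$. Collecting the three pieces gives exactly $[\n,\D]A=\n\C*A+\C*\n A+\n\nb A*T$.

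The second identity runs on the same scheme with $\n_{\bar i}$ replacing $\n_{i}$, with one genuine subtlety. Commuting $\n_{\bar i}$ past $\n_{a}$ now produces $-\C_{a\bar i}\ci\n_{\bar a}A$, of type $\C*\nb A$. Commuting $\n_{\bar i}$ past $\n_{\bar a}$ produces the torsion term $-T_{\bar i\bar a}^{\bar b}\n_{\bar b}A$, on which the outer $\n_{a}$ then acts; the Leibniz rule yields $(\n_{a}T_{\bar i\bar a}^{\bar b})\n_{\bar b}A+T_{\bar i\bar a}^{\bar b}\n_{a}\n_{\bar b}A$, and here I would invoke the first Bianchi identity in the form $\n\Tb*1=\C*1$ to rewrite the differentiated-torsion factor $\n_{a}\Tb$ as curvature, so that the first term is again of type $\C*\nb A$ while the second is $\n\nb A*\Tb$. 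Gathering these gives $[\nb,\D]A=\C*\nb A+\n\nb A*\Tb$. The third identity is then immediate: since $D=\n+\nb$, I add the first two and regroup, using $\C*\n A+\C*\nb A=\C*DA$, absorbing $\n\nb A*T+\n\nb A*\Tb$ into $D^{2}A*T$, and $\n\C*A$ into $D\C*A$, to obtain $[D,\D]A=D\C*A+\C*DA+D^{2}A*T$.

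The whole argument is essentially bookkeeping, and the only step that requires real attention is keeping straight \emph{which} commutators generate curvature versus torsion: purely holomorphic and purely antiholomorphic commutators contribute torsion, the mixed ones contribute curvature. The single honest obstacle is the appearance of $\n\Tb$ in the second identity, which does not fit the claimed schematic form until it is converted into a curvature term by the first Bianchi identity. Because the statement records only the $*$-type of each term, I never need to track exact coefficients or possible cancellations, so this conversion is the one place where a structural identity—rather than mere term-counting—is indispensable.
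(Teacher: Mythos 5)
Your proposal is correct and takes essentially the same route as the paper's proof: both rest on the commutator identity $\n^{2}_{xy}-\n^{2}_{yx}=\C_{xy}-\n_{T(x,y)}$ (curvature only in the mixed case, torsion only in the pure holomorphic/antiholomorphic cases), the Leibniz rule on the outer derivative, and—crucially, just as in the paper—the first Bianchi identity to convert the $\n\Tb$ term arising in the second identity into curvature. The third identity is obtained in both arguments by combining the first two.
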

\begin{proof}
Firstly, we claim that
\begin{align}\label{eq-2ndDerivativeExchange}
\n^{2}_{xy}-\n^{2}_{yx}=\C_{xy}-\n_{T(x,y)}.
\end{align}
Recall that
\begin{align*}
\n^{2}_{xy}=\n_{x}\n_{y}-\n_{\n_{x}y}
\end{align*}
and then by direct computation, we have
\begin{align*}
\n^{2}_{xy}-\n^{2}_{yx}
&=\n_{x}\n_{y}-\n_{\n_{x}y}-\n_{y}\n_{x}+\n_{\n_{y}x}
\\
&=\C_{xy}+\n_{[x,y]}-\n_{\n_{x}y}+\n_{\n_{y}x}
\\
&=\C_{xy}-\n_{T(x,y)}.
\end{align*}
We will use the notation defined in Definition \ref{def-circle}.
\begin{align*}
(\n^{3}A)_{a\bar{a}s}
&=(\n_{a}\n^{2}A)_{\bar{a}s}
=\n_{a}(\n^{2}_{\bar{a}s}A)-\n^{2}_{\bar{a}(\n_{a}s)}A
\\
&=\n_{a}\left(\n^{2}_{s\bar{a}}A-(\C\ci A)_{s\bar{a}}\right)
	-\n^{2}_{\bar{a}(\n_{a}s)}A
\\
&=(\n^{3}A)_{as\bar{a}}
	+\n^{2}_{(\n_{a}s)\bar{a}}A
	-((\n_{a}\C)\ci A)_{s\bar{a}}
	-(\C\ci (\n_{a}A))_{s\bar{a}}
	\q
	-(\C\ci A)_{(\n_{a}s)\bar{a}}
	-\n^{2}_{\bar{a}(\n_{a}s)}A
\\
&=(\n^{3}A)_{as\bar{a}}
	-((\n_{a}\C)\ci A)_{s\bar{a}}
	-(\C\ci (\n_{a}A))_{s\bar{a}}
\end{align*}
And from equation \eqref{eq-2ndDerivativeExchange}, we get
\begin{align*}
\n^{2}_{as}\nb A
&=\n^{2}_{sa}\nb A-T_{as}^{b}\n_{b}\nb A
\end{align*}
So
\begin{align*}
(\n^{3}A)_{a\bar{a}s}
&=(\n^{3}A)_{sa\bar{a}}-T_{as}^{b}\n_{b}\n_{\bar{a}}A
	-((\n_{a}\C)\ci A)_{s\bar{a}}
	-(\C\ci (\n_{a}A))_{s\bar{a}}
\end{align*}
Taking trace we obtain
\begin{align*}
[\n,\D]A=\n\C*A+\C*\n A+\n\nb A*T
\end{align*}
Similarly,
\begin{align*}
(\n^{3}A)_{a\bar{a}\bar{k}}
&=(\n_{a}\n^2A)_{\bar{a}\bar{k}}
=\n_{a}(\n^2_{\bar{a}\bar{k}}A)
=\n_{a}(\n^2_{\bar{k}\bar{a}}A-T_{\bar{a}\bar{k}}^{\bar{b}}\n_{\bar{b}}A)
\\
&=(\n^{3}A)_{a\bar{k}\bar{a}}
	-\n_{a}T_{\bar{a}\bar{k}}^{\bar{b}}\n_{\bar{b}}A
	-T_{\bar{a}\bar{k}}^{\bar{b}}\n_{a}\n_{\bar{b}}A
\\
&=(\n^{3}A)_{a\bar{k}\bar{a}}
	-(\C_{a\bar{k}b\bar{a}}-\C_{a\bar{a}b\bar{k}})\n_{\bar{b}}A
	-T_{\bar{a}\bar{k}}^{\bar{b}}\n_{a}\n_{\bar{b}}A
\end{align*}
The last line we use the first Bianchi identity.
Notice that
\begin{align*}
\n^{2}_{a\bar{k}}\nb A
&=\n^{2}_{\bar{k}a}\nb A+\C_{a\bar{k}}\ci\nb A.
\end{align*}
Thus
\begin{align*}
[\nb,\D]A=\C*\nb A+\n\nb A*\Tb
\end{align*}
and
\begin{align*}
[D,\D]A=D\C*A+\C*DA+D^2A*T
\end{align*}
\end{proof}

Then we make a remark on the higher order derivative of torsion.
Using the first Bianchi identity, we find that 
\begin{align*}
DT*1=\n T*1+\nb T*1
=\n T*1+\C*1
\end{align*}		
Similarly, for the second order derivative, we have
\begin{align*}
D^2T*1=\n^2T*1+D\C*1+\C*T
\end{align*}
In fact, the higher order derivative of torsion $D^mT$ can be expressed as the sum of \lq\lq{}partial\rq\rq{} derivative $\n^mT$ and lower derivative of torsion and curvature.
Formally, we have

\begin{lemma}\label{lem-torsion-totalhigherDerivativeAndPartial}
We have
\begin{align*}
D^{m}T*1
&=\n^{m}T*1+D^{m-1}\C*1+D^{m-2}(\C*T)
\end{align*}
for $m\geq2$.
\end{lemma}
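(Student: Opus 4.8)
The plan is to argue by induction on $m$, taking the case $m=2$ recorded above as the base, and to pass from $D^mT$ to $D^{m+1}T$ by applying the full derivative $D=\n+\nb$ to the inductive hypothesis. The three tools I would rely on are: the first Bianchi identity in the schematic form $\nb T*1=\C*1$; the mixed commutator of Lemma \ref{lem-ex-LaplacianAndConj}, namely $(\n_p\nb_{\bar q}-\nb_{\bar q}\n_p)A=\C_{p\bar q}\ci A$, which reads schematically as $[\n,\nb]A=\C*A$; and metric-compatibility of the Chern connection, $Dg=0$, so that $D$ commutes with the metric contractions hidden in the notation $*1$, i.e. $D(A*1)=(DA)*1$.

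Assume the identity holds for some $m\ge 2$. Applying $D$ to both sides and using $D(A*1)=(DA)*1$ together with $D\bigl(D^{m-1}\C*1\bigr)=D^m\C*1$ and $D\bigl(D^{m-2}(\C*T)\bigr)=D^{m-1}(\C*T)$, I obtain
\begin{align*}
D^{m+1}T*1
&=D\bigl(\n^m T\bigr)*1+D^m\C*1+D^{m-1}(\C*T).
\end{align*}
Splitting $D=\n+\nb$ gives $D(\n^m T)=\n^{m+1}T+\nb\n^m T$, so the whole problem reduces to showing that the single stray term $\nb\n^m T$ falls into the two allowed buckets, i.e.
\begin{align*}
\nb\n^m T*1=D^m\C*1+D^{m-1}(\C*T).
\end{align*}

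To establish this I would push the lone $\nb$ rightward through the $m$ copies of $\n$. Each time $\nb$ crosses one $\n$ the commutator produces an undifferentiated curvature factor, schematically $\nb\n X=\n\nb X+\C*X$; iterating with $X=\n^{m-1}T,\n^{m-2}T,\dots$ yields
\begin{align*}
\nb\n^m T=\n^m(\nb T)+\sum_{j=0}^{m-1}\n^{j}\bigl(\C*\n^{m-1-j}T\bigr).
\end{align*}
For the leading term I apply the first Bianchi identity $\nb T*1=\C*1$ and then $m$ further derivatives to get $\n^m(\nb T)*1=\n^m\C*1$, which is one of the $D^m\C*1$ terms. For each term in the sum the Leibniz rule distributes the $j$ outer derivatives between $\C$ and $\n^{m-1-j}T$, and every resulting monomial is a contraction of $\n^a\C$ with $\n^bT$ with $a+b=m-1$, hence exactly of type $D^{m-1}(\C*T)$. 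Collecting everything gives the displayed reduction and closes the induction.

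The main obstacle, and the only place needing genuine care, is the bookkeeping of derivative orders while commuting $\nb$ through $\n^m$: I must check that the curvature-commutator terms and the Leibniz terms all land with precisely the right total number of derivatives (namely $m$ on a single $\C$, or $m-1$ split across one $\C$ and one $T$), so that they are absorbed into $D^m\C*1$ and $D^{m-1}(\C*T)$ rather than leaving an uncontrolled remainder. I would also note that forming $\n^{m+1}T$ by prepending the new derivative on the left means I never need to reorder the existing $(1,0)$-derivatives among themselves; this avoids introducing torsion-commutator terms of the unwanted type $T*\n^{m-1}T$ and keeps the argument purely schematic.
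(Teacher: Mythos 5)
Your proposal is correct and takes essentially the same route as the paper: induction on $m$ from the base case $m=2$, applying $D$ to the inductive hypothesis, and reducing everything to the key identity $\nb\n^{m}T*1=\n^{m}\C*1+\n^{m-1}(\C*T)$, which is precisely the paper's equation \eqref{eq-nbnmTorsion}. The only difference is presentational: the paper proves that sub-identity by a separate induction, whereas you unroll the commutator $[\nb,\n]A=\C*A$ iteratively through the $m$ copies of $\n$ and invoke the first Bianchi identity on the innermost factor --- the same computation in a different wrapper.
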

\begin{proof}
First of all, we claim that
\begin{align}\label{eq-nbnmTorsion}
\nb\n^{m}T*1=\n^{m}\C*1+\n^{m-1}(\C*T)
\end{align}
We prove this claim by induction on $m$.
Exchanging the order of derivative and applying the first Bianchi identity, we can prove the case of $m=1$.
Then we assume equation \eqref{eq-nbnmTorsion} is true for $m$.
By direct computation,
\begin{align*}
\nb\n^{m+1}T*1
&=\C*\n^{m}T+\n\nb\n^{m}T*1
\\
&=\C*\n^{m}T+\n(\n^{m}\C*1+\n^{m-1}(\C*T))
\\
&=\n^{m+1}\C*1+\n^{m}(\C*T)
\end{align*}
So we prove our claim.

Then we prove this lemma by induction on $m$.
The case of $m=2$ have been checked.
By the induction hypothesis,
\begin{align*}
D^{m+1}T*1
&=D(\n^{m}T*1+D^{m-1}\C*1+D^{m-2}(\C*T))
\\
&=\n^{m+1}T*1+\nb\n^{m}T*1
	+D^{m}\C*1+D^{m-1}(\C*T)
\\
&=\n^{m+1}T*1+D^{m}\C*1+D^{m-1}(\C*T)
\end{align*}
The last line is because of equation \eqref{eq-nbnmTorsion}.
\end{proof}

Now we give evolution equations of higher order derivative of curvature and torsion.
And we leave the calculation in the appendix.

\begin{lemma}\label{lem-Ev-higherDerivativeCurvature}
We have
\begin{align*}
\heat D^{m}\C
&=\sum_{k=1}^{m+1}D^{k}\C*D^{m+1-k}T
	+\sum_{k=1}^{m+1}\n^{k}T*\nb^{m+2-k}\Tb
	+D^{m}\left(\C^{*2}+\C*T^{*2}\right)
\end{align*}
for $m\geq0$.
\end{lemma}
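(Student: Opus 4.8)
The plan is to argue by induction on $m$, taking the evolution equation of Lemma \ref{lem-EvCurvature} as the base case and promoting it one derivative at a time. For $m=0$ one checks directly that the five schematic terms $\n\C*\Tb$, $\nb\C*T$, $\C*\C$, $\C*T*\Tb$, $\n T*\ov{\n T}$ produced by Lemma \ref{lem-EvCurvature} land in the three families on the right-hand side: the first two are instances of $D\C*T$ (the $k=1$ term of the first sum, since $D\C=\n\C+\nb\C$ and $\Tb$ counts as $T$ under $*$), the last is exactly the $k=1$ term $\n T*\nb\Tb$ of the second sum, and $\C*\C+\C*T*\Tb$ is $D^0(\C^{*2}+\C*T^{*2})$.

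For the inductive step I would commute one copy of $D$ through the heat operator using
\begin{align*}
\heat D=D\heat+[\heat,D],
\end{align*}
so that $\heat D^{m+1}\C=D(\heat D^m\C)+[\heat,D](D^m\C)$. The commutator is computed from Lemmas \ref{lem-exdtAndconnection} and \ref{lem-ex-LaplaceAndconnection}: since $[\heat,D]=[\dt,D]+[D,\D]$, one obtains schematically
\begin{align*}
[\heat,D]A=D\C*A+\C*DA+D^2A*T+DT*T*A.
\end{align*}
Applied to $A=D^m\C$, each resulting term has total derivative count $m+1$ and is a product of the basic tensors: $D\C*D^m\C$ and $\C*D^{m+1}\C$ feed the block $D^{m+1}(\C^{*2})$, the term $D^{m+2}\C*T$ is the top ($k=m+2$) entry of the mixed sum, and $DT*T*D^m\C$ feeds $D^{m+1}(\C*T^{*2})$.

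It then remains to distribute $D$ over the induction hypothesis by the Leibniz rule. Applying $D$ to $\sum_{k=1}^{m+1}D^k\C*D^{m+1-k}T$ raises one factor's order and, after reindexing, yields exactly $\sum_{k=1}^{m+2}D^k\C*D^{m+2-k}T$. Applying $D$ to the second block is the delicate point: $D(\n^kT*\nb^{m+2-k}\Tb)$ splits into the two pure pieces $\n^{k+1}T*\nb^{m+2-k}\Tb$ and $\n^kT*\nb^{m+3-k}\Tb$, which reassemble into $\sum_{k=1}^{m+2}\n^kT*\nb^{m+3-k}\Tb$, together with the two mixed pieces $\nb\n^kT*\nb^{m+2-k}\Tb$ and $\n^kT*\n\nb^{m+2-k}\Tb$. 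Here the key is to invoke equation \eqref{eq-nbnmTorsion} and its conjugate, which convert $\nb\n^kT$ into $\n^k\C+\n^{k-1}(\C*T)$ and $\n\nb^{j}\Tb$ into $\nb^{j}\C+\nb^{j-1}(\C*\Tb)$; the curvature parts then migrate into the mixed sum $D^\bullet\C*D^\bullet T$ and the three-factor parts into $D^{m+1}(\C*T^{*2})$. Finally $D$ applied to $D^m(\C^{*2}+\C*T^{*2})$ is literally $D^{m+1}(\C^{*2}+\C*T^{*2})$.

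The main obstacle is bookkeeping the refined structure of the second sum. The distinction between $\n^kT*\nb^{m+2-k}\Tb$ (holomorphic derivatives on $T$, antiholomorphic on $\Tb$) and the coarser $D^k\C*D^{m+1-k}T$ must be preserved, because a naive $D$ creates $\nb$-derivatives of $T$, and only the first Bianchi identity (through \eqref{eq-nbnmTorsion}) lets us trade those for curvature so that they leave the pure-torsion block. Verifying that precisely these Bianchi-generated curvature terms carry the right total derivative order to fall into the families already present, and that the index ranges of both sums close up to $k=m+2$, is the heart of the computation; everything else is the Leibniz rule with the schematic $*$ absorbing constants and contractions.
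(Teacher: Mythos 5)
Your proposal is correct and follows essentially the same route as the paper's own proof: induction on $m$ with Lemma \ref{lem-EvCurvature} as the base case, the commutator $[\heat,D]A=(D\C+DT*T)*A+\C*DA+T*D^2A$ assembled from Lemmas \ref{lem-exdtAndconnection} and \ref{lem-ex-LaplaceAndconnection}, Leibniz distribution over the induction hypothesis, and equation \eqref{eq-nbnmTorsion} (with its conjugate) to convert the mixed $\nb\n^kT$ and $\n\nb^{j}\Tb$ terms into curvature. Your bookkeeping of where those Bianchi-generated terms land (two-factor pieces into the sum $D^{\bullet}\C*D^{\bullet}T$, three-factor pieces into $D^{m+1}(\C*T^{*2})$) is in fact slightly more precise than the paper's own parenthetical remark, but it is the same argument.
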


\begin{lemma}\label{lem-ev-highOrderDerivativeTorsion}
We have
\begin{align*}
\heat\n^{m}T
&=\n^{m}(\C*T+T^{*2}*\Tb)+\n^{m+1}(T*\Tb)
\end{align*}
for $m\geq0$.
\end{lemma}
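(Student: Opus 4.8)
The plan is to argue by induction on $m$, commuting the heat operator $\heat$ past a single factor of $\n$ at each step and absorbing the resulting commutator terms into the claimed schematic right-hand side. For the base case $m=0$ I would start from Lemma \ref{lem-EvTorsion}, namely $\heat T = \C*T + \n T*\Tb + T*T*\Tb$, and check that it agrees with the stated form $\C*T + T^{*2}*\Tb + \n(T*\Tb)$: expanding $\n(T*\Tb) = \n T*\Tb + T*\n\Tb$ by Leibniz and using the first Bianchi identity in the form $\n\Tb*1 = \C*1$ to rewrite $T*\n\Tb = \C*T$, the two copies of $\C*T$ collapse into one and reproduce Lemma \ref{lem-EvTorsion} exactly.

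For the inductive step, setting $B=\n^m T$ and using $\heat=\dt-\D$ together with $\dt\n B = \n\dt B + [\dt,\n]B$ and $\D\n B = \n\D B - [\n,\D]B$, I would first record the clean commutation identity
\begin{align*}
\heat\n^{m+1}T = \n\heat(\n^m T) + [\dt,\n]\n^m T + [\n,\D]\n^m T.
\end{align*}
Applying $\n$ to the induction hypothesis turns the first term into $\n^{m+1}(\C*T + T^{*2}*\Tb) + \n^{m+2}(T*\Tb)$, which is already of the required type for $m+1$. It then remains to show that the two commutator terms, expanded via Lemmas \ref{lem-exdtAndconnection} and \ref{lem-ex-LaplaceAndconnection} as
\begin{align*}
[\dt,\n]\n^m T &= \n\C*\n^m T + \C*T*\n^m T + \n T*\Tb*\n^m T,
\\
[\n,\D]\n^m T &= \n\C*\n^m T + \C*\n^{m+1}T + \n\nb\n^m T*T,
\end{align*}
can each be absorbed into $\n^{m+1}(\C*T + T^{*2}*\Tb)$.

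The bookkeeping is routine for every term except the mixed-derivative term $\n\nb\n^m T*T$, which is where the real work lies and which I expect to be the main obstacle. To handle it I would invoke equation \eqref{eq-nbnmTorsion} from the proof of Lemma \ref{lem-torsion-totalhigherDerivativeAndPartial}, namely $\nb\n^m T*1 = \n^m\C*1 + \n^{m-1}(\C*T)$, so that $\n\nb\n^m T*1 = \n^{m+1}\C*1 + \n^m(\C*T)$ and hence $\n\nb\n^m T*T = \n^{m+1}\C*T + \n^m(\C*T)*T$; the first piece is visibly a summand of $\n^{m+1}(\C*T)$. For the remaining two- and three-factor products, such as $\C*T*\n^m T$, $\n T*\Tb*\n^m T$, and $\n^m(\C*T)*T$, I would repeatedly apply the first Bianchi identity $\C*1 = \n\Tb*1$ to trade each curvature factor for one additional $\n$-derivative of $\Tb$, thereby landing every such product inside $\n^{m+1}(T^{*2}*\Tb)$. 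Counting derivatives, a term $\n^i\C*\n^j T*T$ with $i+j=m$ becomes $\n^{i+1}\Tb*\n^j T*T$ of total order $(i+1)+j=m+1$, exactly matching. Thus the crux is converting the genuinely mixed derivative $\nb\n^m T$ into pure $\n$-derivatives of curvature via \eqref{eq-nbnmTorsion}; once that substitution is made, a Leibniz expansion and the first Bianchi identity close the induction.
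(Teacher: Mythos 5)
Your proposal is correct and follows essentially the same route as the paper's proof: induction on $m$, commuting $\heat$ past $\n$ via Lemmas \ref{lem-exdtAndconnection} and \ref{lem-ex-LaplaceAndconnection}, converting the mixed derivative $\nb\n^{m}T$ with equation \eqref{eq-nbnmTorsion}, and absorbing the leftover products into $\n^{m+1}(\C*T+T^{*2}*\Tb)$ using the first Bianchi identity $\n\Tb*1=\C*1$ and a Leibniz expansion. The only cosmetic difference is that you keep the two commutators $[\dt,\n]$ and $[\n,\D]$ separate where the paper merges them into a single $[\heat,\n]$ formula, which changes nothing in substance.
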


\subsection{Higher derivative estimates}
In this subsection, we will give the higher derivative estimates.
Firstly, we state a lemma.

\begin{lemma}\label{lem-ev-norm}
We have
\begin{align*}
\heat|A|^2
&=(\heat A,A)+(A,\heat A)-|DA|^2
	\q
	+(A,Q[A])+(A,S\circ A)-(A,S[A])
\end{align*}
Briefly,
\begin{align*}
\heat|A|^2
&=\heat A*A+\C*A^{*2}+T^{*2}*A^{*2}-|DA|^2
\end{align*}
\end{lemma}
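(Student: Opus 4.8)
The plan is to expand $\heat|A|^2=\dt|A|^2-\D|A|^2$, handle the two pieces separately, and recombine. For the time-derivative piece I would invoke the variation formula recorded just after Definition \ref{def-h[]}, namely $\dt|A|^2=(\dt A,A)+(A,\dt A)+(A,(\dt g)[A])$. Since the operation $h\mapsto h[\cdot]$ is linear in $h$ and the flow gives $\dt g=-S+Q$, we get $(\dt g)[A]=-S[A]+Q[A]$, hence
\[
\dt|A|^2=(\dt A,A)+(A,\dt A)+(A,Q[A])-(A,S[A]).
\]

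For the Laplacian piece I would differentiate $|A|^2=(A,A)$ twice, using that the Chern connection is compatible with the Hermitian metric. Applying $\n_{\bar a}$ first yields $\n_{\bar a}|A|^2=(\n_{\bar a}A,A)+(A,\n_a A)$, where the second term records the $(1,0)$-derivative landing on the conjugated slot. Applying $\n_a$ and using metric compatibility once more (summing over $a$) gives
\[
\D|A|^2=(\D A,A)+(A,\Db A)+|\n A|^2+|\nb A|^2.
\]
Because $\n A$ and $\nb A$ live in tensor bundles of complementary bidegree, they are orthogonal, so $|\n A|^2+|\nb A|^2=|DA|^2$ by the very definition $D=\n+\nb$.

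The one genuinely nontrivial input is converting the ``conjugation'' Laplacian $\Db$ back to $\D$: by Lemma \ref{lem-ex-LaplacianAndConj} we have $(\D-\Db)A=S\ci A$, so $(A,\Db A)=(A,\D A)-(A,S\ci A)$. Substituting and then subtracting $\D|A|^2$ from $\dt|A|^2$, the first-order contractions assemble into the heat-operator terms $(\heat A,A)+(A,\heat A)$, the gradient term survives as $-|DA|^2$, and the zeroth-order curvature/torsion terms collect to $(A,Q[A])+(A,S\ci A)-(A,S[A])$, which is exactly the claimed identity. The brief schematic form then follows by reading off tensor types: $S$ is a trace of $\C$, so both $(A,S\ci A)$ and $(A,S[A])$ are of type $\C*A^{*2}$, while $Q=T*\Tb$ makes $(A,Q[A])$ of type $T^{*2}*A^{*2}$.

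The main obstacle is bookkeeping rather than depth: one must track the conjugated slot carefully through the two successive uses of metric compatibility, so that the cross terms combine into $|DA|^2$ with the correct sign, and so that the commutator $\D-\Db$ enters with the right orientation. Once that is done the identity is immediate.
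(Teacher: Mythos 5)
Your proposal is correct and follows essentially the same route as the paper: decompose $\heat|A|^2$ into $\dt|A|^2-\D|A|^2$, use the variation formula with $\dt g=-S+Q$, compute $\D|A|^2=(\D A,A)+(A,\Db A)+|\n A|^2+|\nb A|^2$, and convert $\Db$ to $\D$ via Lemma \ref{lem-ex-LaplacianAndConj}. The only difference is that you spell out the metric-compatibility computation of $\D|A|^2$ that the paper states without detail, and your sign bookkeeping is accurate throughout.
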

\begin{proof}
Using notations defined in subsection \ref{subs-notation}, we have
\begin{align*}
\dt|A|^2
&=(\dt A,A)+(A,\dt A)+(A,(-S+Q)[A])
\end{align*}
On the other hand,
\begin{align*}
\D|A|^2
&=(\D A,A)+(A,\Db A)+|\n A|^2+|\nb A|^2
\\
&=(\D A,A)+(A,\D A)-(A,S\ci A)+|DA|^2
\end{align*}
The second line is because of Lemma \ref{lem-ex-LaplacianAndConj}.
Combining above together we obtain
\begin{align*}
\heat|A|^2
&=(\heat A,A)+(A,\heat A)-|DA|^2
	\q
	+(A,Q[A])+(A,S\circ A)-(A,S[A])
\end{align*}
\end{proof}

To give the derivative estimates, we need to slightly modify the standard argument of the Ricci flow (e.g. see \cite{MR2061425,MR2274812}). 

\begin{proof}[Proof of Theorem \ref{thm-BBS-curvature}.]
This proof is by induction on $m$.
For reading convenience, we divide it into four steps.

{\bf Step 1.} Compute evolution equations of $|\C|^2$ and $|T|^2$.

From the pluriclosed condition (Lemma \ref{lem-PCcondition}), we know that $|T|^2\leq c|\C|\leq cK$, where $c$ is a constant depending only on the dimension.
For convenience, we will use $C$ to denote a constant depending only on the dimension $n$ and $\max\{\tau,1\}$ throughout this proof.

From Lemma \ref{lem-EvCurvature} and Lemma \ref{lem-ev-norm}, we know that
\begin{align}
\heat|\C|^2
&\leq C\Big(
	|\C||D\C||T|+|\C||\n T|^2+|\C|^3+|\C|^2|T|^2
	\Big)
	-|D\C|^2\label{eq-ev-curvature2}
\\
&\leq C(K^3+K|\n T|^2)-\frac{1}{2}|D\C|^2\nonumber
\end{align}
We use the Cauchy-Schwarz inequality in the last line.
And similarly, from Lemma \ref{lem-EvTorsion}, we have
\begin{align}
\heat|T|^2
&\leq C\Big(
	|\C||T|^2+|\n T||T|^2+|T|^4
	\Big)
	-|\n T|^2-|\nb T|^2\label{eq-ev-torsion2}
\\
&\leq CK^2-\frac{1}{2}|\n T|^2\nonumber
\end{align}
Let $\alpha$ be a constant determined later.
We define a function by $F_0=|\C|^2+\alpha|T|^2$ and we have
\begin{align*}
\heat F_0
&\leq CK^2(\beta+K)+(CK-\frac{1}{2}\alpha)|\n T|^2
	-\frac{1}{2}|D\C|^2 
\end{align*}
So we can choose $\alpha=2CK+1$ such that
\begin{align}\label{eq-ev-F0}
\heat F_0
&\leq CK^3-\frac{1}{2}(|D\C|^2+|\n T|^2)
\end{align}

{\bf Step 2.} Estimate $|DT|$.

We begin with the evolution equation of $|\n T|^2$.
The first Bianchi identity $\n\Tb*1=\C*1$ will be used in the calculation.
From Lemma \ref{lem-ev-highOrderDerivativeTorsion}, we have
\begin{align*}
\heat|\n T|^2
&\leq C\Big(
	|\n T||\n\C||T|+|\n T|^2|\C|
	+|\n T|^2|T|^2+|\n T||\n\Tb||T|^2
	\q
	+|\n^2T||\n T||T|+|\n T|^2|\n\Tb|+|\n T||\n^2\Tb||T|
	\Big)
	-|\n^2 T|^2-|\nb\n T|^2
\\
&\leq C\Big(
	|\n T||\n\C||T|+|\n T|^2|\C|
	+|\n T|^2|T|^2+|\n T||\C||T|^2
	\q
	+|\n^2T||\n T||T|
	\Big)
	-|\n^2 T|^2-|\nb\n T|^2
\\
&\leq CK^3+CK|\n T|^2
	+\frac{1}{4}|\n\C|^2
	-\frac{3}{4}|\n^2 T|^2
\end{align*}
Let $\beta_1$ be a constant determined later.
We define a function $Z=t|\n T|^2+\beta_1 F_0$ and we have
\begin{align*}
\heat Z
&=|\n T|^2+t\heat|\n T|^2+\beta_1\heat F_0
\\
&\leq CK^3(\beta_1+t)
	+(1+CKt-\frac{1}{2}\beta_1)|\n T|^2
	+(\frac{1}{4}t-\frac{1}{2}\beta_1)|D\C|^2
	-\frac{3}{4}t|\n^2 T|^2
\end{align*}
We use equation \eqref{eq-ev-F0} in the last line.
Since $t\leq \tau/K^2$, we can choose
\begin{align*}
\beta=\max\left\{\frac{2C\tau}{K}+3,\frac{\tau}{K^2}+1
	\right\}
\end{align*}
such that
\begin{align}\label{eq-ev-Z}
\heat Z
&\leq CK^3-\frac{1}{2}(|\n T|^2+t|D\C|^2+t|\n^2T|).
\end{align}
Applying maximum principle to $Z$ and noticing $Z(x,0)=\beta(|\C|^2+\alpha |T|^2)$, we have
\begin{align*}
\max_{x\in M}Z(x,t)
\leq CK^3t+\max_{x\in M}Z(x,0)
\leq CK^2
\end{align*}
So we get $|\n T|\leq \frac{CK}{t^{1/2}}$.
Since $DT*1=\n T*1+\C*1$, we obtain the estimate
\begin{align*}
|DT|\leq \frac{C(n,\tau)K}{t^{1/2}}.
\end{align*}

{\bf Step 3.} Estimate $|D\C|$ and $|D^2T|$.

Firstly, we compute the evolution equation of $|D\C|^2$.
From Lemma \ref{lem-Ev-higherDerivativeCurvature} and Lemma \ref{lem-ev-norm}, we have
\begin{align*}
\heat |D\C|^2
&\leq C\Big(
	|D\C||D^2\C||T|+|D\C|^2|DT|
	+|D\C||\n T||\n^2T|
	\q
	+|D\C|^2|\C|+|D\C|^2|T|^2+|D\C||DT||\C||T|
	\Big)
	-|D^2\C|^2
\\
&\leq C\Big(
	|D\C|^2|T|^2+|D\C|^2(K^{-2}|DT|^2+K^2)
	+K^{-2}|D\C|^2|\n T|^2
	\q
	+|DT|^2|\C|^2
	\Big)
	+K^2|\n^2 T|^2
	-\frac{3}{4}|D^2\C|^2
\\
&\leq C(K^2+K^{-2}|DT|^2)|D\C|^2
	+CK^2|DT|^2
	+K^2|\n^2 T|^2-\frac{3}{4}|D^2\C|^2
\end{align*}
By the estimate of $|DT|$ obtained in Step 2, we have
\begin{align*}
\heat|D\C|^2
&\leq C(K^2+t^{-1})|D\C|^2
	+CK^4t^{-1}
	+K^2|\n^2 T|^2-\frac{3}{4}|D^2\C|^2
\end{align*}
Then we compute the evolution equation of $|\n^2T|^2$.
By Lemma \ref{lem-ev-highOrderDerivativeTorsion} and the first Bianchi identity,
\begin{align*}
\heat|\n^2T|^2
&\leq C\Big(
	|\n^2T||\n^2\C||T|+|\n^2T||\n\C||\n T|+|\n^2T|^2|\C|
	\q
	+|\n^2T|^2|T|^2+|\n^2T||\n^2\Tb||T|^2+|\n^2T||\n T|^2|T|
	+|\n^2T||\n T||\n\Tb||T|
	\q
	+|\n^2T||\n^3T||T|+|\n^2T|^2|\n\Tb|+|\n^2T||\n T||\n^2\Tb|
	\q
	+|\n^2T||\n^3\Tb||T|
	\Big)
	-|\n^3T|^2-|\nb\n^2T|^2
\\
&\leq C\Big(
	|\n^2T||\n^2\C||T|+|\n^2T||\n\C||\n T|+|\n^2T|^2|\C|
	+|\n^2T|^2|T|^2
	\q
	+|\n^2T||\n\C||T|^2
	+|\n^2T||\n T|^2|T|+|\n^2T||\n T||\C||T|
	\q
	+|\n^2T||\n^3T||T|
	\Big)
	-|\n^3T|^2-|\nb\n^2T|^2
\\
&\leq C(K^2+K^{-2}|\n T|^2)|\n^2T|^2
	+C(|\n T|^4+K^2|\n T|^2)
	\q
	+K^2|\n\C|^2
	+\frac{1}{4}|\n^2\C|^2-\frac{3}{4}|\n^3T|^2
\end{align*}
Since $|\n T|\leq CKt^{-1/2}$, we obtain
\begin{align*}
\heat|\n^2T|^2
&\leq C(K^2+t^{-1})|\n^2T|^2
	+CK^4(t^{-2}+t^{-1})
	+K^2|\n\C|^2
	\q
	+\frac{1}{4}|\n^2\C|^2-\frac{3}{4}|\n^3T|^2
\end{align*}

Let $\beta_2$ be a constant determined later.
Define a function
\begin{align*}
F_1=t^2(|D\C|^2+|\n^2T|^2)+\beta_2Z
\end{align*}
and we have
\begin{align*}
\heat F_1
&=2t(|D\C|^2+|\n^2T|^2)+t^2\heat(|D\C|^2+|\n^2T|^2)+\beta_2\heat Z
\\
&\leq CK^4(\beta_2+1+t)
	+t\left(
		C(K^2t+1)-\frac{1}{2}\beta_2
		\right)(|D\C|^2+|\n^2T|^2)
	\q
	-\frac{1}{2}\beta_2|\n T|^2
	-\frac{1}{2}t^2(|D^2\C|^2+|\n^3T|^2)
\end{align*}
Here we use equation \eqref{eq-ev-Z}.
Since $t\leq \tau/K^2$, we can choose $\beta_2=2C(\tau+1)+1$ such that
\begin{align*}
\heat F_1
\leq CK^4-\frac{1}{2}\left(
	|\n T|^2+t(|D\C|^2+|\n^2T|^2)
	+t^2(|D^2\C|^2+|\n^3T|^2)
	\right)
\end{align*}
Similarly, applying the maximum principle and noticing $t\leq\tau/K^2$, we can obtain that
\begin{align*}
\max_{x\in M}F_1(x,t)\leq CK^4t+\max_{x\in M}F_1(x,0)
\leq CK^2
\end{align*}
Then we have
\begin{align*}
|D\C|,|\n^2T|\leq \frac{C(n,\tau)K}{t^{1/2}}
\end{align*}
By Lemma \ref{lem-torsion-totalhigherDerivativeAndPartial}, we obtain
\begin{align*}
|D^2T|\leq |\n^2T|+|D\C|+|\C*T|
\leq\frac{C(n,\tau)K}{t^{1/2}}.
\end{align*}

{\bf Step 4.} Estimate $|D^m\C|$ and $|D^{m+1}T|$.

We complete our proof by induction on $m$.
It is sufficient to prove the next claim.
\\
{\bf Claim.}
We can inductively define a function by 
\begin{align*}
F_m
&=t^{m+1}(|D^{m}\C|^2+|\n^{m+1}T|^2)
	+\ga_m F_{m-1}
,\quad m\geq 1
\end{align*}
such that
\begin{align}\label{eq-claim-condition1}
\heat F_m
&\leq CK^4-\frac{1}{2}\left(
	|\n T|^2
	+\sum_{k=1}^{m+1}t^{k}
	(|D^{k}\C|^2+|\n^{k+1}T|^2)
	\right)
\end{align}
and
\begin{align}\label{eq-claim-condition2}
\max_{x\in M}F_{m}(x,0)\leq CK^2.
\end{align}
Here $\ga_m$ is a constant determined later.
Recall that $F_0$ and $F_1$ have been defined earlier.

First of all, we compute the evolution equation of $|D^{m}\C|^2+|\n^{m+1}T|^2$.
By direct computation, we have
\begin{align*}
\heat|D^{m}\C|^2
&\leq C\Big(
	\sum_{k=1}^{m+1}|D^{m}\C||D^{k}\C||D^{m+1-k}T|
	+\sum_{k=1}^{m+1}|D^{m}\C||\n^kT||\n^{m+2-k}T|
	\q
	+\sum_{k=0}^{m}|D^{m}\C||D^k\C||D^{m-k}\C|
	+\sum_{0\leq k,l\leq m}|D^{m}\C||D^k\C||D^{l}T||D^{m-k-l}T|
	\q
	+|D^{m}\C|^2(|\C|+|T|^2)
	\Big)
	-|D^{m+1}\C|^2
\end{align*}
From the induction hypothesis
\begin{align*}
|D^k\C|\leq CKt^{-(k+1)/2}
,\quad 1\leq k \leq m-1
,\quad
|D^{k}T|\leq CKt^{-k/2}
,\quad 1\leq k \leq m
\end{align*}
we know that
\begin{align*}
\sum_{k=1}^{m-1}|D^{m}\C||D^{k}\C||D^{m+1-k}T|
\leq C|D^{m}\C|K^2t^{-\frac{m+2}{2}}
\leq C(t^{-1}|D^{m}\C|^2+K^4t^{-(m+1)});
\\
\sum_{k=2}^{m}|D^{m}\C||\n^kT||\n^{m+2-k}T|
\leq C|D^{m}\C|K^2t^{-\frac{m+2}{2}}
\leq C(t^{-1}|D^{m}\C|^2+K^4t^{-(m+1)});
\\
\sum_{k=1}^{m-1}|D^{m}\C||D^k\C||D^{m-k}\C|
\leq C|D^{m}\C|K^2t^{-\frac{m+2}{2}}
\leq C(t^{-1}|D^{m}\C|^2+K^4t^{-(m+1)});
\\
\sum_{0\leq k+l \leq m-1}|D^{m}\C||D^k\C||D^{l}T||D^{m-k-l}T|
\leq C|D^{m}\C|K^3t^{-\frac{m+1}{2}}
\leq C(K^2|D^{m}\C|^2+K^4t^{-(m+1)});
\end{align*}
Since $|DT|\leq CKt^{-1/2}$, we get
\begin{align*}
\heat|D^{m}\C|^2
&\leq C\Big(
	K|D^{m}\C|^2
	+|D^{m}\C||D^{m+1}\C||T|
	+|D^{m}\C|^2|DT|
	\q
	+|D^{m}\C||\n^{m+1}T||\n T|
	+t^{-1}|D^{m}\C|^2
	+K^4t^{-m-1}
	\Big)
	-|D^{m+1}\C|^2
\\
&\leq C(K^2+t^{-1})|D^m\C|^2+CK^{4}t^{-(m+1)}
	+K^2|\n^{m+1}T|^2-\frac{3}{4}|D^{m+1}\C|^2
\end{align*}
Here we use inequalities
\begin{align*}
|DT|=|DT|K^{-1}K\leq |DT|^2K^{-2}+K^2\leq t^{-1}+K^2
\end{align*}
and
\begin{align*}
C|D^{m}\C||\n^{m+1}T||\n T|
&=C(K^{-1}|D^{m}\C||\n T|)(K|\n^{m+1}T|)
\\
&\leq Ct^{-1}|D^{m}\C|^2+K^2|\n^{m+1}T|^2
\end{align*}
Similarly, for $|\n^{m+1}T|^2$ we have
\begin{align*}
\heat|\n^{m+1}T|^2
&\leq C\Big(
	\sum_{k=0}^{m+1}|\n^{m+1}T||\n^{k}\C||\n^{m+1-k}T|
	+\sum_{k=0}^{m+2}|\n^{m+1}T||\n^kT||\n^{m+2-k}\Tb|
	\q
	+\sum_{0\leq k,l\leq m+1}
		|\n^{m+1}T||\n^kT||\n^lT||\n^{m+1-k-l}\Tb|
	\q
	+|\n^{m+1}T|^2(|\C|+|T|^2)
	\Big)
	-|\n^{m+2}T|^2-|\nb\n^{m+1}T|^2
\end{align*}

Using the induction hypothesis and noticing the first Bianchi identity $\n\Tb*1=\C*1$, we have
\begin{align*}
&\sum_{k=1}^{m-1}|\n^{m+1}T||\n^{k}\C||\n^{m+1-k}T|
\leq C|\n^{m+1}T|K^2t^{-\frac{m+2}{2}}
\leq C(t^{-1}|\n^{m+1}T|^2+K^4t^{-(m+1)});
\\
&\sum_{k=2}^{m}|\n^{m+1}T||\n^kT||\n^{m+2-k}\Tb|
\leq C|\n^{m+1}T|K^2t^{-\frac{m+2}{2}}
\leq C(t^{-1}|\n^{m+1}T|^2+K^4t^{-(m+1)});
\\
&\sum_{1\leq k+l \leq m}
		|\n^{m+1}T||\n^kT||\n^lT||\n^{m+1-k-l}\Tb|
\leq C(K^2|\n^{m+1}T|^2+K^4t^{-(m+1)}).
\end{align*}
Thus
\begin{align*}
\heat|\n^{m+1}T|^2
&\leq C\Big(
	K|\n^{m+1}T|^2
	+|\n^{m+1}T||\n^{m+1}\C||T|+|\n^{m+1}T||\n^m\C||\n T|
	\q
	+|\n^{m+1}T||\n^{m+2}T||T|+t^{-1}|\n^{m+1}T|^2+K^4t^{-(m+1)}
	\Big)
	\q
	-|\n^{m+2}T|^2-|\nb\n^{m+1}T|^2
\\
&\leq CK^4t^{-(m+1)}
	+C(K^2+t^{-1})|\n^{m+1}T|^2
	+K^2|\n^{m}\C|^2
	\q
	+\frac{1}{4}|\n^{m+1}\C|^2
	-\frac{3}{4}|\n^{m+2}T|^2
\end{align*}
The second inequality is because of
\begin{align*}
C|\n^{m+1}T||\n^m\C||\n T|
&=C(K^{-1}|\n^{m+1}T||\n T|)(K|\n^m\C|)
\\
&\leq Ct^{-1}|\n^{m+1}T|^2+K^2|\n^m\C|^2
\end{align*}
Combining above together and using the induction hypothesis \eqref{eq-claim-condition1} for $F_{m-1}$, we obtain
\begin{align*}
\heat F_m
&=mt^m(|D^m\C|^2+|\n^{m+1}T|^2)
	+t^{m+1}\heat(|D^m\C|^2+|\n^{m+1}T|^2)
	\q
	+\ga_m\heat F_{m-1}
\\
&\leq CK^4(1+\ga_m)
	+t^m\left(
			C(1+K^2t)-\frac{1}{2}\ga_m
		\right)(|D^m\C|^2+|\n^{m+1}T|^2)
	\q
	-\frac{1}{2}\ga_m\left(|\n T|^2+\sum_{k=1}^{m-1}t^{k}
	(|D^{k}\C|^2+|\n^{k+1}T|^2)
	\right)
	\q
	-\frac{1}{2}t^{m+1}(|D^{m+1}\C|^2+|\n^{m+2}T|^2)
\end{align*}
Since $t\leq\tau/K^2$, we can choose $\beta_m=2C(\tau+1)+1$ such that
\begin{align*}
\heat F_m
&\leq CK^4-\frac{1}{2}\left(
	|\n T|^2+\sum_{k=1}^{m+1}t^{k}
	(|D^{k}\C|^2+|\n^{k+1}T|^2)
	\right)
\end{align*}
And by the induction hypothesis \eqref{eq-claim-condition2} for $F_{m-1}$, we know that
\begin{align*}
\max_{x\in M}F_{m}(x,0)\leq CK^2.
\end{align*}
So we prove the claim.
And by maximum principle, we get
\begin{align*}
\max_{x\in M}F_m(x,t)\leq CK^4t+\max_{x\in M}F_m(x,0)
\leq CK^4\frac{\tau}{K^2}+CK^2
\leq CK^2.
\end{align*}
Then we obtain
\begin{align*}
|D^m\C|,|\n^{m+1}T|\leq \frac{CK}{t^{(m+1)/2}}.
\end{align*}
From Lemma \ref{lem-torsion-totalhigherDerivativeAndPartial} and estimates we obtained, we have
\begin{align*}
|D^{m+1}T|
&\leq C(|\n^{m+1}T|+|D^m\C|
	+\sum_{k=0}^{m-1}|D^{k}\C||D^{m-1-k}T|)
\\
&\leq \frac{CK}{t^{(m+1)/2}}+\frac{CK^2}{t^{m/2}}
	=\frac{CK}{t^{(m+1)/2}}+\frac{CK^2t^{1/2}}{t^{(m+1)/2}}
\\
&\leq\frac{CK}{t^{(m+1)/2}}
\end{align*}
The last inequality is because of $t\leq\tau/K^2$.
And we complete this proof by induction.
\end{proof}

As an application, we prove the next result about maximal existence time of pluriclosed flow.

\begin{proof}[Proof of Theorem \ref{thm-maximalExistenceTime}.]
The argument is standard.
By Theorem \ref{thm-BBS-curvature}, we know that if the curvature is bounded on $[0,t_0]$, then the pluriclosed flow can smoothly pass the time point $t_0$.
So we obtain the result stated above since $\tau$ is the maximal existence time.
\end{proof}

\subsection{The surface case}\label{sec-dim2}

We are particularly interested in the pluriclosed flow on complex surfaces due to the geometrization program of complex surfaces introduced by Tian and Streets (see e.g. \cite{MR4181011,MR2755684}).
In this section, we give a torsion estimate using the Chern Ricci curvature in dimension 2.

Firstly, we calculate the evolution equation of $|T|^2$ more precisely.
\begin{lemma}
\begin{align*}
\heat|T|^2
&=4\C_{b\bar{k}j\bar{a}}T_{ia\bar{b}}T_{\bar{i}\bar{j}k}
	+4T_{ib\bar{c}}T_{ja\bar{b}}T_{\bar{a}\bar{k}c}T_{\bar{i}\bar{j}k}
	-2Q_{a\bar{i}}T_{ij\bar{k}}T_{\bar{a}\bar{j}k}
	+Q_{a\bar{k}}T_{ij\bar{a}}T_{\bar{i}\bar{j}k}
	\q
	+2\n_{i}T_{ja\bar{b}}T_{\bar{k}\bar{a}b}T_{\bar{i}\bar{j}k}
	+2\n_{\bar{i}}T_{\bar{j}\bar{a}b}T_{ka\bar{b}}T_{ij\bar{k}}
	-|\n T|^2-|\nb T|^2
\end{align*}
\end{lemma}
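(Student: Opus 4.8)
The plan is to specialize the general norm-evolution identity of Lemma \ref{lem-ev-norm} to $A=T$ and then substitute the explicit torsion evolution of Lemma \ref{lem-EvTorsion}, tracking every coefficient instead of the schematic $*$-notation. Computing in normal coordinates at a fixed point (so that $g_{i\bar{j}}=\delta_{ij}$) and using $\overline{T_{ij\bar{k}}}=T_{\bar{i}\bar{j}k}$, the starting point is
\begin{align*}
\heat|T|^2=(\heat T,T)+(T,\heat T)-|\n T|^2-|\nb T|^2+(T,Q[T])+(T,S\ci T)-(T,S[T]),
\end{align*}
where I have already used $|DT|^2=|\n T|^2+|\nb T|^2$. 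The whole proof is a careful bookkeeping exercise that collects the curvature, cubic-torsion, gradient, $S$- and $Q$-contributions separately.

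First I would evaluate $(\heat T,T)$ by contracting each of the eight terms of Lemma \ref{lem-EvTorsion} against $T_{\bar{i}\bar{j}k}$. The antisymmetry $T_{ij\bar{k}}=-T_{ji\bar{k}}$ in the holomorphic indices merges the two curvature terms, the two cubic-torsion terms, and the two gradient terms pairwise (relabel $i\leftrightarrow j$ and use the antisymmetry of the barred torsion), so that $(\heat T,T)$ collapses to a curvature term $2\C_{b\bar{k}j\bar{a}}T_{ia\bar{b}}T_{\bar{i}\bar{j}k}$, a cubic-torsion term, a gradient term $2\n_{i}T_{ja\bar{b}}T_{\bar{k}\bar{a}b}T_{\bar{i}\bar{j}k}$, and the leftover $S$- and $Q$-terms. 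Since $(T,\heat T)$ is the complex conjugate of $(\heat T,T)$, I would then verify via the Chern-curvature symmetry $\overline{\C_{i\bar{j}p\bar{q}}}=\C_{j\bar{i}q\bar{p}}$ and suitable relabelings that the curvature, cubic-torsion, and gradient pieces are each conjugation-invariant, so that adding $(T,\heat T)$ doubles their coefficients to the $4$, $4$, and the two mirror-image gradient terms displayed in the statement.

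The decisive bookkeeping is the cancellation of all $S$-terms, and this is the step I expect to be the main obstacle. Conjugation and relabeling turn the $S$-contribution of $(\heat T,T)+(T,\heat T)$ into $-2S_{a\bar{k}}T_{ij\bar{a}}T_{\bar{i}\bar{j}k}$. Separately, evaluating the correction terms of Lemma \ref{lem-ev-norm} through Definitions \ref{def-circle} and \ref{def-h[]} gives $(S\ci T)_{ij\bar{k}}-(S[T])_{ij\bar{k}}=2S_{a\bar{k}}T_{ij\bar{a}}$, hence $(T,S\ci T)-(T,S[T])=2S_{a\bar{k}}T_{ij\bar{a}}T_{\bar{i}\bar{j}k}$, which exactly cancels the former. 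Getting the signs right here is delicate: the $\ci$-operator carries a $+$ sign on antiholomorphic slots while $h[\cdot]$ carries a $-$, and it is precisely this mismatch on the $\bar{k}$ slot — not the holomorphic slots, which agree and drop out — that produces the surviving term. Any error in these sign conventions would leave a spurious Chern-Ricci term and is the most likely place for the computation to go wrong.

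Finally I would assemble the $Q$-terms: the heat part contributes $2Q_{a\bar{k}}T_{ij\bar{a}}T_{\bar{i}\bar{j}k}$, while $(T,Q[T])$, computed from Definition \ref{def-h[]} and simplified with the holomorphic antisymmetry, contributes $-2Q_{i\bar{a}}T_{aj\bar{k}}T_{\bar{i}\bar{j}k}-Q_{a\bar{k}}T_{ij\bar{a}}T_{\bar{i}\bar{j}k}$; adding these and relabeling $i\leftrightarrow a$ in the first yields the two displayed $Q$-terms $-2Q_{a\bar{i}}T_{ij\bar{k}}T_{\bar{a}\bar{j}k}+Q_{a\bar{k}}T_{ij\bar{a}}T_{\bar{i}\bar{j}k}$. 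Combining the curvature, cubic-torsion, $Q$-, gradient-, and Laplacian contributions then gives the asserted identity. I note that this calculation is in fact dimension-independent; the hypotheses proper to complex surfaces are used only in the estimates that follow, for which this precise form of $\heat|T|^2$ is the required input.
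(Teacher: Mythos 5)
Your proposal is correct and takes essentially the same route as the paper's own proof: both specialize Lemma \ref{lem-ev-norm} to $A=T$, substitute the explicit evolution of Lemma \ref{lem-EvTorsion}, merge terms pairwise via the holomorphic antisymmetry of $T$, and then verify from Definitions \ref{def-h[]} and \ref{def-circle} that the $S$-contributions cancel exactly (the sign mismatch on the antiholomorphic slot, as you identify) while the $Q$-terms assemble into the two displayed expressions. Your sign bookkeeping for $(T,S\ci T)-(T,S[T])$, the heat-term contributions, and $(T,Q[T])$ all match the paper's Steps 1--3, so no gap remains.
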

\begin{proof}
From Lemma \ref{lem-ev-norm}, we have
\begin{align*}
\heat|T|^2
&=(\heat T,T)+(T,\heat T)-|DT|^2
	\q
	+(T,Q[T])+(T,S\circ T)-(T,S[T])
\end{align*}

\text{\bf Step 1.} Compute terms in the second line.
By definition,
\begin{align*}
(S[T])_{ij\bar{k}}
&=-S_{i\bar{a}}T_{aj\bar{k}}
	-S_{j\bar{a}}T_{ia\bar{k}}
	-S_{a\bar{k}}T_{ij\bar{a}}
\\
(S\circ T)_{ij\bar{k}}
&=-S_{i\bar{a}}T_{aj\bar{k}}
	-S_{j\bar{a}}T_{ia\bar{k}}
	+S_{a\bar{k}}T_{ij\bar{a}}
\end{align*}
so we have
\begin{align*}
(-S[T]+S\circ T)_{ij\bar{k}}
=2S_{a\bar{k}}T_{ij\bar{a}}
\end{align*}
and
\begin{align*}
(T,-S[T]+S\circ T)
&=2T_{ij\bar{k}}\overline{S_{a\bar{k}}T_{ij\bar{a}}}
=2S_{k\bar{a}}T_{ij\bar{k}}T_{\bar{i}\bar{j}a}.
\end{align*}
Next we compute $(T,Q[T])$.
By definition,
\begin{align*}
(Q[T])_{ij\bar{k}}
&=-Q_{i\bar{a}}T_{aj\bar{k}}
	-Q_{j\bar{a}}T_{ia\bar{k}}
	-Q_{a\bar{k}}T_{ij\bar{a}},
\end{align*}
then we have
\begin{align*}
(T,Q[T])
&=-T_{ij\bar{k}}\left(\ov{Q_{i\bar{a}}T_{aj\bar{k}}
	+Q_{j\bar{a}}T_{ia\bar{k}}
	+Q_{a\bar{k}}T_{ij\bar{a}}}\right)
\\
&=-2Q_{a\bar{i}}T_{ij\bar{k}}T_{\bar{a}\bar{j}k}
	-Q_{k\bar{a}}T_{ij\bar{k}}T_{\bar{i}\bar{j}a}
\end{align*}
Notice that $(T,Q[T])\leq 0$ since $Q$ is non-negative.

\text{\bf Step 2.}
From Lemma \ref{lem-EvTorsion}, we get
\begin{align*}
(\heat T,T)
&=\Big\{-\C_{b\bar{k}i\bar{a}}T_{ja\bar{b}}
	+\C_{b\bar{k}j\bar{a}}T_{ia\bar{b}}
	+T_{ib\bar{c}}T_{\bar{a}\bar{k}c}T_{ja\bar{b}}
	-T_{jb\bar{c}}T_{\bar{a}\bar{k}c}T_{ia\bar{b}}
	\q
	-S_{a\bar{k}}T_{ij\bar{a}}
	+\n_{i}T_{ja\bar{b}}T_{\bar{k}\bar{a}b}
	-\n_{j}T_{ia\bar{b}}T_{\bar{k}\bar{a}b}
	+T_{ij\bar{a}}Q_{a\bar{k}}
	\Big\}T_{\bar{i}\bar{j}k}
\\
&=2\C_{b\bar{k}j\bar{a}}T_{ia\bar{b}}T_{\bar{i}\bar{j}k}
	+2T_{ib\bar{c}}T_{ja\bar{b}}T_{\bar{a}\bar{k}c}T_{\bar{i}\bar{j}k}
	-S_{a\bar{k}}T_{ij\bar{a}}T_{\bar{i}\bar{j}k}
	+Q_{a\bar{k}}T_{ij\bar{a}}T_{\bar{i}\bar{j}k}
	\q
	+2\n_{i}T_{ja\bar{b}}T_{\bar{k}\bar{a}b}T_{\bar{i}\bar{j}k}
\end{align*}
Similarly, we have
\begin{align*}
(T,\heat T)
&=2\C_{b\bar{k}j\bar{a}}T_{ia\bar{b}}T_{\bar{i}\bar{j}k}
	+2T_{ib\bar{c}}T_{ja\bar{b}}T_{\bar{a}\bar{k}c}T_{\bar{i}\bar{j}k}
	-S_{a\bar{k}}T_{ij\bar{a}}T_{\bar{i}\bar{j}k}
	+Q_{a\bar{k}}T_{ij\bar{a}}T_{\bar{i}\bar{j}k}
	\q
	+2\n_{\bar{i}}T_{\bar{j}\bar{a}b}T_{ka\bar{b}}T_{ij\bar{k}}
\end{align*}

\text{\bf Step 3.}
Combining above together, we obtain
\begin{align*}
\heat|T|^2
&=4\C_{b\bar{k}j\bar{a}}T_{ia\bar{b}}T_{\bar{i}\bar{j}k}
	+4T_{ib\bar{c}}T_{ja\bar{b}}T_{\bar{a}\bar{k}c}T_{\bar{i}\bar{j}k}
	-2S_{a\bar{k}}T_{ij\bar{a}}T_{\bar{i}\bar{j}k}
	+2Q_{a\bar{k}}T_{ij\bar{a}}T_{\bar{i}\bar{j}k}
	\q
	+2\n_{i}T_{ja\bar{b}}T_{\bar{k}\bar{a}b}T_{\bar{i}\bar{j}k}
	+2\n_{\bar{i}}T_{\bar{j}\bar{a}b}T_{ka\bar{b}}T_{ij\bar{k}}
	+2S_{k\bar{a}}T_{ij\bar{k}}T_{\bar{i}\bar{j}a}
	\q
	-2Q_{a\bar{i}}T_{ij\bar{k}}T_{\bar{a}\bar{j}k}
	-Q_{k\bar{a}}T_{ij\bar{k}}T_{\bar{i}\bar{j}a}
	-|\n T|^2-|\nb T|^2
\\
&=4\C_{b\bar{k}j\bar{a}}T_{ia\bar{b}}T_{\bar{i}\bar{j}k}
	+4T_{ib\bar{c}}T_{ja\bar{b}}T_{\bar{a}\bar{k}c}T_{\bar{i}\bar{j}k}
	-2Q_{a\bar{i}}T_{ij\bar{k}}T_{\bar{a}\bar{j}k}
	+Q_{a\bar{k}}T_{ij\bar{a}}T_{\bar{i}\bar{j}k}
	\q
	+2\n_{i}T_{ja\bar{b}}T_{\bar{k}\bar{a}b}T_{\bar{i}\bar{j}k}
	+2\n_{\bar{i}}T_{\bar{j}\bar{a}b}T_{ka\bar{b}}T_{ij\bar{k}}
	-|\n T|^2-|\nb T|^2
\end{align*}
\end{proof}

In dimension 2, we can rewrite the evolution equation in a more useful form.

\begin{lemma}\label{lem-torsionDim2}
In dimension 2, we have
\begin{align*}
\heat|T|^2
&=(\Ric+S)_{a\bar{b}}T_{ij\bar{a}}T_{\bar{i}\bar{j}b}
	+\n_{i}|T|^2T_{\bar{i}\bar{j}j}
	+\n_{\bar{i}}|T|^2T_{ij\bar{j}}
	-|T|^4-|\n T|^2-|\nb T|^2
\end{align*}
\end{lemma}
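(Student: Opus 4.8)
The computation is driven by an algebraic identity special to complex dimension two. Since $T_{ij\bar{k}}$ is antisymmetric in the holomorphic indices $i,j$ and these indices take only two values, the torsion is completely determined by its trace. Writing $\tau_{i}:=T_{ij\bar{j}}$ for the torsion trace (contracted with $g$ as in Subsection \ref{subs-notation}), one checks by tracing that in dimension two
\begin{align*}
T_{ij\bar{k}}=g_{j\bar{k}}\tau_{i}-g_{i\bar{k}}\tau_{j}.
\end{align*}
I would first record this identity, together with its covariant version $\n_{l}T_{ij\bar{k}}=g_{j\bar{k}}\n_{l}\tau_{i}-g_{i\bar{k}}\n_{l}\tau_{j}$ (valid since $\n g=0$), and the immediate consequences $|T|^{2}=2|\tau|^{2}$ and $T_{\bar{i}\bar{j}j}=\bar{\tau}_{i}$. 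After substituting these relations, every term in the previous lemma becomes a contraction built only from $\tau$, $\n\tau$, the antiholomorphic derivative $\n_{\bar{m}}\tau_{l}$, and the Chern curvature $\C$.

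The plan is then to feed this substitution into the evolution equation term by term. The two terms $-|\n T|^{2}-|\nb T|^{2}$ are carried over unchanged. The gradient terms $2\n_{i}T_{ja\bar{b}}T_{\bar{k}\bar{a}b}T_{\bar{i}\bar{j}k}$ and its conjugate collapse, once the metric contractions force the undifferentiated torsions to pair up, into contractions of the shape $\n_{i}\tau_{a}\,\bar{\tau}_{a}\bar{\tau}_{i}$; these supply the piece of $\n_{i}|T|^{2}\,T_{\bar{i}\bar{j}j}$ in which the holomorphic trace is differentiated. The complementary piece $\tau_{a}\,\n_{i}\bar{\tau}_{a}\,\bar{\tau}_{i}$ of $\n_{i}|T|^{2}\,T_{\bar{i}\bar{j}j}$ is where the \emph{first Bianchi identity} enters: since $\n_{i}\bar{\tau}_{a}=\overline{\n_{\bar{i}}\tau_{a}}$ and tracing the first Bianchi identity gives $\n_{\bar{m}}\tau_{l}=\C_{j\bar{m}l\bar{j}}-\Ric_{l\bar{m}}$, this piece is curvature in disguise. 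Consequently the gradient terms and the curvature term cannot be matched independently — the antiholomorphic-derivative part of $\n|T|^{2}$ must be reconciled against the genuine curvature contribution.

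The main obstacle is precisely the curvature term $4\C_{b\bar{k}j\bar{a}}T_{ia\bar{b}}T_{\bar{i}\bar{j}k}$. Substituting the dimension-two identity for both torsions traces off two of the four curvature indices and leaves, besides the second Chern Ricci contraction $S_{a\bar{b}}\tau_{b}\bar{\tau}_{a}$, several \emph{non-standard} traces such as $\C_{a\bar{k}k\bar{a}}$, $\C_{a\bar{i}j\bar{a}}$ and $\C_{i\bar{k}k\bar{a}}$ that are neither $\Ric$ nor $S$. To convert them I would invoke the pluriclosed condition (Lemma \ref{lem-PCcondition}): tracing the relation $\C_{i\bar{j}p\bar{q}}-\C_{p\bar{j}i\bar{q}}-\C_{i\bar{q}p\bar{j}}+\C_{p\bar{q}i\bar{j}}=g^{\bar{b}a}T_{ip\bar{b}}T_{\bar{j}\bar{q}a}$ over one or both index pairs expresses each non-standard trace as a combination of $\Ric$, $S$ and a torsion quartic. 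For instance the fully traced relation yields the double trace $\C_{a\bar{k}k\bar{a}}=s-|\tau|^{2}$ (writing $s=\C_{a\bar{a}p\bar{p}}$ for the scalar curvature, so $\tr\Ric=\tr S=s$), whence the scalar part of the curvature term becomes $4s|\tau|^{2}-|T|^{4}$, matching $|\tau|^{2}\tr(\Ric+S)$ and contributing to the $-|T|^{4}$ term.

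Finally I would assemble everything. The $\Ric$- and $S$-parts extracted from the curvature term, together with the curvature supplied by the Bianchi rewriting of the gradient piece, must recombine into exactly $(\Ric+S)_{a\bar{b}}T_{ij\bar{a}}T_{\bar{i}\bar{j}b}$; and all torsion quartics produced along the way — from the explicit terms $4T_{ib\bar{c}}T_{ja\bar{b}}T_{\bar{a}\bar{k}c}T_{\bar{i}\bar{j}k}$, $-2Q_{a\bar{i}}T_{ij\bar{k}}T_{\bar{a}\bar{j}k}$, $Q_{a\bar{k}}T_{ij\bar{a}}T_{\bar{i}\bar{j}k}$ and from the pluriclosed substitutions — must collapse to the single scalar $-|T|^{4}$, again using $|T|^{2}=2|\tau|^{2}$ to reduce each $|\tau|^{4}$-type contraction to one term. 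The delicate point, which I expect to consume most of the work, is that these curvature-quadratic-in-$\tau$ and quartic-in-$\tau$ contributions are distributed across the curvature, gradient, and torsion terms simultaneously, so the bookkeeping has to be carried out globally rather than by matching terms one at a time.
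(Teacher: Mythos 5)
Your plan is correct and follows essentially the same route as the paper's proof: the identity $T_{ij\bar{k}}=g_{j\bar{k}}\tau_{i}-g_{i\bar{k}}\tau_{j}$ is precisely the paper's dimension-two reduction to the components $T_{12\bar{1}}=-\tau_{2}$, $T_{12\bar{2}}=\tau_{1}$, and both arguments then close the computation with the same two tools, the traced first Bianchi identity and the traced pluriclosed condition of Lemma \ref{lem-PCcondition}. Your spot checks (e.g. $\C_{a\bar{k}k\bar{a}}=s-|\tau|^{2}$, the scalar contribution $4s|\tau|^{2}-|T|^{4}$, and the reconciliation of the antiholomorphic part of $\n|T|^{2}$ against curvature) are all consistent with what the paper's component-by-component computation produces, so the difference is purely one of covariant versus coordinate bookkeeping.
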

\begin{proof}
In dimension 2, we will calculate each term of the equation more clearly.
\begin{align*}
\heat|T|^2
&=4\C_{b\bar{k}j\bar{a}}T_{ia\bar{b}}T_{\bar{i}\bar{j}k}
	+4T_{ib\bar{c}}T_{ja\bar{b}}T_{\bar{a}\bar{k}c}T_{\bar{i}\bar{j}k}
	+\Big\{
		-2Q_{a\bar{i}}T_{ij\bar{k}}T_{\bar{a}\bar{j}k}
		+Q_{a\bar{k}}T_{ij\bar{a}}T_{\bar{i}\bar{j}k}
	\Big\}
	\q
	+2\Big\{
		\n_{i}T_{ja\bar{b}}T_{\bar{k}\bar{a}b}T_{\bar{i}\bar{j}k}
	 +\n_{\bar{i}}T_{\bar{j}\bar{a}b}T_{ka\bar{b}}T_{ij\bar{k}}
  \Big\}
	-|\n T|^2-|\nb T|^2
\end{align*}
The first term is
\begin{align*}
4\C_{b\bar{k}j\bar{a}}T_{ia\bar{b}}T_{\bar{i}\bar{j}k}
&=4\C_{b\bar{k}2\bar{2}}T_{12\bar{b}}T_{\bar{1}\bar{2}k}
	+4\C_{b\bar{k}1\bar{1}}T_{21\bar{b}}T_{\bar{2}\bar{1}k}
\\
&=4(\C_{b\bar{k}1\bar{1}}+\C_{b\bar{k}2\bar{2}})
		T_{12\bar{b}}T_{\bar{1}\bar{2}k}
\\
&=4\Ric_{b\bar{k}}T_{12\bar{b}}T_{\bar{1}\bar{2}k}
\\
&=4T_{12\bar{1}}T_{\bar{1}\bar{2}1}
	(\C_{1\bar{1}1\bar{1}}+\C_{1\bar{1}2\bar{2}})
	+4T_{12\bar{1}}T_{\bar{1}\bar{2}2}\Ric_{1\bar{2}}
	\q
	+4T_{12\bar{2}}T_{\bar{1}\bar{2}1}\Ric_{2\bar{1}}
	+4T_{12\bar{2}}T_{\bar{1}\bar{2}2}
	(\C_{2\bar{2}1\bar{1}}+\C_{2\bar{2}2\bar{2}})
\end{align*}
The second term is
\begin{align*}
4T_{ib\bar{c}}T_{ja\bar{b}}T_{\bar{a}\bar{k}c}T_{\bar{i}\bar{j}k}
&=4T_{12\bar{c}}T_{21\bar{2}}T_{\bar{1}\bar{2}c}T_{\bar{1}\bar{2}2}
	+4T_{21\bar{c}}T_{12\bar{1}}T_{\bar{2}\bar{1}c}T_{\bar{2}\bar{1}1}
\\
&=-4|T_{12\bar{c}}|^2(|T_{12\bar{1}}|^2+|T_{12\bar{2}}|^2)
\\
&=-|T|^4
\end{align*}
Terms in the brace in the first line are
\begin{align*}
&-2Q_{a\bar{i}}T_{ij\bar{k}}T_{\bar{a}\bar{j}k}
		+Q_{a\bar{k}}T_{ij\bar{a}}T_{\bar{i}\bar{j}k}
\\
=&-2Q_{1\bar{1}}T_{12\bar{k}}T_{\bar{1}\bar{2}k}
	-2Q_{2\bar{2}}T_{21\bar{k}}T_{\bar{2}\bar{1}k}
		+Q_{a\bar{k}}T_{12\bar{a}}T_{\bar{1}\bar{2}k}
		+Q_{a\bar{k}}T_{21\bar{a}}T_{\bar{2}\bar{1}k}
\\
=&-2|T_{12\bar{k}}|^2(Q_{1\bar{1}}+Q_{2\bar{2}})
	+2T_{ac\bar{d}}T_{\bar{k}\bar{c}d}T_{12\bar{a}}T_{\bar{1}\bar{2}k}
\\
=&-|T|^4
	+2T_{21\bar{d}}T_{\bar{2}\bar{1}d}T_{12\bar{2}}T_{\bar{1}\bar{2}2}
	+2T_{12\bar{d}}T_{\bar{1}\bar{2}d}T_{12\bar{1}}T_{\bar{1}\bar{2}1}
\\
=&-|T|^4+\frac{1}{2}|T|^4
\\
=&-\frac{1}{2}|T|^4
\end{align*}
Then we address the brace in the second line.
Notice that
\begin{align*}
&\n_{i}T_{ja\bar{b}}T_{\bar{k}\bar{a}b}T_{\bar{i}\bar{j}k}
\\
=&\n_{1}T_{21\bar{b}}T_{\bar{2}\bar{1}b}T_{\bar{1}\bar{2}2}
	+\n_{2}T_{12\bar{b}}T_{\bar{1}\bar{2}b}T_{\bar{2}\bar{1}1}
\\
=&\n_{1}(|T_{12\bar{b}}|^2)T_{\bar{1}\bar{2}2}
	-T_{12\bar{b}}\n_{1}T_{\bar{1}\bar{2}b}T_{\bar{1}\bar{2}2}
	+\n_{2}(|T_{12\bar{b}}|^2)T_{\bar{2}\bar{1}1}
	-T_{12\bar{b}}\n_{2}T_{\bar{1}\bar{2}b}T_{\bar{2}\bar{1}1}
\\
=&\frac{1}{2}(\n_{i}|T|^2T_{\bar{i}\bar{j}j})
	-T_{12\bar{b}}\n_{1}T_{\bar{1}\bar{2}b}T_{\bar{1}\bar{2}2}
	-T_{12\bar{b}}\n_{2}T_{\bar{1}\bar{2}b}T_{\bar{2}\bar{1}1}
\end{align*}
Using the first Bianchi identity, we have
\begin{align*}
&-T_{12\bar{b}}\n_{1}T_{\bar{1}\bar{2}b}T_{\bar{1}\bar{2}2}
	-T_{12\bar{b}}\n_{2}T_{\bar{1}\bar{2}b}T_{\bar{2}\bar{1}1}
\\
=&-T_{12\bar{b}}(\C_{1\bar{2}b\bar{1}}-\C_{1\bar{1}b\bar{2}})T_{\bar{1}\bar{2}2}
	-T_{12\bar{b}}(\C_{2\bar{2}b\bar{1}}-\C_{2\bar{1}b\bar{2}})T_{\bar{2}\bar{1}1}
\\
=&-T_{12\bar{1}}		 	
	 (\C_{1\bar{2}1\bar{1}}-\C_{1\bar{1}1\bar{2}})T_{\bar{1}\bar{2}2}
	-T_{12\bar{2}}
	 (\C_{1\bar{2}2\bar{1}}-\C_{1\bar{1}2\bar{2}})T_{\bar{1}\bar{2}2}
	\q
	-T_{12\bar{1}}
	 (\C_{2\bar{2}1\bar{1}}-\C_{2\bar{1}1\bar{2}})T_{\bar{2}\bar{1}1}
	-T_{12\bar{2}}
	 (\C_{2\bar{2}2\bar{1}}-\C_{2\bar{1}2\bar{2}})T_{\bar{2}\bar{1}1}	
\\
=&T_{12\bar{1}}T_{\bar{1}\bar{2}1}
	 (\C_{2\bar{2}1\bar{1}}-\C_{2\bar{1}1\bar{2}})
	 +T_{12\bar{1}}T_{\bar{1}\bar{2}2}
	 (\C_{1\bar{1}1\bar{2}}-\C_{1\bar{2}1\bar{1}})
	 \q
	 +T_{12\bar{2}}T_{\bar{1}\bar{2}1}
	 (\C_{2\bar{2}2\bar{1}}-\C_{2\bar{1}2\bar{2}})
		+T_{12\bar{2}}T_{\bar{1}\bar{2}2}
	 (\C_{1\bar{1}2\bar{2}}-\C_{1\bar{2}2\bar{1}})
\end{align*}
So we get
\begin{align*}
&2\Big\{
		\n_{i}T_{ja\bar{b}}T_{\bar{k}\bar{a}b}T_{\bar{i}\bar{j}k}
	 +\n_{\bar{i}}T_{\bar{j}\bar{a}b}T_{ka\bar{b}}T_{ij\bar{k}}
  \Big\}
\\
=&\n_{i}|T|^2T_{\bar{i}\bar{j}j}
	+\n_{\bar{i}}|T|^2T_{ij\bar{j}}
	\q
	+2T_{12\bar{1}}T_{\bar{1}\bar{2}1}
	 (\C_{2\bar{2}1\bar{1}}-\C_{2\bar{1}1\bar{2}}
	 	+\C_{2\bar{2}1\bar{1}}-\C_{1\bar{2}2\bar{1}})
	+2T_{12\bar{1}}T_{\bar{1}\bar{2}2}
	(\C_{1\bar{1}1\bar{2}}-\C_{1\bar{2}1\bar{1}}
	+\C_{2\bar{2}1\bar{2}}-\C_{1\bar{2}2\bar{2}})
	\q
	+2T_{12\bar{2}}T_{\bar{1}\bar{2}1}
	 (\C_{2\bar{2}2\bar{1}}-\C_{2\bar{1}2\bar{2}}
	 	+\C_{1\bar{1}2\bar{1}}-\C_{2\bar{1}1\bar{1}})
	+2T_{12\bar{2}}T_{\bar{1}\bar{2}2}
	 (\C_{1\bar{1}2\bar{2}}-\C_{1\bar{2}2\bar{1}}
	 +\C_{1\bar{1}2\bar{2}}-\C_{2\bar{1}1\bar{2}})
\\
=&\n_{i}|T|^2T_{\bar{i}\bar{j}j}
	+\n_{\bar{i}}|T|^2T_{ij\bar{j}}
	\q
	+2T_{12\bar{1}}T_{\bar{1}\bar{2}1}
	 (\C_{2\bar{2}1\bar{1}}
	 -\C_{1\bar{1}2\bar{2}}+T_{12\bar{c}}T_{\bar{1}\bar{2}c})
	+2T_{12\bar{1}}T_{\bar{1}\bar{2}2}
	(S_{1\bar{2}}-\Ric_{1\bar{2}})
	\q
	+2T_{12\bar{2}}T_{\bar{1}\bar{2}1}
	 (S_{2\bar{1}}-\Ric_{2\bar{1}})
	+2T_{12\bar{2}}T_{\bar{1}\bar{2}2}
	 (\C_{1\bar{1}2\bar{2}}
	 -\C_{2\bar{2}1\bar{1}}+T_{12\bar{c}}T_{\bar{1}\bar{2}c})
\\
=&\n_{i}|T|^2T_{\bar{i}\bar{j}j}
	+\n_{\bar{i}}|T|^2T_{ij\bar{j}}
	+\frac{1}{2}|T|^4
	\q
	+2T_{12\bar{1}}T_{\bar{1}\bar{2}1}
	 (\C_{2\bar{2}1\bar{1}}-\C_{1\bar{1}2\bar{2}})
	+2T_{12\bar{1}}T_{\bar{1}\bar{2}2}
	(S_{1\bar{2}}-\Ric_{1\bar{2}})
	\q
	+2T_{12\bar{2}}T_{\bar{1}\bar{2}1}
	 (S_{2\bar{1}}-\Ric_{2\bar{1}})
	+2T_{12\bar{2}}T_{\bar{1}\bar{2}2}
	 (\C_{1\bar{1}2\bar{2}}-\C_{2\bar{2}1\bar{1}})
\end{align*}
The last equality is because of the pluriclosed condition (Lemma \ref{lem-PCcondition}).
Combining all terms above together, we obtain
\begin{align*}
\heat|T|^2
&=\n_{i}|T|^2T_{\bar{i}\bar{j}j}
	+\n_{\bar{i}}|T|^2T_{ij\bar{j}}
	-|T|^4-|\n T|^2-|\nb T|^2
	\q
	+2T_{12\bar{1}}T_{\bar{1}\bar{2}1}
	 (\Ric_{1\bar{1}}+\C_{1\bar{1}1\bar{1}}+\C_{2\bar{2}1\bar{1}})
	+2T_{12\bar{1}}T_{\bar{1}\bar{2}2}
	(S_{1\bar{2}}+\Ric_{1\bar{2}})
	\q
	+2T_{12\bar{2}}T_{\bar{1}\bar{2}1}
	 (S_{2\bar{1}}+\Ric_{2\bar{1}})
	+2T_{12\bar{2}}T_{\bar{1}\bar{2}2}
	 (\Ric_{2\bar{2}}+\C_{1\bar{1}2\bar{2}}+\C_{2\bar{2}2\bar{2}})
\\
&=\n_{i}|T|^2T_{\bar{i}\bar{j}j}
	+\n_{\bar{i}}|T|^2T_{ij\bar{j}}
	-|T|^4-|\n T|^2-|\nb T|^2
	\q
	+2T_{12\bar{1}}T_{\bar{1}\bar{2}1}
	 (\Ric_{1\bar{1}}+S_{1\bar{1}})
	+2T_{12\bar{1}}T_{\bar{1}\bar{2}2}
	(\Ric_{1\bar{2}}+S_{1\bar{2}})
	\q
	+2T_{12\bar{2}}T_{\bar{1}\bar{2}1}
	 (\Ric_{2\bar{1}}+S_{2\bar{1}})
	+2T_{12\bar{2}}T_{\bar{1}\bar{2}2}
	 (\Ric_{2\bar{2}}+S_{2\bar{2}})
\\
&=\n_{i}|T|^2T_{\bar{i}\bar{j}j}
	+\n_{\bar{i}}|T|^2T_{ij\bar{j}}
	-|T|^4-|\n T|^2-|\nb T|^2
	\q
	+(\Ric+S)_{a\bar{b}}T_{ij\bar{a}}T_{\bar{i}\bar{j}b}
\end{align*}
\end{proof}

Now, we give the proof of Theorem \ref{thm-HSdim2-T2}.
\begin{proof}[Proof of Theorem \ref{thm-HSdim2-T2}.]
From Lemma \ref{lem-torsionDim2}, we know that there exists a universal constant $c$ such that
\begin{align*}
\heat|T|^2
&\leq cK|T|^2
	+\n_{i}|T|^2T_{\bar{i}\bar{j}j}
	+\n_{\bar{i}}|T|^2T_{ij\bar{j}}
	-|T|^4-|\n T|^2-|\nb T|^2
\end{align*}
Since the manifold $M\times[0,t]$ is compact for any time $t<\tau$, the function $|T|^2(x,t)$ can always achieve its maximum.
If $|T|^2(x,t)$ achieves the maximum at a parabolic interior point $(x_m,t_m)$, then
\begin{align*}
0\leq cK|T|^2(x_m,t_m)-|T|^4(x_m,t_m),
\end{align*}
which implies
\begin{align*}
|T|^2(x,t)\leq|T|^2(x_m,t_m)\leq cK.
\end{align*}
Otherwise, the maximum is achieved at the parabolic boundary.
This implies
\begin{align*}
|T|^2(x,t)\leq \max_{x\in M}|T|^2(x,0)
\end{align*}
So we conclude that
\begin{align*}
|T(x,t)|^2_{g(x,t)}\leq \max\left\{cK,\max_{x\in M}|T(x,0)|^2_{g(x,0)}\right\}.
\end{align*}
\end{proof}

\section{Hermitian-symplectic case}\label{sec-HSflow}
We begin with some facts about Hermitian-symplectic forms (briefly, HS forms).
One can find more details in \cite{2022arXiv221204060Y}.

On a complex manifold $(M^{2n},J)$, a HS form $H$  is a symplectic form, i.e. real and closed nondegenerate two form, with positive (1,1)-part.
We may write it by bi-degree as $H=\me+\p+\ov{\p}$, where $\me$ is the (1,1)-part and $\p$ is the (2,0)-part.
Since $H$ is closed, we have the relationship
\begin{align*}
\dd\p=0
,\quad
\dd\me+\db\p=0.
\end{align*}
Thus the (1,1)-part is a pluriclosed metric.
And for convenience, we say a pluriclosed metric is Hermitian-symplectic if it is the (1,1)-part of some HS forms.
Pluriclosed flow preserves the Hermitian-symplectic condition.
And we can extend it to a flow of HS forms by evolving $\p$ along the (2,0)-part of Bismut Ricci form.
\begin{equation*}
\left\{
\begin{array}{ll}
\dt\me=-\rho^{1,1}(\me)
\\
\dt\p=-\rho^{2,0}(\me)
\end{array}
\right.
\end{equation*}
Or equivalently,
\begin{align*}
\dt H=-\rho(H^{1,1})
\end{align*}
Like Ricci flow or K{\"a}hler Ricci flow, we can consider solitons of this flow.
\begin{definition}
A Hermitian-symplectic soliton is a HS form $H$ satisfying
\begin{align*}
\rho(H^{1,1})=\lambda H+\mathcal{L}_{X}H
\end{align*}
where $\lambda$ is a real constant and $X$ is a holomorphic vector field.
\end{definition}

\begin{remark}
We can write the HS soliton by bidegree as
\begin{align*}
\rho^{1,1}(\me)=\lambda\me+\mathcal{L}_{X}\me
,\quad
\rho^{2,0}(\me)=\lambda\p+\mathcal{L}_{X}\p.
\end{align*}
Thus HS solitons are special solitons of pluriclosed flow that have additional requirement on the (2,0)-part of Bismut Ricci form.
\end{remark}

In this subsection, we will compute the evolution equation of torsion in Hermitian-symplectic case and give a monotontic quantity along pluriclosed flow with Hermitian-symplectic initial data.
As an application, we prove that all Hermitian-symplectic solitons are K{\"a}hler Ricci solitons. 

First of all, we state a lemma given by the authors of \cite{2021arXiv210613716G}.
\begin{lemma}[Proposition 3.24 of \cite{2021arXiv210613716G}]\label{lem-divTorsion}
For a pluriclosed metric, we have
\begin{align*}
\rho^{2,0}(\me)=-\text{\rm div}T
\end{align*}
where $(\text{\rm div}T)_{ij}=-\n_{a}T_{ij\bar{a}}$.
\end{lemma}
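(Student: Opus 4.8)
The plan is to compute the Bismut Ricci form $\rho$ by comparing the Bismut connection with the Chern connection $\n$ and then to extract its $(2,0)$-part. The key structural fact is that the Chern curvature is of pure type $(1,1)$, i.e.\ $\C_{ijp\bar q}=0$ and $\C_{\bar i\bar j p\bar q}=0$, so no $(2,0)$-component of $\rho$ can originate from $\C$ itself; the entire $(2,0)$-part must come from the discrepancy between the two connections. I would therefore write $\n^{B}=\n+A$, where the difference tensor $A=A(T,\Tb)$ is fixed by requiring $\n^{B}$ to be the Hermitian connection with totally skew torsion $H=-d^{c}\me$, and is expressed linearly in $T$ and $\Tb$ through the usual Koszul-type formula for metric connections.

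Rather than carrying the full tensor $A$, I would pass to the anticanonical line bundle $\det T^{1,0}M$. Both $\n$ and $\n^{B}$ induce connections there, differing by the trace one-form $\alpha=\operatorname{tr}A$, so their curvatures differ by $d\alpha$. The Chern-induced curvature is the first Chern--Ricci form $P_{i\bar j}=\Ric_{i\bar j}=\C_{i\bar j a\bar a}=-\dd_{i}\dd_{\bar j}\log\det g$, which is of pure type $(1,1)$ and closed. Hence
\[
\rho^{2,0}=P^{2,0}+(d\alpha)^{2,0}=\dd\bigl(\alpha^{1,0}\bigr),
\]
and the problem reduces to identifying $\alpha^{1,0}$ in terms of the torsion (it should be, up to normalization, the torsion one-form $\theta_{i}=g^{p\bar q}T_{ip\bar q}$) and then antisymmetrizing $\dd$ of it.

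The final and hardest step is to reconcile $\dd\alpha^{1,0}$ with the claimed expression $\n_{a}T_{ij\bar a}$, and this is exactly where the pluriclosed hypothesis is indispensable. Writing the antisymmetrized covariant derivative of $\theta$ and commuting derivatives produces curvature terms; the first Bianchi identity $\n_{\bar j}T_{ip\bar q}=\C_{p\bar j i\bar q}-\C_{i\bar j p\bar q}$ together with the pluriclosed identity of Lemma \ref{lem-PCcondition}, namely $\C_{i\bar j p\bar q}-\C_{p\bar j i\bar q}-\C_{i\bar q p\bar j}+\C_{p\bar q i\bar j}=g^{\bar b a}T_{ip\bar b}T_{\bar j\bar q a}$, are precisely what make the curvature contributions cancel and collapse the two contractions into the single divergence $\n_{a}T_{ij\bar a}=-(\operatorname{div}T)_{ij}$. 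I expect the main obstacle to lie here rather than in the conceptual setup: one must track the curvature and torsion terms generated by each commutation and verify that only after invoking pluriclosedness do they annihilate, leaving the clean identity. A secondary difficulty is bookkeeping the sign and normalization conventions for $H$, $\rho$, and $A$, since the final statement is sensitive to all of these choices.
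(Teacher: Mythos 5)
First, a remark on the comparison itself: the paper does not prove this lemma --- it is quoted verbatim from Proposition 3.24 of \cite{2021arXiv210613716G} --- so your argument can only be judged on its own terms. Your reduction (steps 1--4) is correct and is the standard route: writing $\n^{B}=\n+A$, passing to $\det T^{1,0}M$, using that the Chern--Ricci form is of type $(1,1)$ to get $\rho^{2,0}=\dd(\alpha^{1,0})$ with $\alpha=\operatorname{tr}A$, and indeed one computes $A^{a}_{ij}=-T_{ij}{}^{a}$, so that $\alpha^{1,0}=-\theta$ with $\theta_{i}=g^{\bar{q}p}T_{ip\bar{q}}$.

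The genuine gap is in your final step, and it is a misdiagnosis rather than a bookkeeping issue. Antisymmetrized \emph{first} covariant derivatives never produce curvature terms: for any $1$-form $\beta$ one has $d\beta(X,Y)=(\n_{X}\beta)(Y)-(\n_{Y}\beta)(X)+\beta(T(X,Y))$, so
\begin{align*}
(\dd\theta)_{ij}=\dd_{i}\theta_{j}-\dd_{j}\theta_{i}
=\n_{i}\theta_{j}-\n_{j}\theta_{i}+T_{ij}{}^{a}\theta_{a},
\end{align*}
with only a torsion correction --- there are no curvature contributions waiting to be cancelled. The identity that actually closes the argument is the \emph{all-holomorphic} $(3,0)$-part of the first Bianchi identity (valid because the Chern curvature has no $(2,0)$-component), namely the cyclic sum $\mathfrak{S}_{ijk}\{\n_{i}T_{jk}{}^{l}+T_{ij}{}^{a}T_{ak}{}^{l}\}=0$; contracting $l=k$, the quadratic terms cancel in pairs and one gets exactly
\begin{align*}
\n_{i}\theta_{j}-\n_{j}\theta_{i}+T_{ij}{}^{a}\theta_{a}
=-\n_{a}T_{ij\bar{a}},
\end{align*}
hence $(\rho^{2,0})_{ij}=-(\dd\theta)_{ij}=\n_{a}T_{ij\bar{a}}=-(\text{\rm div}T)_{ij}$. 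The two ingredients you name cannot substitute for this: the Bianchi identity you cite, $\n_{\bar{j}}T_{ip\bar{q}}=\C_{p\bar{j}i\bar{q}}-\C_{i\bar{j}p\bar{q}}$, concerns $(0,1)$-derivatives of $T$, and the pluriclosed identity of Lemma \ref{lem-PCcondition} is algebraic in $\C$ and $T$; neither involves the holomorphic derivatives $\n T$ out of which $\text{\rm div}T$ is built, so no combination of them yields the needed identity. Consequently --- and this is the conceptual point your plan gets wrong --- the pluriclosed hypothesis is not \lq\lq{}indispensable\rq\rq{} here: the identity $\rho^{2,0}=-\text{\rm div}T$ holds for \emph{every} Hermitian metric, and is stated for pluriclosed metrics only because that is the setting of \cite{2021arXiv210613716G}. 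Your secondary worry about sign and $\sqrt{-1}$ normalizations is legitimate (the paper's own conventions absorb such factors, e.g.\ in equation \eqref{eq-TorsionDbphi}), but that is bookkeeping, not the missing idea.
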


Then we can rewrite the evolution equation of $\p$.
\begin{proposition}\label{prop-ev-phi}
The evolution equation of $\p$ is equivalent to
\begin{align*}
\heat\p=0
\end{align*}
\end{proposition}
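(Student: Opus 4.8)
The plan is to verify $\heat\p=0$ by computing the two terms $\dt\p$ and $\D\p$ separately and showing they coincide, with the torsion $T$ serving as the bridge between them. First I would rewrite the parabolic term. By the definition of the extended flow, $\dt\p=-\rho^{2,0}(\me)$, and Lemma \ref{lem-divTorsion} gives $\rho^{2,0}(\me)=-\text{\rm div}T$, so that $\dt\p=\text{\rm div}T$. Unwinding the definition $(\text{\rm div}T)_{ij}=-\n_{a}T_{ij\bar{a}}$, this reads in local coordinates as
\[
(\dt\p)_{ij}=-\n_{a}T_{ij\bar{a}}.
\]
This disposes of the time derivative and reduces the problem to identifying $\D\p$ with $\text{\rm div}T$.

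The key ingredient is the pointwise relation between the $(2,0)$-part $\p$ and the torsion, which comes from the Hermitian-symplectic condition. Since $H=\me+\p+\ov{\p}$ is closed, the $(2,1)$-component of $dH$ vanishes, i.e. $\dd\me+\db\p=0$. Expanding this in local coordinates exactly as in the computation of $\db\me$ and $\dd\db\me$ preceding Lemma \ref{lem-PCcondition}, and using that the antisymmetrization of $\dd\me$ in its two holomorphic indices produces the torsion $T_{ij\bar{a}}$, I would read off the identity $\n_{\bar{a}}\p_{ij}=-T_{ij\bar{a}}$. Here I use that $\p$ is a $(2,0)$-form and the Chern connection carries no mixed Christoffel symbols, so that $\n_{\bar{a}}\p_{ij}=\dd_{\bar{a}}\p_{ij}$; the normalization of $\p_{ij}$ is exactly the one for which Lemma \ref{lem-divTorsion} holds, so the constant in this relation is fixed consistently.

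With this relation in hand, the computation of $\D\p$ is immediate. By definition $\D\p=\n_{a}\n_{\bar{a}}\p$, and substituting $\n_{\bar{a}}\p_{ij}=-T_{ij\bar{a}}$ gives
\[
(\D\p)_{ij}=\n_{a}\n_{\bar{a}}\p_{ij}=-\n_{a}T_{ij\bar{a}},
\]
which is precisely $(\dt\p)_{ij}$ from the first step. Hence $\heat\p=\dt\p-\D\p=0$.

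I expect the main obstacle to be the bookkeeping in the second paragraph: extracting the correct sign and constant in $\n_{\bar{a}}\p_{ij}=-T_{ij\bar{a}}$ from $\dd\me+\db\p=0$, keeping careful track of the $\sqrt{-1}$ factors in the definitions of $\me$ and $\p$ and of the antisymmetrization conventions in the wedge products. Once this normalization is pinned down in agreement with Lemma \ref{lem-divTorsion}, the remaining steps are purely formal.
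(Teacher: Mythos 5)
Your proposal is correct and follows essentially the same route as the paper: both extract the key identity $\n_{\bar{k}}\p_{ij}=-T_{ij\bar{k}}$ from the $(2,1)$-part $\dd\me+\db\p=0$ of $dH=0$, and both invoke Lemma \ref{lem-divTorsion} to identify $\dt\p=-\rho^{2,0}(\me)$ with $\D\p=\n_{a}\n_{\bar{a}}\p$. If anything, your sign bookkeeping is cleaner than the paper's, whose displayed chain $(\rho^{2,0})_{ij}=\n_{a}T_{ij\bar{a}}=-\n_{a}\n_{\bar{a}}\p_{ij}=\D\p_{ij}$ contains a harmless sign slip in the final equality that your two-sided comparison of $\dt\p$ and $\D\p$ avoids.
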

\begin{proof}
Assume $\p=\frac{1}{2}\p_{ij}dz^i\wedge dz^j$ satisfying $\p_{ij}+\p_{ji}=0$.
By direct computation, we have
\begin{align*}
\dd\me=\dd_{i}g_{j\bar{k}}dz^{i}\wedge dz^{j}\wedge d\bar{z}^{k}
,\quad
\db\p=\frac{1}{2}\dd_{\bar{k}}\p_{ij}dz^{i}\wedge dz^{j}\wedge d\bar{z}^{k}
\end{align*}
Since $\dd\me+\db\p=0$, we get
\begin{align}\label{eq-TorsionDbphi}
\n_{\bar{k}}\p_{ij}
&=-\dd_{i}g_{j\bar{k}}+\dd_{j}g_{i\bar{k}}
=-T_{ij\bar{k}}
\end{align}
From Lemme \ref{lem-divTorsion}, we obtain
\begin{align*}
(\rho^{2,0})_{ij}
&=\n_{a}T_{ij\bar{a}}
=-\n_{a}\n_{\bar{a}}\p_{ij}
=\D\p_{ij}
\end{align*}
So we can reformulate the evolution equation of $\p$ as
\begin{align*}
\heat\p=0
\end{align*}
\end{proof}

\begin{remark}
The Proposition 4.10 of \cite{2015arXiv150202584S} gives a similar result for a more general situation than Hermitian-symplectic case.
\end{remark}

The next proposition gives the monotonic quantity.

\begin{proposition}\label{thm-monotonicity}
Let $H(t)=\me(t)+\p(t)+\ov{\p}(t)$ be a solution to the pluriclosed flow with Hermitian-symplectic initial data.
Then either $q(t)=\max\limits_{x\in M}|\p|_{g(t)}(x,t)$ is strictly monotonically decreasing, or $\me(t)$ is a solution to K{\"a}hler Ricci flow.
\end{proposition}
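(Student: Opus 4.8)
The plan is to derive a differential inequality for $|\p|^2$ and feed it into a maximum-principle argument. The starting point is Proposition \ref{prop-ev-phi}, which gives $\heat\p=0$, together with Lemma \ref{lem-ev-norm}. Applying that lemma to $A=\p$ and discarding the terms $(\heat\p,\p)$ and $(\p,\heat\p)$, which vanish, leaves
\begin{align*}
\heat|\p|^2=-|D\p|^2+(\p,Q[\p])+(\p,S\ci\p)-(\p,S[\p]).
\end{align*}
The crucial simplification is that $\p\in T^{2,0}M$, so Remark \ref{re-circAnd[]} applies: for a $(2,0)$-tensor the operators $\ci$ and $[\cdot]$ agree, hence $(\p,S\ci\p)-(\p,S[\p])=0$, and since $Q$ is non-negative the same remark gives $(\p,Q[\p])\le 0$. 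This reduces the identity to $\heat|\p|^2\le -|D\p|^2$.

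Next I would identify the gradient term using the Hermitian-symplectic structure. Writing $D\p=\n\p+\nb\p$, the $(0,1)$-part is computed by equation \eqref{eq-TorsionDbphi}, namely $\n_{\bar{k}}\p_{ij}=-T_{ij\bar{k}}$, so $|\nb\p|^2=|T|^2$. Therefore
\begin{align*}
\heat|\p|^2\le -|D\p|^2\le -|\nb\p|^2=-|T|^2\le 0.
\end{align*}
This is the key inequality: $|\p|^2$ is a subsolution of the heat equation whose defect is measured exactly by $|T|^2$, which vanishes precisely when $\me$ is K\"ahler.

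Finally I would run the maximum principle on $q(t)=\max_{x\in M}|\p|(x,t)$. Since $\heat|\p|^2\le 0$ and the Chern Laplacian of a function is non-positive at a spatial maximum, $q$ is non-increasing. If $q$ fails to be strictly decreasing then, being non-increasing, it must be constant on some interval $[t_1,t_2]$; setting $v=\max_{M}|\p|^2-|\p|^2\ge 0$ yields a supersolution $\heat v\ge |T|^2\ge 0$ that attains its minimum value $0$ at interior points for every $t\in[t_1,t_2]$. The strong maximum principle then forces $v\equiv 0$, hence $\heat|\p|^2\equiv 0$ and therefore $|T|\equiv 0$ on $M\times[t_1,t_2]$. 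Vanishing torsion means $\dd\me=0$, i.e. $\me$ is K\"ahler; then $Q=0$ and $S$ coincides with the Chern--Ricci curvature, so the pluriclosed flow $\dt g=-S+Q$ reduces to $\dt g=-\Ric$, the K\"ahler Ricci flow, and uniqueness identifies $\me(t)$ with that solution. I expect the main obstacle to be this last upgrade: converting the constancy of $q$ on an interval into the pointwise vanishing of $T$ via the strong maximum principle, and verifying that the interior-versus-boundary cases make the dichotomy genuinely exhaustive.
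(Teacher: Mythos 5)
Your proposal is correct and follows essentially the same route as the paper: both apply Lemma \ref{lem-ev-norm} to $\p$ with $\heat\p=0$, use Remark \ref{re-circAnd[]} to cancel $(\p,S\ci\p)-(\p,S[\p])$ and to get $(\p,Q[\p])\leq 0$, identify $\nb\p$ with $-T$ via equation \eqref{eq-TorsionDbphi}, and close with the strong maximum principle plus parabolic uniqueness. Your only deviations are cosmetic: you fold the torsion identification into the differential inequality earlier (writing $\heat|\p|^2\leq -|T|^2$) and spell out the strong maximum principle via the auxiliary function $v$, where the paper keeps the three non-positive terms separate and argues their simultaneous vanishing.
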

\begin{proof}
We will use notations defined in subsection \ref{subs-notation}.
By direct computation, we have
\begin{align*}
\dt|\p|^2
&=(\dt\p,\p)+(\p,\dt\p)+(\p,(\dt g)[\p])
\\
&=(\dt\p,\p)+(\p,\dt\p)-(\p,S[\p])+(\p,Q[\p])
\end{align*}
And
\begin{align*}
\D|\p|^2
&=(\D\p,\p)+(\p,\Db\p)+|\n\p|^2+|\nb\p|^2
\\
&=(\D\p,\p)+(\p,\D\p)-(\p,S\ci\p)+|\n\p|^2+|\nb\p|^2
\end{align*}
From Proposition \ref{prop-ev-phi}, we know that $\p$ satisfies the heat equation.
So
\begin{align*}
\heat|\p|^2
&=-(\p,S[\p])+(\p,Q[\p])+(\p,S\ci\p)-|\n\p|^2-|\nb\p|^2
\end{align*}
Noticing that $\p\in T^{2,0}M$, we obtain $S[\p]=S\ci\p$ from Remark \ref{re-circAnd[]}.
Thus 
\begin{align}\label{eq-ev-normPhi}
\heat|\p|^2
&=-|\n\p|^2-|\nb\p|^2+(\p,Q[\p])
\end{align}
Since $Q$ is non-negative, we have $(\p,Q[\p])\leq0$ (see Remark \ref{re-circAnd[]} again).

By the strong maximum principle, we know that either $q(t)=\max\limits_{x\in M}|\p|^2(x,t)$ is strictly monotonically decreasing, or $|\p|^2$ is constant on some time interval $[t_0,t_1]$.
For the latter case,	we have
\begin{align*}
|\n\p|^2=|\nb\p|^2=0
,\quad 
(\p,Q[\p])=0
,\quad
t\in[t_0,t_1]
\end{align*}
This is because those three terms on the right hand side of equation \eqref{eq-ev-normPhi} is non-positive.
Then we get $T=0$, $t\in[t_0,t_1]$ from equation \eqref{eq-TorsionDbphi}.
By the uniqueness of parabolic systems (e.g. see section 4.4 of \cite{MR2265040}), we conclude that $\me(t)$ is a solution to K{\"a}hler Ricci flow.
And we complete this proof.
\end{proof}

As an application, we show that all Hermitian-symplectic solitons are K{\"a}hler Ricci solitons.

\begin{corollary}\label{coro-HSS=KRS}
The (1,1)-part of Hermitian-symplectic solitons are K{\"a}hler Ricci solitons.
\end{corollary}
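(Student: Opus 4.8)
The plan is to realize the soliton as a self-similar solution of the pluriclosed (equivalently, Hermitian-symplectic) flow and then apply the dichotomy of Proposition \ref{thm-monotonicity}. Given a Hermitian-symplectic soliton $H_0=\me_0+\p_0+\ov{\p}_0$ with $\rho(H_0^{1,1})=\lambda H_0+\mathcal{L}_X H_0$, I would first write down the associated solution $H(t)=c(t)\,\psi_t^*H_0$, where $c(t)=1-\lambda t$ and $\psi_t$ is the one-parameter family of biholomorphisms generated by the time-dependent vector field $-X/c(t)$ (the maps $\psi_t$ are biholomorphic because $X$ is holomorphic). Using that the Bismut Ricci form is scale-invariant and natural under biholomorphisms, one checks that $H(t)$ solves $\dt H=-\rho(H^{1,1})$ with $H(0)=H_0$; by bidegree this gives $\me(t)=c(t)\psi_t^*\me_0$ and $\p(t)=c(t)\psi_t^*\p_0$, and the matching of coefficients is exactly what pins down $c'(t)=-\lambda$ together with the generating field $-X/c(t)$.

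The key observation is that the monotone quantity $q(t)=\max_{x\in M}|\p|^2_{g(t)}(x,t)$ is constant along this self-similar solution. Indeed, since $\p$ is a $(2,0)$-form, the two inverse-metric contractions defining $|\p|^2_{g}$ produce a factor $c^{-2}$ under $g\mapsto c\,g$, while the scaling of $\p$ itself contributes $c^2$; these cancel, so rescaling by $c(t)$ leaves the pointwise norm unchanged. Naturality under the biholomorphism $\psi_t$ then gives $|\p(t)|^2_{g(t)}(x)=|\p_0|^2_{g_0}(\psi_t(x))$, and taking the maximum over the compact manifold $M$ yields $q(t)=q(0)$ for every $t$ in the existence interval.

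With $q(t)$ constant it is in particular not strictly monotonically decreasing, so Proposition \ref{thm-monotonicity} forces $\me(t)$ to be a solution of the K\"ahler Ricci flow; in particular $\me_0$ is K\"ahler and $T\equiv 0$. Finally I would substitute $T=0$ back into the soliton equation: for a K\"ahler metric the Bismut Ricci form coincides with the ordinary Ricci form and carries no $(2,0)$-part, so $\rho^{1,1}(\me_0)=\Ric(\me_0)$ and the soliton equation reduces to $\Ric(\me_0)=\lambda\me_0+\mathcal{L}_X\me_0$, which is exactly the K\"ahler Ricci soliton equation.

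I expect the main technical point to be verifying that $H(t)=c(t)\psi_t^*H_0$ genuinely solves the flow, i.e. checking the scale-invariance and naturality of $\rho$ carefully enough to produce $c(t)=1-\lambda t$ and the correct generating field on a positive time interval (for $\lambda>0$ one only needs $t<1/\lambda$). Once this self-similar solution is in hand, the constancy of $q$ and the appeal to Proposition \ref{thm-monotonicity} are short.
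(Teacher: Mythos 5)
Your proposal is correct and follows essentially the same route as the paper: you construct the self-similar solution $H(t)=(1-\lambda t)\psi_t^*H_0$ with $\psi_t$ generated by $-X/(1-\lambda t)$, observe that $|\p|_{g}$ is invariant under simultaneous rescaling and under pullback so that $q(t)$ is constant, and then invoke Proposition \ref{thm-monotonicity}. Your closing paragraph, making explicit that $T\equiv 0$ reduces the soliton equation to $\Ric(\me_0)=\lambda\me_0+\mathcal{L}_X\me_0$, is a small additional clarification that the paper leaves implicit.
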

\begin{proof}
Let $H_0=\me_0+\p_0+\ov{\p}_0$ be a Hermitian-symplectic soliton satisfying
\begin{align*}
\rho(H_0^{1,1})=\lambda H_0+\mathcal{L}_{X}H_0
\end{align*}
where $\lambda\in\mathbb{R}$ and $X$ is a holomorphic vector field.
Assume $f_t$ is the flow generated by the time-dependent vector field $(\lambda t-1)^{-1}X$.
And we claim that $H(t)=(1-\lambda t)f^{*}_{t}(H_0)$ is a solution to pluriclosed flow with initial data $H_0$.
To see this,
\begin{align*}
\dt H(t)
&=	-\lambda f^{*}_{t}(H_0)
	+(1-\lambda t)f^{*}_{t}(\mathcal{L}_{(\lambda t-1)^{-1}X}H_0)
\\
&=-f^{*}_{t}(\lambda H_0+\mathcal{L}_{X}H_0)
\\
&=-f^{*}_{t}(\rho(H_0^{1,1}))
\\
&=-\rho(f^{*}_{t}(H_0)^{1,1})
\\
&=-\rho(H(t)^{1,1})
\end{align*}
For any positive constant $a$, we have
\begin{align*}
|a\p|_{a\me}(x,t)=|\p|_{\me}(x,t).
\end{align*}
Since $|\p|(x,t)$ is invariant under rescaling, we obtain that
\begin{align*}
\max_{x\in M}|\p|_{g(t)}(x,t)
=\max_{x\in M}|\p|_{g(0)}(x,0)
\end{align*}
for the solution $H(t)$.
So we conclude that $\me(t)$ is a solution to K{\"a}hler Ricci flow by Proposition \ref{thm-monotonicity}.
In particular, we obtain that $\me_0$ is a K{\"a}hler Ricci soliton.
\end{proof}

\section{Appendix}
In this appendix, we show the detail of the calculation of evolution equations.
\begin{proof}[Proof of Lemma \ref{lem-EvCurvature}.]
Using Lemma \ref{lem-EvChristoffel}, we have
\begin{align*}
\dt\C_{i\bar{j}p\bar{q}}
&=-\dt(\dd_{\bar{j}}\Gamma_{ip}^{a}g_{a\bar{q}})
=-\dd_{\bar{j}}\dt\Gamma_{ip}^{a}g_{a\bar{q}}
	-\dd_{\bar{j}}\Gamma_{ip}^{a}\dt g_{a\bar{q}}
\\
&=\n_{\bar{j}}\n_{i}S_{p\bar{q}}-\n_{\bar{j}}\n_{i}Q_{p\bar{q}}
	+\C_{i\bar{j}p\bar{a}}(-S_{a\bar{q}}+Q_{a\bar{q}})
\end{align*}

{\bf Step 1.} Compute $\n_{\bar{j}}\n_{i}S_{p\bar{q}}$.
Applying the second Bianchi identity, we get
\begin{align*}
\n_{\bar{j}}\n_{i}S_{p\bar{q}}
&=\n_{\bar{j}}\n_{i}\C_{a\bar{a}p\bar{q}}
=\n_{\bar{j}}(\n_{a}\C_{i\bar{a}p\bar{q}}
	+T_{ai}^{b}\C_{b\bar{a}p\bar{q}})
\\
&=\n_{a}\n_{\bar{j}}\C_{i\bar{a}p\bar{q}}
	+\C_{a\bar{j}i\bar{b}}\C_{b\bar{a}p\bar{q}}
	-\C_{a\bar{j}b\bar{a}}\C_{i\bar{b}p\bar{q}}
	+\C_{a\bar{j}p\bar{b}}\C_{i\bar{a}b\bar{q}}
	-\C_{a\bar{j}b\bar{q}}\C_{i\bar{a}p\bar{b}}
	\q
	+\n_{\bar{j}}T_{ai}^{b}\C_{b\bar{a}p\bar{q}}
	+T_{ai}^{b}\n_{\bar{j}}\C_{b\bar{a}p\bar{q}}
\end{align*}
Using the second Bianchi identity again, we have
\begin{align*}
\n_{a}\n_{\bar{j}}\C_{i\bar{a}p\bar{q}}
&=\n_{a}(\n_{\bar{a}}\C_{i\bar{j}p\bar{q}}
	+T_{\bar{a}\bar{j}}^{\bar{b}}\C_{i\bar{b}p\bar{q}})
\\
&=\n_{a}\n_{\bar{a}}\C_{i\bar{j}p\bar{q}}
	+\n_{a}T_{\bar{a}\bar{j}}^{\bar{b}}\C_{i\bar{b}p\bar{q}}
	+T_{\bar{a}\bar{j}}^{\bar{b}}\n_{a}\C_{i\bar{b}p\bar{q}}
\end{align*}
Recall the first Bianchi identity 
$\n_{\bar{j}}T_{ip\bar{q}}=\C_{p\bar{j}i\bar{q}}-\C_{i\bar{j}p\bar{q}}$.
Then we get
\begin{align*}
\n_{\bar{j}}\n_{i}S_{p\bar{q}}
&=\n_{a}\n_{\bar{a}}\C_{i\bar{j}p\bar{q}}
	+T_{ai}^{b}\n_{\bar{j}}\C_{b\bar{a}p\bar{q}}	
	+T_{\bar{a}\bar{j}}^{\bar{b}}\n_{a}\C_{i\bar{b}p\bar{q}}	
	\q
	+\C_{a\bar{j}i\bar{b}}\C_{b\bar{a}p\bar{q}}
	-\C_{a\bar{j}b\bar{a}}\C_{i\bar{b}p\bar{q}}
	+\C_{a\bar{j}p\bar{b}}\C_{i\bar{a}b\bar{q}}
	-\C_{a\bar{j}b\bar{q}}\C_{i\bar{a}p\bar{b}}
	\q
	+\C_{i\bar{j}a\bar{b}}\C_{b\bar{a}p\bar{q}}
	-\C_{a\bar{j}i\bar{b}}\C_{b\bar{a}p\bar{q}}
	+\C_{a\bar{j}b\bar{a}}\C_{i\bar{b}p\bar{q}}
	-\C_{a\bar{a}b\bar{j}}\C_{i\bar{b}p\bar{q}}
\end{align*}
{\bf Step 2.} Compute $\n_{\bar{j}}\n_{i}Q_{p\bar{q}}$.
\begin{align*}
\n_{\bar{j}}\n_{i}Q_{p\bar{q}}
&=\n_{\bar{j}}\n_{i}(T_{pa\bar{b}}T_{\bar{q}\bar{a}b})
\\
&=\n_{\bar{j}}\n_{i}T_{pa\bar{b}}T_{\bar{q}\bar{a}b}
	+T_{pa\bar{b}}\n_{\bar{j}}\n_{i}T_{\bar{q}\bar{a}b}
	+\n_{\bar{j}}T_{pa\bar{b}}\n_{i}T_{\bar{q}\bar{a}b}
	+\n_{i}T_{pa\bar{b}}\n_{\bar{j}}T_{\bar{q}\bar{a}b}
\end{align*}
The first term is
\begin{align*}
\n_{\bar{j}}\n_{i}T_{pa\bar{b}}T_{\bar{q}\bar{a}b}
&=\n_{i}\n_{\bar{j}}T_{pa\bar{b}}T_{\bar{q}\bar{a}b}
	+\C_{i\bar{j}p\bar{c}}T_{ca\bar{b}}T_{\bar{q}\bar{a}b}
	+\C_{i\bar{j}a\bar{c}}T_{pc\bar{b}}T_{\bar{q}\bar{a}b}
	-\C_{i\bar{j}c\bar{b}}T_{pa\bar{c}}T_{\bar{q}\bar{a}b}
\\
&=\n_{i}\C_{a\bar{j}p\bar{b}}T_{\bar{q}\bar{a}b}
	-\n_{i}\C_{p\bar{j}a\bar{b}}T_{\bar{q}\bar{a}b}
	\q
	+\C_{i\bar{j}p\bar{c}}T_{ca\bar{b}}T_{\bar{q}\bar{a}b}
	+\C_{i\bar{j}a\bar{c}}T_{pc\bar{b}}T_{\bar{q}\bar{a}b}
	-\C_{i\bar{j}c\bar{b}}T_{pa\bar{c}}T_{\bar{q}\bar{a}b}
\end{align*}
Thus from Bianchi identity, we have
\begin{align*}
\n_{\bar{j}}\n_{i}Q_{p\bar{q}}
&=\n_{i}\C_{a\bar{j}p\bar{b}}T_{\bar{q}\bar{a}b}
	-\n_{i}\C_{p\bar{j}a\bar{b}}T_{\bar{q}\bar{a}b}
	+\n_{\bar{j}}\C_{i\bar{a}b\bar{q}}T_{pa\bar{b}}
	-\n_{\bar{j}}\C_{i\bar{q}b\bar{a}}T_{pa\bar{b}}
	\q	
	+\C_{a\bar{j}p\bar{b}}\C_{i\bar{a}b\bar{q}}
	-\C_{p\bar{j}a\bar{b}}\C_{i\bar{a}b\bar{q}}
	-\C_{a\bar{j}p\bar{b}}\C_{i\bar{q}b\bar{a}}
	+\C_{p\bar{j}a\bar{b}}\C_{i\bar{q}b\bar{a}}
	\q
	+\C_{i\bar{j}p\bar{c}}T_{ca\bar{b}}T_{\bar{q}\bar{a}b}
	+\C_{i\bar{j}a\bar{c}}T_{pc\bar{b}}T_{\bar{q}\bar{a}b}
	-\C_{i\bar{j}c\bar{b}}T_{pa\bar{c}}T_{\bar{q}\bar{a}b}
	+\n_{i}T_{pa\bar{b}}\n_{\bar{j}}T_{\bar{q}\bar{a}b}
\end{align*}
{\bf Step 3.} Combining above together, we obtain
\begin{align*}
\heat\C_{i\bar{j}p\bar{q}}
&=\n_{a}\C_{i\bar{b}p\bar{q}}	T_{\bar{a}\bar{j}b}
	-\n_{i}\C_{a\bar{j}p\bar{b}}T_{\bar{q}\bar{a}b}
	+\n_{i}\C_{p\bar{j}a\bar{b}}T_{\bar{q}\bar{a}b}
	+\n_{\bar{j}}\C_{b\bar{a}p\bar{q}}T_{ai\bar{b}}	
	-\n_{\bar{j}}\C_{i\bar{a}b\bar{q}}T_{pa\bar{b}}
	\q
	+\n_{\bar{j}}\C_{i\bar{q}b\bar{a}}T_{pa\bar{b}}
	+\C_{a\bar{j}i\bar{b}}\C_{b\bar{a}p\bar{q}}
	-\C_{a\bar{j}b\bar{a}}\C_{i\bar{b}p\bar{q}}
	+\C_{a\bar{j}p\bar{b}}\C_{i\bar{a}b\bar{q}}
	-\C_{a\bar{j}b\bar{q}}\C_{i\bar{a}p\bar{b}}
	\q
	+\C_{i\bar{j}a\bar{b}}\C_{b\bar{a}p\bar{q}}
	-\C_{a\bar{j}i\bar{b}}\C_{b\bar{a}p\bar{q}}
	+\C_{a\bar{j}b\bar{a}}\C_{i\bar{b}p\bar{q}}
	-\C_{a\bar{a}b\bar{j}}\C_{i\bar{b}p\bar{q}}
	-\C_{a\bar{j}p\bar{b}}\C_{i\bar{a}b\bar{q}}
	\q
	+\C_{p\bar{j}a\bar{b}}\C_{i\bar{a}b\bar{q}}
	+\C_{a\bar{j}p\bar{b}}\C_{i\bar{q}b\bar{a}}
	-\C_{p\bar{j}a\bar{b}}\C_{i\bar{q}b\bar{a}}	
	-\C_{i\bar{j}p\bar{a}}S_{a\bar{q}}
	-\C_{i\bar{j}p\bar{c}}T_{ca\bar{b}}T_{\bar{q}\bar{a}b}
	\q
	-\C_{i\bar{j}a\bar{c}}T_{pc\bar{b}}T_{\bar{q}\bar{a}b}
	+\C_{i\bar{j}c\bar{b}}T_{pa\bar{c}}T_{\bar{q}\bar{a}b}
	+\C_{i\bar{j}p\bar{a}}Q_{a\bar{q}}
	-\n_{i}T_{pa\bar{b}}\n_{\bar{j}}T_{\bar{q}\bar{a}b}
\end{align*}
Briefly,
\begin{align*}
\heat\C
&=\n\C*\Tb+\nb\C*T+\C*\C+\C*T*\Tb+\n T*\ov{\n T}
\end{align*}
\end{proof}

\begin{proof}[Proof of Lemma \ref{lem-EvTorsion}.]
Using Lemma \ref{lem-EvChristoffel}, we have
\begin{align*}
\dt T_{ij\bar{k}}
&=\dt (\Ga_{ij}^{a}g_{a\bar{k}}-\Ga_{ji}^{a}g_{a\bar{k}})
=\dt(\Ga_{ij}^{a}-\Ga_{ji}^{a})g_{a\bar{k}}
	+(\Ga_{ij}^{a}-\Ga_{ji}^{a})\dt g_{a\bar{k}}
\\
&=-\n_{i}S_{j\bar{k}}+\n_{j}S_{i\bar{k}}
	+\n_{i}Q_{j\bar{k}}-\n_{j}Q_{i\bar{k}}
	+T_{ij\bar{a}}(-S_{a\bar{k}}+Q_{a\bar{k}})
\end{align*}
{\bf Step 1.} Compute $-\n_{i}S_{j\bar{k}}+\n_{j}S_{i\bar{k}}$.
Using Bianchi identity, we have
\begin{align*}
-\n_{i}S_{j\bar{k}}+\n_{j}S_{i\bar{k}}
&=-\n_{i}\C_{a\bar{a}j\bar{k}}+\n_{j}\C_{a\bar{a}i\bar{k}}
\\
&=-\n_{a}\C_{i\bar{a}j\bar{k}}-T_{ai}^{b}\C_{b\bar{a}j\bar{k}}
	+\n_{a}\C_{j\bar{a}i\bar{k}}+T_{aj}^{b}\C_{b\bar{a}i\bar{k}}
\\
&=\n_{a}(\C_{j\bar{a}i\bar{k}}-\C_{i\bar{a}j\bar{k}})
	-T_{ai}^{b}\C_{b\bar{a}j\bar{k}}
	+T_{aj}^{b}\C_{b\bar{a}i\bar{k}}
\\
&=\n_{a}\n_{\bar{a}}T_{ij\bar{k}}
	-T_{ai}^{b}\C_{b\bar{a}j\bar{k}}
	+T_{aj}^{b}\C_{b\bar{a}i\bar{k}}
\end{align*}
{\bf Step 2.} Compute $\n_{i}Q_{j\bar{k}}-\n_{j}Q_{i\bar{k}}$.
\begin{align*}
\n_{i}Q_{j\bar{k}}
&=\n_{i}(T_{ja\bar{b}}T_{\bar{k}\bar{a}b})
=\n_{i}T_{ja\bar{b}}T_{\bar{k}\bar{a}b}
	+T_{ja\bar{b}}\n_{i}T_{\bar{k}\bar{a}b}
\\
&=\n_{i}T_{ja\bar{b}}T_{\bar{k}\bar{a}b}
	+\C_{i\bar{a}b\bar{k}}T_{ja\bar{b}}
	-\C_{i\bar{k}b\bar{a}}T_{ja\bar{b}}
\end{align*}
So we have
\begin{align*}
\n_{i}Q_{j\bar{k}}-\n_{j}Q_{i\bar{k}}
&=\C_{i\bar{a}b\bar{k}}T_{ja\bar{b}}
	-\C_{i\bar{k}b\bar{a}}T_{ja\bar{b}}
	-\C_{j\bar{a}b\bar{k}}T_{ia\bar{b}}
	+\C_{j\bar{k}b\bar{a}}T_{ia\bar{b}}
	\q
	+\n_{i}T_{ja\bar{b}}T_{\bar{k}\bar{a}b}
	-\n_{j}T_{ia\bar{b}}T_{\bar{k}\bar{a}b}
\end{align*}
{\bf Step 3.} Combining above together, we obtain
\begin{align*}
\heat T_{ij\bar{k}}
&=\C_{i\bar{a}b\bar{k}}T_{ja\bar{b}}
	-\C_{i\bar{k}b\bar{a}}T_{ja\bar{b}}
	-\C_{j\bar{a}b\bar{k}}T_{ia\bar{b}}
	+\C_{j\bar{k}b\bar{a}}T_{ia\bar{b}}
	+\C_{b\bar{a}i\bar{k}}T_{aj\bar{b}}
	-\C_{b\bar{a}j\bar{k}}T_{ai\bar{b}}
	\q
	-S_{a\bar{k}}T_{ij\bar{a}}
	+\n_{i}T_{ja\bar{b}}T_{\bar{k}\bar{a}b}
	-\n_{j}T_{ia\bar{b}}T_{\bar{k}\bar{a}b}
	+T_{ij\bar{a}}Q_{a\bar{k}}
\\
&=(\C_{i\bar{a}b\bar{k}}
		-\C_{i\bar{k}b\bar{a}}-\C_{b\bar{a}i\bar{k}})T_{ja\bar{b}}
	+(\C_{j\bar{k}b\bar{a}}
		+\C_{b\bar{a}j\bar{k}}-\C_{j\bar{a}b\bar{k}})T_{ia\bar{b}}
	\q
	-S_{a\bar{k}}T_{ij\bar{a}}
	+\n_{i}T_{ja\bar{b}}T_{\bar{k}\bar{a}b}
	-\n_{j}T_{ia\bar{b}}T_{\bar{k}\bar{a}b}
	+T_{ij\bar{a}}Q_{a\bar{k}}
\\
&=(-\C_{b\bar{k}i\bar{a}}
	+T_{ib\bar{c}}T_{\bar{a}\bar{k}c})T_{ja\bar{b}}
	+(\C_{b\bar{k}j\bar{a}}
	+T_{jb\bar{c}}T_{\bar{k}\bar{a}c})T_{ia\bar{b}}
	\q
	-S_{a\bar{k}}T_{ij\bar{a}}
	+\n_{i}T_{ja\bar{b}}T_{\bar{k}\bar{a}b}
	-\n_{j}T_{ia\bar{b}}T_{\bar{k}\bar{a}b}
	+T_{ij\bar{a}}Q_{a\bar{k}}
\end{align*}
We use the pluriclosed condition (Lemma \ref{lem-PCcondition}) in the last equality.
So we have
\begin{align*}
\heat T_{ij\bar{k}}
&=-\C_{b\bar{k}i\bar{a}}T_{ja\bar{b}}
	+\C_{b\bar{k}j\bar{a}}T_{ia\bar{b}}
	+T_{ib\bar{c}}T_{\bar{a}\bar{k}c}T_{ja\bar{b}}
	-T_{jb\bar{c}}T_{\bar{a}\bar{k}c}T_{ia\bar{b}}
	\q
	-S_{a\bar{k}}T_{ij\bar{a}}
	+\n_{i}T_{ja\bar{b}}T_{\bar{k}\bar{a}b}
	-\n_{j}T_{ia\bar{b}}T_{\bar{k}\bar{a}b}
	+T_{ij\bar{a}}Q_{a\bar{k}}
\end{align*}
\end{proof}

\begin{proof}[Proof of Lemma \ref{lem-Ev-higherDerivativeCurvature}.]
Our proof is by induction on $m$.
From Lemma \ref{lem-EvCurvature}, we know that the evolution equation of curvature is
\begin{align*}
\heat\C
&=D\C*T+\C^{*2}+\C*T^{*2}+\n T*\ov{\n T}
\end{align*}
So the case of $m=0$ has been checked.

Combining Lemma \ref{lem-exdtAndconnection} and Lemma \ref{lem-ex-LaplaceAndconnection}, we get
\begin{align}\label{eq-ex-heatD}
[\heat,D]A=(D\C+DT*T)*A+\C*DA+T*D^2A
\end{align}
By induction hypothesis, we have
\begin{align*}
\heat D^{m+1}\C
&=D\heat D^{m}\C
	+(D\C+DT*T)*D^{m}\C+\C*D^{m+1}\C+T*D^{m+2}\C
\\
&=D(\sum_{k=1}^{m+1}D^{k}\C*D^{m+1-k}T)
	+T*D^{m+2}\C
	\q
	+D^{m+1}\left(\C^{*2}+\C*T^{*2}\right)
	+(D\C+DT*T)*D^{m}\C+\C*D^{m+1}\C
	\q
	+D(\sum_{k=1}^{m+1}\n^{k}T*\nb^{m+2-k}\Tb)
\\
&=\sum_{k=1}^{m+2}D^{k}\C*D^{m+2-k}T
	+D^{m+1}\left(\C^{*2}+\C*T^{*2}\right)
	+D(\sum_{k=1}^{m+1}\n^{k}T*\nb^{m+2-k}\Tb)
\end{align*}
Next we address the last term.
\begin{align*}
D(\sum_{k=1}^{m+1}\n^{k}T*\nb^{m+2-k}\Tb)
&=\sum_{k=1}^{m+1}\n^{k+1}T*\nb^{m+2-k}\Tb
	+\sum_{k=1}^{m+1}\n^{k}T*\nb^{m+3-k}\Tb
	\q
	+\sum_{k=1}^{m+1}\nb\n^{k}T*\nb^{m+2-k}\Tb
	+\sum_{k=1}^{m+1}\n^{k}T*\n\nb^{m+2-k}\Tb
\\
&=\sum_{k=1}^{m+2}\n^{k}T*\nb^{m+3-k}\Tb
	+\Big\{\sum_{k=1}^{m+1}(\n^{k}\C+\n^{k-1}(\C*T))*\nb^{m+2-k}\Tb
	\q
	+\sum_{k=1}^{m+1}\n^{k}T*\ov{(\n^{m+2-k}\C+\n^{m+1-k}(\C*T))}
	\Big\}
\end{align*}
The second equal sign is because of equation \eqref{eq-nbnmTorsion}.
Combining above together and noticing that terms in the curly bracket are included in $D^{m+1}\left(\C^{*2}+\C*T^{*2}\right)$, we obtain
\begin{align*}
\heat D^{m+1}\C
&=\sum_{k=1}^{m+2}D^{k}\C*D^{m+2-k}T
	+D^{m+1}\left(\C^{*2}+\C*T^{*2}\right)
	+\sum_{k=1}^{m+2}\n^{k}T*\nb^{m+3-k}\Tb
\end{align*}
And we complete our proof.
\end{proof}

\begin{proof}[Proof of Lemma \ref{lem-ev-highOrderDerivativeTorsion}.]
Our proof is by induction on $m$.
Recall the evolution equation of torsion (Lemma \ref{lem-EvTorsion})
\begin{align*}
\heat T
&=\C*T+\n T*\Tb+T*T*\Tb
\end{align*}
So the case of $m=0$ has been proved.
Combining Lemma \ref{lem-exdtAndconnection} and Lemma \ref{lem-ex-LaplaceAndconnection}, we obtain
\begin{align*}
[\heat,\n]A=(\n\C+\C*T+\n T*\Tb)*A+\C*\n A+T*\n\nb T
\end{align*}
Using the induction hypothesis, we get
\begin{align*}
\heat\n^{m+1}T
&=\n\heat\n^{m}T+\C*\n^{m+1}T+T*\n\nb \n^{m}T
	\q
	+(\n\C+\C*T+\n T*\Tb)*\n^{m}T
\\
&=\n^{m+1}(\C*T+T^{*2}*\Tb)+\n^{m+2}(T*\Tb)
	+\Big\{T*\n^{m+1}\C+T*\n^{m}(\C*T)
	\q
	+(\n\C+\C*T+\n T*\Tb)*\n^{m}T
	\Big\}
\\
&=\n^{m+1}(\C*T+T^{*2}*\Tb)+\n^{m+2}(T*\Tb)
\end{align*}
The third line is because of formula \eqref{eq-nbnmTorsion}.
And terms in the curly bracket are included in the first term in the last line.
This is because of
\begin{align*}
\n^{m+1}(T^{*2}*\Tb)
&=\n^{m}(T^{*2}*\C)+\n^{m}(\n T*T*\Tb)
\end{align*}
So we complete our proof.
\end{proof}

\bibliographystyle{plain}
\bibliography{tehss}

\begin{thebibliography}{10}

\bibitem{MR2030225}
Wolf~P. Barth, Klaus Hulek, Chris A.~M. Peters, and Antonius Van~de Ven.
\newblock {\em Compact complex surfaces}, volume~4 of {\em Ergebnisse der
  Mathematik und ihrer Grenzgebiete. 3. Folge. A Series of Modern Surveys in
  Mathematics [Results in Mathematics and Related Areas. 3rd Series. A Series
  of Modern Surveys in Mathematics]}.
\newblock Springer-Verlag, Berlin, second edition, 2004.

\bibitem{MR2061425}
Bennett Chow and Dan Knopf.
\newblock {\em The {R}icci flow: an introduction}, volume 110 of {\em
  Mathematical Surveys and Monographs}.
\newblock American Mathematical Society, Providence, RI, 2004.

\bibitem{MR2274812}
Bennett Chow, Peng Lu, and Lei Ni.
\newblock {\em Hamilton's {R}icci flow}, volume~77 of {\em Graduate Studies in
  Mathematics}.
\newblock American Mathematical Society, Providence, RI; Science Press Beijing,
  New York, 2006.

\bibitem{MR3319954}
Nicola Enrietti, Anna Fino, and Luigi Vezzoni.
\newblock The pluriclosed flow on nilmanifolds and tamed symplectic forms.
\newblock {\em J. Geom. Anal.}, 25(2):883--909, 2015.

\bibitem{2021arXiv210613716G}
Mario {Garcia-Fernandez}, Joshua {Jordan}, and Jeffrey {Streets}.
\newblock {Non-K{\"a}hler Calabi-Yau geometry and pluriclosed flow}.
\newblock {\em arXiv e-prints}, page arXiv:2106.13716, June 2021.

\bibitem{2002math.....11159P}
Grisha {Perelman}.
\newblock {The entropy formula for the Ricci flow and its geometric
  applications}.
\newblock {\em arXiv Mathematics e-prints}, page math/0211159, November 2002.

\bibitem{2003math......7245P}
Grisha {Perelman}.
\newblock {Finite extinction time for the solutions to the Ricci flow on
  certain three-manifolds}.
\newblock {\em arXiv Mathematics e-prints}, page math/0307245, July 2003.

\bibitem{2003math......3109P}
Grisha {Perelman}.
\newblock {Ricci flow with surgery on three-manifolds}.
\newblock {\em arXiv Mathematics e-prints}, page math/0303109, March 2003.

\bibitem{2015arXiv150202584S}
Jeffrey {Streets}.
\newblock {Pluriclosed flow, Born-Infeld geometry, and rigidity results for
  generalized K{\"a}hler manifolds}.
\newblock {\em arXiv e-prints}, page arXiv:1502.02584, February 2015.

\bibitem{MR4023384}
Jeffrey Streets.
\newblock Classification of solitons for pluriclosed flow on complex surfaces.
\newblock {\em Math. Ann.}, 375(3-4):1555--1595, 2019.

\bibitem{MR4181011}
Jeffrey Streets.
\newblock Pluriclosed flow and the geometrization of complex surfaces.
\newblock In {\em Geometric analysis---in honor of {G}ang {T}ian's 60th
  birthday}, volume 333 of {\em Progr. Math.}, pages 471--510.
  Birkh\"{a}user/Springer, Cham, [2020] \copyright 2020.

\bibitem{MR2673720}
Jeffrey Streets and Gang Tian.
\newblock A parabolic flow of pluriclosed metrics.
\newblock {\em Int. Math. Res. Not. IMRN}, (16):3101--3133, 2010.

\bibitem{MR2781927}
Jeffrey Streets and Gang Tian.
\newblock Hermitian curvature flow.
\newblock {\em J. Eur. Math. Soc. (JEMS)}, 13(3):601--634, 2011.

\bibitem{MR2755684}
Jeffrey Streets and Gang Tian.
\newblock Regularity theory for pluriclosed flow.
\newblock {\em C. R. Math. Acad. Sci. Paris}, 349(1-2):1--4, 2011.

\bibitem{MR4287690}
Jeffrey Streets and Yury Ustinovskiy.
\newblock Classification of generalized {K}\"{a}hler-{R}icci solitons on
  complex surfaces.
\newblock {\em Comm. Pure Appl. Math.}, 74(9):1896--1914, 2021.

\bibitem{MR2265040}
Peter Topping.
\newblock {\em Lectures on the {R}icci flow}, volume 325 of {\em London
  Mathematical Society Lecture Note Series}.
\newblock Cambridge University Press, Cambridge, 2006.

\bibitem{2022arXiv221204060Y}
Yanan {Ye}.
\newblock {Bismut Einstein metrics on compact complex manifolds}.
\newblock {\em arXiv e-prints}, page arXiv:2212.04060, December 2022.

\bibitem{2022arXiv220712643Y}
Yanan {Ye}.
\newblock {Pluriclosed flow and Hermitian-symplectic structures}.
\newblock {\em arXiv e-prints}, page arXiv:2207.12643, July 2022.

\end{thebibliography}
\end{document}